\definecolor{purple}{rgb}{0.6,0,0.8}
\newcommand{\plim}[1]{\underset{#1\to\infty}{\textup{p-lim}}}%probabilistic limits
\newcommand{\T}{\intercal} % transpose symbol
\newcommand{\delAttack}{\hat{\delta}} % attack portion of observer error
\newcommand{\AL}{\underline{A}}
\newcommand{\AK}{\bar{A}}
\newtheorem{theorem}{Theorem}[section]
\newtheorem{cor}[theorem]{Corollary}
\newtheorem{asmp}[theorem]{Assumption}
\newtheorem{lm}[theorem]{Lemma}
\newtheorem{df}[theorem]{Definition}
\newcommand{\remove}[1]{}
\newcommand{\add}[1]{#1}
\begin{document}
\title{Detecting Generalized Replay Attacks\\ via Time-Varying Dynamic Watermarking}

%Author and first page footnotes
\author{Matthew~Porter,~Pedro~Hespanhol,~Anil~Aswani,~Matthew~Johnson-Roberson,~and~Ram~Vasudevan
\thanks{This work was supported by a grant from Ford Motor Company via the Ford-UM Alliance under award N022977 and by UC Berkeley Center for Long-Term Cybersecurity.}%
\thanks{M. Porter and R. Vasudevan are with the Department of Mechanical Engineering, University of Michigan, Ann Arbor, MI 48103 USA (e-mail:~matthepo@umich.edu;~ramv@umich.edu).}%
\thanks{A. Aswani and P. Hespanhol are with the Department of Industrial Engineering and Operations Research, University of California Berkeley, Berkeley, CA 94720 USA (e-mail:~aaswani@berkeley.edu;~pedrohespanhol@berkeley.edu).}%
\thanks{M. Johnson-Roberson is with the Department of Naval Architecture, University of Michigan, Ann Arbor, MI 48103 USA (e-mail:~mattjr@umich.edu).}%
}

\maketitle

\begin{abstract}
Cyber-physical systems (CPS) often rely on external communication for supervisory control or sensing.
Unfortunately, these communications render the system vulnerable to cyber-attacks.
Attacks that alter messages, such as replay attacks that record measurement signals and then play them back to the system, can cause devastating effects.
Dynamic Watermarking methods, which inject a private excitation into control inputs to secure resulting measurement signals, have begun addressing the challenges of detecting these attacks, but have been restricted to linear time invariant (LTI) systems.
Though LTI models are sufficient for some applications, other CPS, such as autonomous vehicles, require more complex models.
This paper develops a linear time-varying (LTV) extension to previous Dynamic Watermarking methods by designing a matrix normalization factor to accommodate the temporal changes in the system.
Implementable tests are provided with considerations for real-world systems.
The proposed method is then shown to be able to detect generalized replay attacks both in theory and in simulation using a LTV vehicle model.
\end{abstract}
\section{Introduction}
Cyber-physical systems (CPS) combine both networked computing and sensing resources with physical control systems in an effort to increase efficiency, manage complexity, or provide convenience.
Whether it is industrial control applications or smart devices, CPS require secure networked communications to operate safely and correctly.
Malicious attacks on such systems can cause devastating results \cite{Langner2011,Abrams2008,lee2014german,Lee2016AnalysisGrid}.
CPS are often protected by traditional cyber security tools, but these methods are insufficient due to the addition of networked physical infrastructure. \cite{Sandberg2015,Cardenas2008}.
A growing body of work has started to address these challenges by developing new detection algorithms, analyzing potentially stealthy attack models, and finding ways of reducing the effect of attacks.
One particular detection method, Dynamic Watermarking, has been shown to detect various attack models while making few assumptions about system structure.
Despite these developments, detection algorithms, including Dynamic Watermarking, have only focused on CPS that can be modeled as linear time invariant (LTI) systems.
While LTI models can be sufficient for steady state or slow moving applications, many emerging CPS such as autonomous vehicles require models that change over time.
This paper develops methods to accommodate such CPS by extending Dynamic Watermarking to linear time-varying (LTV) systems.

\subsection{Attack Models}
Attacks are divided into three categories: \emph{denial of service} (DOS) \emph{attacks}, where the control or measurement signal is stopped,  \emph{direct attacks}, where the plant, actuators or sensors are physically attacked, and \emph{deception attacks}, where the control or measurement signal are altered. \cite{Cardenas2008survive}.
DOS attacks can be detrimental, but are trivial to detect if they stop all communication.
Furthermore, when only a portion of communication is stopped, their effects can be minimized using graceful degradation \cite{Saurabh2009dos}.
The result of direct attacks often causes anomalies in the measurement signal and can therefore be detected by methods used to detect deception attacks.
Consequentially, this work focuses on the detection of deception attacks.

% Deception attacks, also called false data injection or integrity attacks, are of particular interest due to their usage in real world attacks \cite{Langner2011}\mattp{need to read through the other attacks and add possibly add them to the citation here}.
% Though the nomenclature used in literature to describe specific deception attack strategies has been somewhat conflicting, several generalized themes have emerged; 
A variety of deception attacks have been proposed. 
The simplest deception attacks add noise using arbitrary or random strategies \cite{Liu2011}.
On the other hand, bias injection attacks, the attacker injects a constant bias into the system \cite{Teixeira2012models}, while routing attacks send measurement signals through a linear transform \cite{Ferrari2017isolation}.
Other deception attacks attempt to decouple the system such that the measurements are unaltered while certain states of the system are attacked \cite{smith2011}.
For instance, zero-dynamics attacks take advantage of un-observable states or remove the effects of their attacks in the resulting measurement signal \cite{Teixeira2012models}, and replay attacks involve an attacker replaying recorded measurements while possibly altering control as well \cite{Teixeira2012models}.

The amount of knowledge of the system dynamics and detection scheme along with the capability of the attacker to alter certain signals necessary to carry out these attacks varies greatly.
While random, bias injection, routing, and replay attacks do not require any knowledge of the underlying system dynamics, decoupling and zero-dynamics attack require almost full knowledge. 
% \pedro{I think replay attacks do require the knowledge of system dynamics}
% \mattp{While a manufactured signal like we have been doing requires knowledge of the system, using a recorded measurement does not.}
This knowledge can be difficult to obtain for non-insider attackers but it is not impossible \cite{yuan2015,Umsonst2019}.
Nonetheless, this work focuses on a generalization of a replay attack due to the simplicity of implementation and because it has already been applied during real-world attacks \cite{Langner2011}.
Furthermore, we consider attacks that only alter measurement signals, since many of the systems we care about use local controllers while operating using externally received measurements.

\subsection{Attack Detection Algorithms}
The \emph{measurement residual}, defined as the difference between the measurement and the expected measurement, is used by most detection schemes.
For each detector, a metric based on the measurement residual is generated.
If at any time the metric exceeds a user-defined threshold, the detector raises an alarm.
Generally, these metrics can be separated into two categories: those that only observe the system, called \emph{passive methods}, and those that alter the system while observing, called \emph{active methods}.
While passive methods do not degrade control performance, active methods accept a small amount of performance degradation in exchange for the ability to detect more complex attacks  \cite{Porter2019,Weerakkody2017exposing,weerakkody2016info}.
These categories can be further subdivided into \emph{stateless} metrics, which only consider the current measurement residual, and \emph{stateful} metrics, which rely on previous measurement residuals as well. 

\subsubsection{Passive Methods}
The $\chi^2$ detector's metric is the inner product of the normalized measurement residual, which follows a $\chi^2$ distribution. 
Due to its simplicity, the  $\chi^2$ detector has been studied in several works \cite{Mo2010,Mo2012,Kwon2013,hashemi2019}.
Though the $\chi^2$ is widely used, it is a stateless detector.
Two stateful alternatives are the cumulative sum (CUSUM) detector and the multivariate exponentially weighted moving average (MEWMA) detector.
When comparing these stateful detectors to the $\chi^2$ detector, it has been shown that the stateful detectors can often provide stronger guarantees on detection while the $\chi^2$ detector boasts both simpler implementation and generally takes less time to detect attacks\cite{Murguia2016CUSUMSensors,Umsonst2018}.
While passive detectors can detect random attacks, they are unable to detect more sophisticated attacks such as replay attacks.
In addition, they have only been developed for LTI systems.

\subsubsection{Active Methods}
Most active methods fall into one of two categories: \emph{moving target defense}, which change system parameters to keep attackers from obtaining the current configuration, and \emph{watermarking-based methods}, which encrypt measurement signals with a watermark that is added to the control input.

The concept of moving target defenses is a topic of continued interest for the field of cyber security and includes randomizing the order of code execution and physical memory storage locations \cite{jajodia2011moving}. 
In CPS, moving target defense can take the form of switching between redundant measurements \cite{Teixeira2012revealing,Rahman2014MovingTD,Tian2017hidden,Giraldo2019,Kanellopoulos2019}, altering control strategy \cite{Teixeira2012revealing,Kanellopoulos2019}, or by changing plant dynamics \cite{Teixeira2012revealing,Rahman2014MovingTD,Tian2017hidden,Weerakkody2015moving,Schellenberger2017auxiliary,Ghaderi2019,Griffioen2019}. 
Switching measurement signals works well when an attacker is only hacking a few measurements, but otherwise performs similar to passive methods.
Altering the control strategy is arguably similar to watermarking-based methods and can allow for detection of most attack models except zero-dynamics attacks.
While some methods alter the physical plant dynamics directly \cite{Teixeira2012revealing,Rahman2014MovingTD,Tian2017hidden}, others append the plant dynamics with an auxiliary system with possibly more complex dynamics \cite{Weerakkody2015moving,Schellenberger2017auxiliary,Ghaderi2019,Griffioen2019}.
Despite the consideration of more complex dynamics for the auxiliary systems, moving target defense has only been applied to systems that have LTI dynamics. 
Although complex dynamics cause the behavior of the test metric to change in time, methods for selecting a time-varying threshold involve hand tuning.
Altering plant dynamics can allow for detection of all attack models, but the method makes certain assumptions about the system.
Note, for the auxiliary systems it is assumed that the plant has secure knowledge of its own state, which does not account for vulnerable networked sensors.
Also, when an auxiliary system is not used, it is assumed that the plant dynamics are changeable.

The introduction of a watermark was first proposed as a way of making the $\chi^2$ detector robust to replay attacks \cite{Mo2009} and other more advanced attacks \cite{Weerakkody2014}.
Here, the watermark takes the form of independent identically distributed (IID) Gaussian noise that is added to the control input.
Robustness to replay attacks is then achieved by properly selecting the watermark covariance, while the $\chi^2$ detector itself remains unchanged.
Dynamic Watermarking uses a metric that relies on both the covariance of the residuals and the correlation between the residuals and the watermark. 
The covariance of the watermark is allowed to be an arbitrary symmetric full rank matrix \cite{Satchidanandan2017,Hespanhol2017,satchidanandan2017defending,Hespanhol2018,rubio2016event}.
In these works, the metric uses the measurement residuals contained in a temporally sliding window.
Guarantees of detection are then made as the window size tends to infinity.
Extensions to a limited subset of nonlinear systems have been implemented \cite{Ko2016,satchidanandan2017defending}, but otherwise Dynamic Watermarking has been limited to LTI systems.
Though the addition of the watermark causes a degradation in system performance, the degradation can be minimized \cite{Mo2014detecting,Hosseini2016optimal}.
Other work has considered allowing the watermark signal to be auto-correlated \cite{Mo2015} or to have distributions that are not Gaussian \cite{Satchidanandan2020design,hespanhol2019sensor}.
Furthermore, other forms of watermarks include intentional package drops \cite{Ozel2017packet,Weerakkody2017packet}, using parameterized transforms on measurements \cite{Ferrari2017isolation,Ferrari2017detection,Teixeira2018multiplicative}, and B-splines added to feed forward inputs \cite{Romagnoli2019}.
Though Dynamic Watermarking is unable to detect zero-dynamics attacks, it does not require the assumption of changeable plant dynamics or locally secure knowledge of plant state.
This paper focuses on Dynamic Watermarking as described in Hespanhol et al. \cite{Hespanhol2017} due to its ability to be applied to a wide range of LTI systems including both fully and partially observable systems.

\subsection{Contributions}
% For systems that attempt to maintain a steady state of operation, the assumption of an LTI model is sufficient.
% For most other scenarios, LTI models are insufficient.
% As a result, current attack detection algorithms are inadequate.
% In this paper, we bridge this gap by extending Dynamic Watermarking to LTV systems.

The contributions of this paper are threefold.
First, the tests used in Hespanhol et al. \cite{Hespanhol2017} are extended to LTV systems.
This is done using a carefully designed matrix normalization factor to accommodate the temporal changes in the system.
These tests are then proven to detect generalized replay attacks.
Second, a model is developed for time-varying generalized replay attacks.
Third, LTV Dynamic Watermarking is applied to a simulated system to provide proof of concept.

The remainder of this paper is organized as follows. 
Section \ref{sec:Notation} introduces notation.
Section \ref{sec:inspire} reviews the methods in Hespanhol et al. \cite{Hespanhol2017} to motivate the need for LTV Dynamic Watermarking.
Asymptotic guarantees and implementable tests for LTV Dynamic Watermarking are provided in Sections \ref{sec:LTV_theory} and \ref{sec:LTV_practical} respectively. 
Simulated results are presented in Section \ref{sec:sim}.
The appendix covers statistical background for the proofs in this paper\add{ in addition to a few larger equations that have been removed from proofs to improve readability}.
\subsection{Notation}\label{sec:Notation}
This section breifly introduces the notation used in this paper.
The 2-norm of a vector $x$ is denoted $\|x\|$.
Similarly, the 2-norm of a matrix $X$ is denoted $\|X\|$.
The trace of a matrix $X$ is denoted tr$(X)$.
Zero matrices of dimension $i\times j$ are denoted $0_{i\times j}$, and in the case that $i=j$, the notation is simplified to $0_i$.
Identity matrices of dimension $i$ are denoted $I_i$.

The Wishart distribution with scale matrix $\Sigma$ and $i$ degrees of freedom is denoted $\mathcal{W}(\Sigma,i)$ \cite[Section 7.2]{anderson2003}.
The multivariate Gaussian distribution with mean $\mu$  and covariance $\Sigma$ is denoted $\mathcal{N}(\mu,\Sigma)$.
The chi-squared distribution with $i$ degrees of freedom is denoted $\chi^2(i)$. 
% The multivariate gamma function of dimension $i$ is denoted $\Gamma_i$ \cite[Definition 7.2.1]{Anderson1958}.\mattp{not sure if I actually use the gamma function in this work}
The expectation of a random variable $a$ is denoted $\mathds{E}[a]$.
%The event that a random variable $a$ is less than a constant $\epsilon\in\mathbb{R}$ is denoted $\{a<\epsilon\}$. 
The probability of an event $E$ is denoted $\mathds{P}(E)$. % and in the case that the event is in bracket form the brackets are often dropped for simplicity.
Given a sequence of random variables $\{a_i\}_{i=1}^\infty$, convergence in probability is denoted $\text{p-lim}_{i\to\infty}a_i$ and almost sure convergence is denoted $\text{as-lim}_{i\to\infty}a_i$ \cite[Definition 7.2.1]{grimmett2001probability}.

\section{Inspiration for LTV Watermarking}\label{sec:inspire}
This section describes the inspiration for LTV dynamic watermarking by summarizing the method described in Hespanhol et al. \cite{Hespanhol2017} for LTI systems.
Consider an LTI system with state $x_n$, measurement $y_n$, process noise $w_n$, measurement noise $z_n$, watermark $e_n$, additive attack $v_n$, and stabilizing feedback that uses the observed state $\hat{x}$
\begin{align}
    x_{n+1}&=Ax_n+BK\hat{x}_n+Be_n+w_n\label{eq:LTI_state_update}\\
    \hat{x}_{n+1}&=(A+BK+LC)\hat{x}_n+Be_n-Ly_n\label{eq:LTI_observer_update}\\
    y_n&=Cx_n+z_n+v_n\label{eq:LTI_measurment}
\end{align}
where $x_n,\hat{x}_n,w_n\in \mathbb{R}^p$, $e_n\in\mathbb{R}^q$, $y_n,z_n,v_n\in\mathbb{R}^r$, and $x_0=0_{p\times 1}$.
The process noise $w_n$, measurement noise $z_n$, and watermark $e_n$ are mutually independent and take the form $w_n\sim\mathcal{N}(0_{p\times 1},\Sigma_{w})$, $z_n\sim\mathcal{N}(0_{r\times 1},\Sigma_{z})$, and $e_n\sim\mathcal{N}(0_{q\times 1},\Sigma_e)$. 
While the process and measurement noise are unknown to the controller, the watermark signal is generated by the controller and is known. 
The following assumption is made on the controller, observer, and watermark design.
\begin{asmp}
Assume $\|A+BK\|<1$, $\|A+LC\|<1$, and $\Sigma_e$ is full rank.
\end{asmp}

The measurement residual for this system takes the form $C\hat{x}_n-y_n$.
When an attack is not present, the distribution of the measurement residuals converge to a zero mean Gaussian distribution with covariance $\Sigma$ where
\begin{align}
    \Sigma=\lim_{n\to\infty}~\mathds{E}[(C\hat{x}_n-y_n)(C\hat{x}_n-y_n)^\T].\label{eq:LTI_expected_value}
\end{align}
Note, for a LTV system, the limit in \eqref{eq:LTI_expected_value} may not exist.

% Next, a particular attack model is considered. 
% Since replay attacks, which replace the true measurements with recorded or fabricated measurements, have been used in real attacks \cite{Langner2011}, they are of particular interest.
% Therefore a generalized version of a replay attack is used in this paper.
Next, consider a generalization of a replay attack satisfying
\begin{align}
    v_n&=\alpha(Cx_n+z_n)+C\xi_n+\zeta_n\label{eq:LTI_attack}\\
    \xi_{n+1}&=(A+BK)\xi_n+\omega_n\label{eq:LTI_false_state_update}
\end{align}
where $\alpha\in\mathbb{R}$ is called the \emph{attack scaling factor}, the false state $\xi_n\in\mathbb{R}^p$ has process noise $\omega_n\in\mathbb{R}^p$ and measurement noise $\zeta_n\in\mathbb{R}^r$ that take the form $\omega_n\sim\mathcal{N}(0_{p\times 1},\Sigma_\omega)$ and $\zeta_n\sim\mathcal{N}(0_{r\times 1},\Sigma_\zeta)$, and are mutually independent with each other and with $w_n$ and $z_n$.
Note, when $\Sigma_\omega$ and $\Sigma_\zeta$ are selected such that the covariance of the measurement residual is unaltered and the attack scaling parameter is $-1$, this model describes a replay attack.
While attackers may have the ability to start and stop attacks at will, attacks that are only present for finite time are not guaranteed to be detected.
Therefore, when considering asymptotic guarantees of detection, the assumption of persistence is made.
To formally describe these persistent attacks, consider the following definition.
\begin{df} \label{def:LTI_power}
The \underline{asymptotic attack power} is defined as
\begin{align}
    \underset{i\to\infty}{\textup{as-lim}}~\textstyle\frac{1}{i}\sum_{n=0}^{i-1}v_n^\T v_n.\label{eq:LTI_attack_power}
\end{align}
\end{df}
\noindent Under this definition, an attack with non-zero asymptotic power is deemed to be persistent.

The asymptotic claims of LTI dynamic watermarking take the form of the following theorem.
\begin{theorem}\cite[Theorem 1]{Hespanhol2017}\label{thm:LTI_asymptotic_tests}
Consider an attacked LTI system satisfying \eqref{eq:LTI_state_update}-\eqref{eq:LTI_measurment}, an attack model satisfying \eqref{eq:LTI_attack}-\eqref{eq:LTI_false_state_update}, and $\Sigma$ satisfying \eqref{eq:LTI_expected_value}.
Let $k^\prime=\min\{k\geq0~|~C(A+BK)^kB\neq0_{r\times q}\}$ be finite.
If 
\begin{align}
    \underset{i\to \infty}{\textup{as-lim}}&~\textstyle\frac{1}{i}\sum_{n=0}^{i-1}(C\hat{x}_n-y_n)(C\hat{x}_n-y_n)^\T=\Sigma\label{eq:LTI_test1}\\
    \intertext{and}
    \underset{i\to \infty}{\textup{as-lim}}&~\textstyle\frac{1}{i}\sum_{n=0}^{i-1}(C\hat{x}_n-y_n)e_{n-k^\prime-1}^\T=0_{r\times q},\label{eq:LTI_test2}
\end{align}
then the asymptotic attack power is 0.
\end{theorem}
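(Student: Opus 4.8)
The plan is to express the measurement residual $C\hat{x}_n - y_n$ in the attacked system as a sum of a "nominal" stochastic term (the one that would appear with no attack) plus attack-induced terms, and then to show that the two asymptotic hypotheses \eqref{eq:LTI_test1}--\eqref{eq:LTI_test2} force every attack-induced term to vanish in an appropriate averaged sense. First I would subtract the observer dynamics \eqref{eq:LTI_observer_update} from the state dynamics \eqref{eq:LTI_state_update} to get a recursion for the estimation error $x_n - \hat{x}_n$; because of the attack term $v_n$ entering through $-Ly_n$, this error carries a forced component driven by $v_n$. Substituting into $C\hat{x}_n - y_n = C(\hat{x}_n - x_n) - z_n - v_n$ and unrolling the stable recursion (using $\|A+LC\|<1$ from Assumption~1), I would write $C\hat{x}_n - y_n = \rho_n - v_n - \sum_{j\ge 1}(\text{decaying})\,v_{n-j} + (\text{terms in }w,z,e)$, where $\rho_n$ is exactly the unattacked residual with limiting covariance $\Sigma$.

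The second step is to use the generalized-replay structure \eqref{eq:LTI_attack}--\eqref{eq:LTI_false_state_update}: $v_n = \alpha(Cx_n + z_n) + C\xi_n + \zeta_n$, where $\xi_n$ runs the same stable closed-loop matrix $A+BK$ driven by independent noise $\omega_n$, and is independent of the watermark $e_n$. The key structural fact I would exploit is that the watermark $e_n$ influences the true state $x_n$ (hence $Cx_n$, hence $v_n$) only through the terms $C(A+BK)^k B\, e_{n-k-1}$, and by definition of $k'$ the first nonzero such term is at lag $k'+1$; meanwhile $\xi_n$, $\zeta_n$, $w$, $z$ are all independent of the entire $e$-process. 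Therefore, when I form the cross-correlation $\frac{1}{i}\sum (C\hat{x}_n - y_n)e_{n-k'-1}^\T$ and pass to the limit (invoking a strong law for the relevant ergodic averages, as presumably set up in the appendix), every term except the one proportional to the attack's coupling to $e_{n-k'-1}$ averages to zero, and hypothesis \eqref{eq:LTI_test2} then forces the coefficient of that surviving term — which is a nonzero multiple of $\alpha\,\Sigma_e$ composed with $C(A+BK)^{k'}B$ — to be zero; since $\Sigma_e$ is full rank and $C(A+BK)^{k'}B \neq 0$, this yields $\alpha = 0$.

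The third step handles the remaining additive part $C\xi_n + \zeta_n$ once $\alpha=0$. With $\alpha = 0$ the residual becomes $\rho_n - (C\xi_n + \zeta_n) - (\text{decaying convolution of }C\xi + \zeta) + (\text{noise terms})$, and $\{\xi_n,\zeta_n\}$ is independent of $\{\rho_n\}$. Expanding the covariance average in \eqref{eq:LTI_test1}, the cross terms between $\rho$ and the $\xi,\zeta$ part vanish in the limit by independence and zero mean, so \eqref{eq:LTI_test1} reduces to the statement that the limiting averaged covariance contributed purely by $C\xi_n + \zeta_n$ (and its stable convolution tail) is the zero matrix; since that contribution is a sum of positive-semidefinite matrices, each must be zero, forcing $C\xi_n + \zeta_n \to 0$ in the mean-square/almost-sure averaged sense. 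Hence $v_n = \alpha(Cx_n + z_n) + C\xi_n + \zeta_n \to 0$ in the same sense, so $\frac{1}{i}\sum_{n=0}^{i-1} v_n^\T v_n \to 0$, i.e., the asymptotic attack power is $0$.

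The main obstacle I anticipate is the rigorous justification of the limit-exchange and strong-law steps: the residual is an infinite convolution of not-quite-stationary processes (the closed-loop states are only asymptotically stationary, started from $x_0 = 0$), so I would need a lemma — presumably the statistical background cited for the appendix — guaranteeing that Ces\`aro averages of these products converge almost surely to the corresponding expectations, with the transient from the nonzero initialization washing out. A secondary technical point is bookkeeping the decaying convolution tails $\sum_{j\ge1}C(A+BK+LC)^{j-1}L(\cdots)$ carefully enough to see that they contribute only positive-semidefinite blocks to \eqref{eq:LTI_test1} and do not conspire to cancel the leading term in \eqref{eq:LTI_test2}; isolating the lag-$(k'+1)$ component cleanly is where the specific choice of $k'$ and the full-rank assumption on $\Sigma_e$ do the real work.
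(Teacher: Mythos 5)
Your overall architecture is the right one, and it mirrors how this paper proves the LTV analogue (the paper itself only cites \cite{Hespanhol2017} for this statement, but its Theorem \ref{thm:LTV_Asymptotic_Main_Result} is proved in exactly your two stages): first use the watermark-correlation condition \eqref{eq:LTI_test2} to force $\alpha=0$ (cf.\ Theorem \ref{thm:alpha}), then use the covariance condition \eqref{eq:LTI_test1} to kill the additive part $C\xi_n+\zeta_n$. Your Step 2 is essentially sound: at lag $k'+1$ the only surviving correlation is $-\alpha\,C(A+BK)^{k'}B\,\Sigma_e$, and full rank of $\Sigma_e$ plus $C(A+BK)^{k'}B\neq 0$ gives $\alpha=0$, modulo the strong-law machinery you correctly defer to an appendix-style lemma.

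The genuine gap is in Step 3, and it is not the ``secondary bookkeeping point'' you describe but the crux of the theorem. After the cross terms with the nominal residual vanish, what \eqref{eq:LTI_test1} actually gives you is that the averaged covariance of the \emph{attack contribution to the residual}, i.e.\ of $C\hat{\delta}_n-(C\xi_n+\zeta_n)$ where $\hat{\delta}_n$ is the observer-error response driven by $-Lv_m$, $m<n$, tends to zero. This is a single positive-semidefinite limit; it does \emph{not} decompose into a PSD block equal to the averaged covariance of $C\xi_n+\zeta_n$ plus something PSD, because the convolution tail $C\hat{\delta}_n$ is built from past values of the \emph{same} autocorrelated process $\xi$ (through $v_{n-1},v_{n-2},\dots$) and is therefore correlated with the current $C\xi_n$. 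So ``each PSD piece must be zero'' does not follow, and the adversarial scenario you must exclude is precisely that the observer response tracks the false state, $C\hat{\delta}_n\approx C\xi_n+\zeta_n$, making the residual contribution small while $v_n$ stays large. The paper's LTV proof devotes Theorems \ref{thm:me_noise} and \ref{thm:st_noise} (with Lemma \ref{thm:min_m}) to this: the $\zeta_n$ block does split off cleanly (it is uncorrelated with $C\hat{\delta}_n-C\xi_n$ at the same time index), but to rule out cancellation against $C\xi_n$ one isolates the most recent innovation $\omega_{n-m'}$ appearing in $\xi_n$ that $\hat{\delta}_n$ cannot yet have responded to, and shows its contribution cannot be cancelled, so the residual contribution cannot vanish unless the averaged covariance of $C\xi_n$ does. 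Without an argument of this kind (or an equivalent one from \cite{Hespanhol2017}), your Step 3 does not close, and hence the conclusion that the asymptotic attack power is zero is not yet established.
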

The delay of the watermark by $k^\prime$ in \eqref{eq:LTI_test2} ensures that the effect of the watermark is present in the measurement signal.
Note, the contrapositive of Theorem \ref{thm:LTI_asymptotic_tests} states that for attacks with non-zero asymptotic power, \eqref{eq:LTI_test1} and \eqref{eq:LTI_test2} cannot both be satisfied.
Therefore, considering the LHS of \eqref{eq:LTI_test1} and \eqref{eq:LTI_test2}, generalized replay attacks of non-zero asymptotic power are guaranteed to be detected in infinite time.

To make these tests implementable in real time, a statistical test is derived using a sliding window of fixed size.
At each step, the combined partial sums in \eqref{eq:LTI_test1}-\eqref{eq:LTI_test2} take the form
\begin{align}
    S_n=\textstyle\sum_{i=n+1}^{n+\ell}\begin{bmatrix}(C\hat{x}_i-y_i)\\e_{i-k^\prime-1}\end{bmatrix}\begin{bmatrix}(C\hat{x}_i-y_i)^\T & e_{i-k^\prime-1}^\T\end{bmatrix}.\label{eq:LTI_test_matrix}
\end{align}
Under the assumption of no attack, $S_n$ converges asymptotically to the Wishart distribution with scale matrix 
\begin{align}
     S=\begin{bmatrix}\Sigma & 0_{r\times q}\\0_{q\times r} & \Sigma_e\end{bmatrix}
\end{align}
and $\ell$ degrees of freedom as $\ell\to\infty$. 
Furthermore, for a generalized replay attack of non-zero asymptotic power, Theorem \ref{thm:LTI_asymptotic_tests} gives us that the scale matrix for $S_n$ is no longer $S$, since either \eqref{eq:LTI_test1} or \eqref{eq:LTI_test2} is not satisfied.
Given the sampled matrix $S_n$, the test then uses the negative log likelihood of the scale matrix
\begin{align}
    \mathcal{L}(S_n)=(m+q+1-\ell)\log(|S_n|)+\text{tr}\left(S^{-1}S_n\right).
\end{align}
Negative log likelihood values that exceed a user defined threshold, signal an attack.

For LTV systems, the limits in \eqref{eq:LTI_test1}-\eqref{eq:LTI_test2} may not exist. 
Furthermore, the sampled matrices $S_n$ may no longer be approximated as a Wishart distribution since the vectors used to create it in \eqref{eq:LTI_test_matrix} are not necessarily identically distributed. 
To accommodate these changes in distribution, it is necessary to develop a new method.

% \mattp{explain why useful}

% \subsection{Limitations of LTI Dynamic Watermarking}
% \mattp{move this to the end with the experimental/simulated stuff}
% \mattp{I want to show the result of LTI dynamic watermarking for systems that are close to LTI and then show that after LTV stuff is added they no longer function right}
% \begin{figure}
%     \centering
%     \includegraphics[width=0.5\textwidth]{figs/slam_spoofing_1.png}
%     \caption{\mattp{this figure will show the result of LTI dynamic watermarking on nearly LTI systems in theory and in practice}}
%     \label{fig:my_label}
% \end{figure}

% \begin{figure}
%     \centering
%     \includegraphics[width=0.5\textwidth]{figs/slam_spoofing_1.png}
%     \caption{\mattp{this figure will show the result of LTI dynamic watermarking on LTV systems}}
%     \label{fig:my_label}
% \end{figure}

% \mattp{I also want to show the result of the autocorrelation on the LTI case since it only converges to wishart}
% \begin{figure}
%     \centering
%     \includegraphics[width=0.5\textwidth]{figs/slam_spoofing_1.png}
%     \caption{\mattp{this figure will show the result of autocorrelation on the effectiveness of detection for a toy example.}}
%     \label{fig:my_label}
% \end{figure}
\section{LTV Dynamic Watermarking}\label{sec:LTV_theory}
% \mattp{reduce the number of words here by writing in the third person}
This section derives the limit-based formulation of Dynamic Watermarking for a discrete-time LTV system. 
First, the LTV dynamics, necessary assumptions, and the resulting limit based tests are defined. 
Subsequently, Section \ref{sec:intermediate_results} provides intermediate results to prove these claims.

Consider an LTV system with state $x_n$, measurement $y_n$, process noise $w_n$, measurement noise $z_n$, watermark $e_n$, additive attack $v_n$, and stabilizing feedback that uses the observed state $\hat{x}$
\begin{align}
    x_{n+1}&=A_nx_n+B_nK_n\hat{x}_n+B_ne_n+w_n\label{eq:state_update}\\
    y_n&=C_nx_n+z_n+v_n\label{eq:output_equation}
\end{align}
where $x_n,\hat{x}_n,w_n\in \mathbb{R}^p$, $e_n\in\mathbb{R}^q$, $y_n,z_n,v_n\in\mathbb{R}^r$, and $x_0=0_{p\times 1}$.
The process noise $w_n$, measurement noise $z_n$, and watermark $e_n$ are mutually independent and take the form $w_n\sim\mathcal{N}(0_{p\times 1},\Sigma_{w,n})$, $z_n\sim\mathcal{N}(0_{r\times 1},\Sigma_{z,n})$, and $e_n\sim\mathcal{N}(0_{q\times 1},\Sigma_e)$. 
While the process and measurement noise are unknown to the controller, the watermark signal is generated by the controller and is known.
For simplicity, define $\AK_n=(A_n+B_nK_n)$ and $\AK_{(n,m)}=\AK_n\cdots\AK_m$ for $n\geq m$ and $\AK_{(n,n+1)}=I_p$.
We make the following assumption.
\begin{asmp}\label{asmp:bound1}
The covariances $\Sigma_e$, $\Sigma_{w,n}$, and $\Sigma_{z,n}$, of the random variables used in \eqref{eq:state_update}-\eqref{eq:output_equation}, are full rank.
Furthermore, there exists positive constants $\eta_w,\eta_z,\eta_{\AK},\eta_B,\eta_C\in\mathbb{R}$ such that $\|\Sigma_{w,n}\|<\eta_w$, $\|\Sigma_{z,n}\|<\eta_z$, $\|\AK_n\|<\eta_{\AK}<1$, $\|B_n\|<\eta_B$, and $\|C_n\|<\eta_C$, for all $n\in\mathbb{N}$.
\end{asmp}
\noindent The assumption of bounded full rank covariances for the process and measurement noise are satisfied for most systems by modeling error and sensor noise.
Furthermore, the input and output matrices are often constrained to be finite by sensor and actuator limits.
Since the watermark and controller are user defined, the remainder of the assumptions can be satisfied so long as the system is controllable.
We make the following assumption.
\begin{asmp}
\begin{align}
    \lim_{i\to\infty}~\textstyle\frac{1}{i}\sum_{n=0}^{i-1} C_nB_{n-1}\neq 0_{r\times q}.\label{eq:consistently_observervable_condition}
\end{align}
\end{asmp}
% \pedro{Are we assuming the limit above exists?}
% \mattp{We make no assumption that the limit exists}
% \pedro{If it exists is it important to know what is the limit matrix?.}
% \mattp{For the asymptotic guarantee of detection we dont care, but I suppose it may have some practical benefit to know what the expected magnitude is at each step}
% \pedro{Also, how does this assumption relates to the condition of Thm.II.3?}
% \mattp{This is somewhat equivalent to assuming that $k^\prime=0$ for the LTI case. I added a sentence to note this.}
% \pedro{The necessity of considering time-lags beyond 1 was to relax the full-rank assumption of the B matrix in the LTI setting. Here the $B_n$ may be rank-deficient.If so, the watermarking may fall in the null-space and never be "measured" in the output.}
% \mattp{While that is possible for some systems, we have limited ourselves a bit here.}
% \pedro{Could this be solved by adding the matrix $\bar{A}_{(n,m)}$, for some $m$ in the limit above?}
% \mattp{I had been meaning to add $\bar{A}_{(n,m)}$ but I havent had the time to work through Theorem \ref{thm:alpha}}
\noindent Here, \eqref{eq:consistently_observervable_condition} guarantees an asymptotic correlation between the measurement signal $y_n$ and the watermark signal $e_{n-1}$, which has been delayed by a single time step.
This ensures that the watermark has a persistent measurable effect on the measurement signal, which can then be used for validation purposes.
This is similar to assuming $k^\prime$ is equal to 0 for the LTI case.

The observer and the corresponding observer error, defined as $\delta_n=\hat{x}_n-x_n$, satisfy 
\begin{align}
    \hat{x}_{n+1}&=(\AK_n+L_nC_n)\hat{x}_n+B_ne_n-L_ny_n\label{eq:obsver_update}\\
    \delta_{n+1}&=(A_n+L_nC_n)\delta_n-w_n-L_n(z_n+v_n),\label{eq:observer_error_update_full}
\end{align}
where $\hat{x}_0=\delta_0=0_{p\times 1}$. 
For simplicity, define $\AL_n=(A_n+L_nC_n)$ and $\AL_{(n,m)}=\AL_n\cdots\AL_m$ for $n\geq m$ and $\AL_{(n,n+1)}=I_p$.
Furthermore, let
\begin{align}
    \bar{\delta}_{n+1}&=\AL_n\bar{\delta}_n-w_n-L_nz_n\label{eq:observer_error_update_normal}\\
    \delAttack_{n+1}&=\AL_n\delAttack_{n}-L_nv_n\label{eq:observer_error_update_attack}
\end{align}
where $\bar{\delta}_0=\delAttack_0=0_{p\times1}$. 
Note that $\delta_n=\bar{\delta}_n+\delAttack_n$ and that when $v_n=0_{r\times1}, ~\forall n$ we have that $\delAttack_n=0_{p\times1}, ~\forall n$. 
Here $\bar{\delta}_n$ can be thought of as the portion of the observer error that results from the original noise of the system, while $\delAttack_n$ is the contribution of the attack to the observer error. 

Next, consider the expected value $\Sigma_{\delta,n}=\mathds{E}[\bar{\delta}_n\bar{\delta}_n^\T]=\mathds{E}[\delta_n\delta_n^\T~|~v_n=0_{r\times 1},~\forall n]$, which can be written as
\begin{align}
    \Sigma_{\delta,n}&=\textstyle\sum _{i=0}^n \AL_{(n-1,n-i+1)}(\Sigma_{w,n-i}+\nonumber\\
    &\qquad+L_{n-i}\Sigma_{z,n-i}L_{n-i}^\T)\AL_{(n-1,n-i+1)}^\T.\label{eq:observer_error_covariance_normal}
\end{align}
The \emph{matrix normalization factor} is then defined as
\begin{align}
    V_n=(C_n\Sigma_{\delta,n}C_n^\T+\Sigma_{z,n})^{-1/2}\label{eq:residual_normalizer},
\end{align}
which exists since $\Sigma_{z,n}$ is full rank. 
For an LTI system, the matrix $V_n=\Sigma^{-1/2}$ where $\Sigma$ is as defined in \eqref{eq:LTI_expected_value}.
For the LTV system, the matrix normalization factor can be thought of as a time-varying normalization for the measurement residual.
Next, we make the following assumption about the observer.
\begin{asmp}\label{asmp:obs_bounds}
    There exists positive constants $\eta_{\AL},$ $\eta_L,$ $\eta_\delta,$
    $\eta_V\in\mathbb{R}$ such that $\|\AL_n\|<\eta_{\AL}<1$, $\|L_n\|<\eta_L$, $\|\Sigma_{\delta,n}\|<\eta_\delta$, and $\|V_n\|<\eta_V$, for all $n\in\mathbb{N}$.
\end{asmp}
\noindent If the system in \eqref{eq:state_update}-\eqref{eq:output_equation} is observable, then the user defined controller can satisfy the assumption on $\AL_n$.  
Previous assumptions imply the assumptions on $L_n,~\Sigma_{\delta,n},$ and $V_n$ are satisfied, but the bounds here are used to simplify notation.

Next, we alter the attack defined in \eqref{eq:LTI_attack}-\eqref{eq:LTI_false_state_update} to create a time-varying equivalent.
Consider an attack $v_n$ that satisfies
\begin{align}
    v_n&=\alpha(C_nx_n+z_n)+C_n\xi_n+\zeta_n\label{eq:attack_definition}\\
    \xi_{n+1}&=\AK_n\xi_n+\omega_n\label{eq:false_state_update},
\end{align}
where $\alpha\in\mathbb{R}$ is called the \emph{attack scaling factor}, the \emph{false state} $\xi_n\in\mathbb{R}^p$ has process noise $\omega_n\in\mathbb{R}^p$ and measurement noise $\zeta_n\in\mathbb{R}^r$ that take the form $\omega_n\sim\mathcal{N}(0_{p\times 1},\Sigma_{\omega,n})$ and  $\zeta_n\sim\mathcal{N}(0_{r\times 1},\Sigma_{\zeta,n})$ and are mutually independent with each other and with $w_n$ and $z_n$. 
Similar to the LTI case, when $\Sigma_{\omega,n}$ and $\Sigma_{\zeta,n}$ are selected properly and the attack scaling parameter is $-1$, this model describes a replay attack.
The results of such an attack can have devastating results as shown in Figure \ref{fig:LTV_attacked}.
While an attacker could choose to allow the noise to have unbounded covariance, the resulting attack would be trivial to detect.
Therefore, we make the following assumption about the attack model.
\begin{asmp}
When there is an attack, $v_n$ follows the dynamics \eqref{eq:attack_definition}-\eqref{eq:false_state_update} with the attack scaling factor remaining constant.
Furthermore, there exists positive constants $\eta_\omega,\eta_\eta\in\mathbb{R}$ such that $\|\Sigma_{\omega,n}\|<\eta_\omega,~\|\Sigma_{\zeta,n}\|<\eta_\zeta$, for all $n\in\mathbb{N}$.
\end{asmp}

To make asymptotic guarantees of detection, we also assume the persistence of attacks using the following definition. 
\begin{df}\label{def:LTV_power}
The \underline{asymptotic attack power} is defined as
\begin{align}
    \plim{i}~\textstyle\frac{1}{i}\sum_{n=0}^{i-1}v_n^\T v_n.\label{eq:attack_power}
\end{align}
\end{df}

Similar to prior research in Dynamic Watermarking, we first define the asymptotic tests.
\begin{theorem}\label{thm:LTV_Asymptotic_Main_Result}
Consider an attacked LTV system satisfying the dynamics in \eqref{eq:state_update}-\eqref{eq:observer_error_update_attack}. 
Let $V_n$ be as defined in \eqref{eq:residual_normalizer}.
If $v_n=0_{r\times 1}$, for all $n\in\mathbb{N}$, then 
\begin{align}
    \plim{i}~&\textstyle\frac{1}{i}\sum_{n=0}^{i-1} V_n(C_n\hat{x}_n-y_n)e_{n-1}^\T=0_{r\times q} \tag{C1} \label{eq:ltv_watermark_correlation_test}\\
    \intertext{and}
    \plim{i}~&\textstyle\frac{1}{i}\sum_{n=0}^{i-1} V_n(C_n\hat{x}_n-y_n)(C_n\hat{x}_n-y_n)^\T V_n^\T=I_r \tag{C2} \label{eq:ltv_covariance_test}.
\end{align}
Furthermore, if the attack follows the dynamics in \eqref{eq:attack_definition}-\eqref{eq:false_state_update} and has non-zero asymptotic power as defined in Definition \ref{def:LTV_power}, then \eqref{eq:ltv_watermark_correlation_test} and \eqref{eq:ltv_covariance_test} cannot both be satisfied.
\end{theorem}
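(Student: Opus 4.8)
\emph{Overall strategy.} In both halves of Theorem~\ref{thm:LTV_Asymptotic_Main_Result} the plan is to reduce a statement about the random Ces\`aro average $\tfrac1i\sum_{n=0}^{i-1}(\cdot)$ to (i) the deterministic limit of the averaged means and (ii) an $L^2$ law-of-large-numbers bound showing the fluctuation about that mean vanishes in probability; the intermediate results of Section~\ref{sec:intermediate_results} are exactly the uniform moment and decorrelation estimates needed for (ii). For the unattacked case, set $v_n\equiv0$; then $\delAttack_n\equiv0$ by \eqref{eq:observer_error_update_attack}, so the residual is $C_n\hat{x}_n-y_n=C_n\bar\delta_n-z_n$, a linear function of $w_0,\dots,w_{n-1},z_0,\dots,z_n$ only, hence independent of $e_{n-1}$. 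Thus every summand of \eqref{eq:ltv_watermark_correlation_test} has zero mean, and for $n\neq m$ the summands are uncorrelated because the watermark is white and independent of $\{w_k,z_k\}$; the geometric bound on $\|\AL_{(n-1,\cdot)}\|$ applied to \eqref{eq:observer_error_covariance_normal} together with Assumptions~\ref{asmp:bound1} and \ref{asmp:obs_bounds} makes each second moment uniformly bounded, so $\mathds{E}\|\tfrac1i\sum_n V_n(C_n\hat{x}_n-y_n)e_{n-1}^\T\|^2=O(1/i)$ and \eqref{eq:ltv_watermark_correlation_test} follows. For \eqref{eq:ltv_covariance_test}, since $\bar\delta_n$ does not see $z_n$, the cross term drops and $\mathds{E}[(C_n\bar\delta_n-z_n)(C_n\bar\delta_n-z_n)^\T]=C_n\Sigma_{\delta,n}C_n^\T+\Sigma_{z,n}=V_n^{-2}$ by \eqref{eq:residual_normalizer}, so each summand has mean exactly $I_r$; the summands are no longer identically distributed, but residuals at times $n$ and $m$ share only the common noise $\{w_k,z_k\}_{k<\min(n,m)}$, whose contribution is damped by $\eta_{\AL}^{|n-m|}$, so evaluating the relevant fourth moments via Gaussianity bounds the covariance of the $n$-th and $m$-th summands by $O(\eta_{\AL}^{|n-m|})$ and the $L^2$ fluctuation is again $O(1/i)$.

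\emph{Attacked case, reduction to $\alpha=0$.} Argue by contradiction: suppose the attack obeys \eqref{eq:attack_definition}--\eqref{eq:false_state_update}, has non-zero asymptotic power (Definition~\ref{def:LTV_power}), yet \eqref{eq:ltv_watermark_correlation_test} and \eqref{eq:ltv_covariance_test} both hold. Decompose $\delta_n=\bar\delta_n+\delAttack_n$, so $C_n\hat{x}_n-y_n=(C_n\bar\delta_n-z_n)+(C_n\delAttack_n-v_n)$. From \eqref{eq:state_update} the only dependence of $x_n$ on $e_{n-1}$ is the term $B_{n-1}e_{n-1}$, giving $\mathds{E}[x_ne_{n-1}^\T]=B_{n-1}\Sigma_e$, while $\bar\delta_n,\delAttack_n,z_n,\xi_n,\zeta_n$ are all independent of $e_{n-1}$; hence $\mathds{E}[(C_n\hat{x}_n-y_n)e_{n-1}^\T]=-\alpha C_nB_{n-1}\Sigma_e$. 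A weak-dependence estimate as before --- now also using $\|\AK_n\|<\eta_{\AK}<1$ to damp the false state $\xi_n$ and the bounds on $\Sigma_{\omega,n},\Sigma_{\zeta,n}$ --- shows the fluctuation of the average in \eqref{eq:ltv_watermark_correlation_test} about its mean vanishes in probability, so \eqref{eq:ltv_watermark_correlation_test} forces $\alpha\lim_{i\to\infty}\tfrac1i\sum_n V_nC_nB_{n-1}\Sigma_e=0$. Since $\Sigma_e$ is full rank and $V_n$ is uniformly bounded and uniformly invertible (Assumptions~\ref{asmp:bound1}, \ref{asmp:obs_bounds}), the consistent-observability hypothesis \eqref{eq:consistently_observervable_condition} precludes this limit being zero, forcing $\alpha=0$.

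\emph{Attacked case, $\alpha=0$.} With $\alpha=0$ the attack $v_n=C_n\xi_n+\zeta_n$ is independent of $\{w_k,z_k\}$ (and does not close any loop around the plant, so the closed loop is exponentially mixing), the two pieces of the residual are independent, and $\mathds{E}[(C_n\hat{x}_n-y_n)(C_n\hat{x}_n-y_n)^\T]=V_n^{-2}+\mathds{E}[(C_n\delAttack_n-v_n)(C_n\delAttack_n-v_n)^\T]$ with the second term positive semidefinite. Test \eqref{eq:ltv_covariance_test}, after the fluctuation is again shown to vanish, then forces $\lim_{i\to\infty}\tfrac1i\sum_n\text{tr}\,\mathds{E}[(C_n\delAttack_n-v_n)(C_n\delAttack_n-v_n)^\T]=0$ (using $V_n^\T V_n\succeq cI_r$ for some $c>0$), i.e.\ the attack-induced residual $\tilde r_n:=C_n\delAttack_n-v_n$ has zero asymptotic power. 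It remains to recover the attack power: by \eqref{eq:observer_error_update_attack}, $\delAttack_n=-\sum_{j<n}\AL_{(n-1,j+1)}L_jv_j$ is a geometrically decaying filter of $v$, and $v_n=C_n\delAttack_n-\tilde r_n$; substituting and summing over $n<i$, the bounds of Assumption~\ref{asmp:obs_bounds} yield $\tfrac1i\sum_{n<i}\mathds{E}\|v_n\|^2\le c'\tfrac1i\sum_{n<i}\mathds{E}\|v_n\|^2+c''\tfrac1i\sum_{n<i}\mathds{E}\|\tilde r_n\|^2$ which, once $c'<1$ is established, gives $\tfrac1i\sum_{n<i}\mathds{E}\|v_n\|^2\to0$ and hence zero asymptotic attack power --- the desired contradiction.

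\emph{Main obstacle.} The two places where real work is needed are the weak-dependence/mixing estimates that make the $L^2$ law of large numbers applicable (easy without an attack and when $\alpha=0$, but delicate when $\alpha\neq0$ because the attack closes a loop around the plant) and, above all, the final energy-recovery step: passing from ``$\tilde r_n$ has vanishing power'' to ``$v_n$ has vanishing power.'' This last step is the statement that the observer cannot silently absorb injected false data in an energy sense, and proving the loop-gain inequality $c'<1$ directly from the norm bounds in Assumptions~\ref{asmp:bound1}--\ref{asmp:obs_bounds} --- rather than via a frozen-time transfer-function argument as in the LTI case --- is where the care must be taken.
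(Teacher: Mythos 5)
Your overall architecture (no-attack case via an $L^2$ law of large numbers with geometric decorrelation; reduction to $\alpha=0$ through \eqref{eq:ltv_watermark_correlation_test} and the observability condition; then showing \eqref{eq:ltv_covariance_test} forces the normalized attack residual $\tilde r_n=C_n\delAttack_n-v_n$ to have vanishing averaged power, which is exactly the paper's surrogate condition \eqref{eq:attack_additive_covariance} in Theorem \ref{thm:equiv_attack_covariance}) matches the paper up to that point. The genuine gap is the final ``energy-recovery'' step. You propose to close the loop with a small-gain inequality: writing $v_n=C_n\delAttack_n-\tilde r_n$ with $\delAttack_n=-\sum_{j<n}\AL_{(n-1,j+1)}L_jv_j$ and hoping to establish $\frac1i\sum_n\mathds{E}\|v_n\|^2\le c'\frac1i\sum_n\mathds{E}\|v_n\|^2+c''\frac1i\sum_n\mathds{E}\|\tilde r_n\|^2$ with $c'<1$. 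This cannot be made to work from Assumptions \ref{asmp:bound1}--\ref{asmp:obs_bounds}: the loop gain you would get is of order $\bigl(\eta_C\eta_L/(1-\eta_{\AL})\bigr)^2$, and nothing in the hypotheses makes that smaller than $1$. Worse, the implication you are trying to prove by a gain argument --- ``$\tilde r_n$ has vanishing average power $\Rightarrow$ $v_n$ has vanishing average power'' --- is simply false for generic signals $v_n$: an output-injection attack that sets $v_n=C_n\delAttack_n$ after an initial perturbation makes $\tilde r_n$ vanish while $\delAttack_n$ evolves under the open-loop $A_n=\AL_n-L_nC_n$, which may be unstable, so $v_n$ can have non-vanishing (even growing) power. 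Such attacks are excluded only by the stochastic structure of the generalized replay model \eqref{eq:attack_definition}--\eqref{eq:false_state_update} with $\alpha=0$ (white $\zeta_n$, and $\xi_n$ a stable $\AK$-filter of white $\omega_n$, all independent of the plant noise), and your energy step never uses that structure, so no choice of constants can rescue it.

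The paper's proof is built precisely to exploit that structure: Theorem \ref{thm:me_noise} uses the fact that $\zeta_n$ is white and uncorrelated with $C_n\delAttack_n-C_n\xi_n$ to split \eqref{eq:attack_additive_covariance} into two positive-semidefinite pieces, killing the $\zeta$ power and reducing to \eqref{eq:me_noise_state_noise_claim}; then Lemma \ref{thm:min_m} and Theorem \ref{thm:st_noise} extract the smallest lag $m'$ at which the injected noise $\omega_{n-m'}$ carries non-vanishing averaged power and show, via Cauchy--Schwarz and the minimality of $m'$, that the portion of $\delAttack_n$ correlated with $\omega_{n-m'}$ (which enters only through lags $j<m'$) has vanishing power and therefore cannot cancel the diagonal $\omega_{n-m'}$ term --- contradicting \eqref{eq:me_noise_state_noise_claim} unless \eqref{eq:att_equiv_attack_state_claim} holds. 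Some argument of this kind, using whiteness and the ordering of lags rather than a loop-gain bound, is what your proposal is missing. A secondary, smaller looseness: in the $\alpha\neq0$ step you assert that \eqref{eq:consistently_observervable_condition} precludes $\lim_i\frac1i\sum_n V_nC_nB_{n-1}\Sigma_e=0$; since the assumption constrains only $\frac1i\sum_n C_nB_{n-1}$ and the weights $V_n$ are time-varying (and the weighted average need not even converge), this transfer deserves justification --- the paper sidesteps convergence with a symmetry argument on the non-converging element, though it leans on essentially the same implicit step.
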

From Theorem \ref{thm:LTV_Asymptotic_Main_Result}, the LHS of \eqref{eq:ltv_watermark_correlation_test} and \eqref{eq:ltv_covariance_test} can be used to guarantee detection of generalized replay attacks with non-zero asymptotic power in infinite time.
Note, \eqref{eq:ltv_watermark_correlation_test}, \eqref{eq:ltv_covariance_test}, and \eqref{eq:attack_power} use limits in probability as opposed to the almost sure limits used in their LTI counterparts. 
This change removes the guarantee of detection via the asymptotic tests for certain pathological examples of attacks, but both forms of convergence provide the same motivation for the statistical tests in Section \ref{sec:LTV_practical}.
Given an arbitrary real number $\epsilon$, almost sure convergence states that with probability 1 the sequence will remain a distance of less than $\epsilon$ from the limit after a finite number of steps while convergence in probability states that the probability that an element of the sequence is within a distance of $\epsilon$ from the limit converges to 1 as you continue along the sequence.
Since the statistical tests use a sliding window to consider only a finite number of steps at a time, both forms of convergence say that as the window size grows the sequence of sample averages become more likely to be closer to the limit when no attack is present. 
As a result, the test becomes more sensitive.

\subsection{Intermediate Results}\label{sec:intermediate_results}
To prove Theorem \ref{thm:LTV_Asymptotic_Main_Result}, several intermediate results must first be provided.
First, we consider the asymptotic limit \eqref{eq:ltv_watermark_correlation_test} and show that it implies $\alpha$ is equal to 0.
This allows us to assume that $\alpha$ is equal to 0 for the remainder of the intermediate results.
\begin{theorem}\label{thm:alpha} 
Consider an attacked LTV system satisfying \eqref{eq:state_update}-\eqref{eq:observer_error_update_attack} and the attack model satisfying \eqref{eq:attack_definition}-\eqref{eq:false_state_update}.
Let $V_n$ be as defined in \eqref{eq:residual_normalizer}. \eqref{eq:ltv_watermark_correlation_test} holds if and only if the attack scaling factor $\alpha$ is equal to 0.
\end{theorem}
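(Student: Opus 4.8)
The plan is to expand the normalized watermark/residual correlation in \eqref{eq:ltv_watermark_correlation_test}, isolate the single term through which $e_{n-1}$ enters the closed loop, and split the empirical average into a deterministic ``mean'' part that carries the factor $\alpha$ and a mean-zero ``fluctuation'' part that vanishes in probability. Substituting $y_n=C_nx_n+z_n+v_n$ and $\hat{x}_n=x_n+\delta_n$ gives $C_n\hat{x}_n-y_n=C_n\delta_n-z_n-v_n$, and inserting the attack model \eqref{eq:attack_definition}--\eqref{eq:false_state_update} yields $C_n\hat{x}_n-y_n=C_n\delta_n-(1+\alpha)z_n-\alpha C_nx_n-C_n\xi_n-\zeta_n$. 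Tracking how $e_{n-1}$ propagates, it enters both $x_n$ and $\hat{x}_n$ only through the common term $B_{n-1}e_{n-1}$ in \eqref{eq:state_update} and \eqref{eq:obsver_update}, so it cancels in $\delta_n=\bar{\delta}_n+\delAttack_n$, while $z_n,\xi_n,\zeta_n$ are independent of $e_{n-1}$ by construction. Writing $x_n=\tilde{x}_n+B_{n-1}e_{n-1}$ with $\tilde{x}_n$ independent of $e_{n-1}$, this gives the exact decomposition $C_n\hat{x}_n-y_n=R_n-\alpha C_nB_{n-1}e_{n-1}$, where $R_n=C_n\delta_n-(1+\alpha)z_n-\alpha C_n\tilde{x}_n-C_n\xi_n-\zeta_n$ is independent of $e_{n-1}$, hence
\[
\textstyle\frac1i\sum_{n=0}^{i-1}V_n(C_n\hat{x}_n-y_n)e_{n-1}^\T=\frac1i\sum_{n=0}^{i-1}V_nR_ne_{n-1}^\T-\alpha\,\frac1i\sum_{n=0}^{i-1}V_nC_nB_{n-1}\,e_{n-1}e_{n-1}^\T .
\]

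I would then show both sums on the right concentrate. Since the $e_n$ are i.i.d.\ with mean zero and $R_n$ is independent of $e_{n-1}$, each summand $V_nR_ne_{n-1}^\T$ has mean zero, and for $m\neq n$ the cross-covariance of $V_nR_ne_{n-1}^\T$ and $V_mR_me_{m-1}^\T$ vanishes: conditioning on everything but the later of $e_{n-1},e_{m-1}$, that watermark sample is mean zero and independent of the remaining factors. Using $\|V_n\|<\eta_V$ together with uniform second-moment bounds on $\bar{\delta}_n,\delAttack_n,\xi_n,\tilde{x}_n$ -- which follow from the geometric stability built into Assumptions \ref{asmp:bound1} and \ref{asmp:obs_bounds} ($\|\AK_n\|,\|\AL_n\|<1$ with bounded driving covariances), established as preliminary lemmas -- one bounds $\mathds{E}\|V_nR_ne_{n-1}^\T\|^2$ uniformly, so Chebyshev's inequality gives $\frac1i\sum V_nR_ne_{n-1}^\T\to0_{r\times q}$ in probability. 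Likewise $e_{n-1}e_{n-1}^\T-\Sigma_e$ is i.i.d.\ mean zero with bounded second moment and $V_nC_nB_{n-1}$ is deterministic and bounded, so $\frac1i\sum V_nC_nB_{n-1}(e_{n-1}e_{n-1}^\T-\Sigma_e)\to0_{r\times q}$ in probability. Combining, \eqref{eq:ltv_watermark_correlation_test} holds if and only if the deterministic average $\alpha\,\frac1i\sum_{n=0}^{i-1}V_nC_nB_{n-1}\Sigma_e$ tends to $0_{r\times q}$.

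The ``if'' direction is then immediate, since $\alpha=0$ annihilates the mean part. For the ``only if'' direction I would argue the deterministic limit of $\frac1i\sum V_nC_nB_{n-1}\Sigma_e$ cannot be zero: $\Sigma_e$ is invertible, \eqref{eq:consistently_observervable_condition} gives $\lim_{i\to\infty}\frac1i\sum C_nB_{n-1}\neq0_{r\times q}$, and the normalization factors are uniformly bounded above and below -- $\|V_n\|<\eta_V$, and from $\|V_n^{-1}\|^2=\|C_n\Sigma_{\delta,n}C_n^\T+\Sigma_{z,n}\|\le\eta_C^2\eta_\delta+\eta_z$ also $V_n\succeq(\eta_C^2\eta_\delta+\eta_z)^{-1/2}I_r$ -- so the $V_n$-weighting cannot cancel the nonzero average; hence $\alpha\,\frac1i\sum V_nC_nB_{n-1}\Sigma_e\to0_{r\times q}$ forces $\alpha=0$. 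The parts I expect to be the real work are the uniform second-moment bounds on the closed-loop signals that legitimize the Chebyshev step -- together with handling the regime where a large $|\alpha|$ drives the true state unstable, where \eqref{eq:ltv_watermark_correlation_test} must instead be shown to fail directly -- and the deterministic claim that the matrix-normalized average of $C_nB_{n-1}$ stays bounded away from zero, which is where the design of $V_n$ in \eqref{eq:residual_normalizer} is really used.
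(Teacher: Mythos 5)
Your decomposition is exactly the paper's: substitute the attack into the residual, cancel the noise terms that are independent of $e_{n-1}$, expand $x_n$ one step to expose $B_{n-1}e_{n-1}$, and reduce \eqref{eq:ltv_watermark_correlation_test} to the deterministic term $\alpha\,\frac1i\sum_n V_nC_nB_{n-1}\Sigma_e$; the ``if'' direction ($\alpha=0$) is fine, since then the paper's stability argument (its Theorem on bounded autocorrelations of $(x_n,\bar\delta_n,\hat\delta_n,\xi_n)$) supplies the uniform second moments your Chebyshev step needs. The genuine gap is in the ``only if'' direction: there you also need $\frac1i\sum_n V_nR_ne_{n-1}^\T\to 0$ for the actual $\alpha\neq 0$ under test, and your justification rests on ``uniform second-moment bounds on the closed-loop signals.'' Those bounds are not implied by the assumptions when $\alpha\neq0$: only $\|A_n+B_nK_n\|<1$ and $\|A_n+L_nC_n\|<1$ are assumed, and the attacked loop couples the observer to the true state through $-(1+\alpha)L_nC_nx_n$, so the joint $(x_n,\hat x_n)$ dynamics can be unstable and $\mathds{E}\|R_n\|^2$ can grow without bound. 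You acknowledge this (``where a large $|\alpha|$ drives the true state unstable, where \eqref{eq:ltv_watermark_correlation_test} must instead be shown to fail directly''), but that deferred case is precisely the crux, not a routine check: an unbounded state does not by itself prevent the empirical correlation from hovering near zero, so ``fails directly'' still requires an argument, and none is given. The paper's proof is structured specifically to avoid needing these moment bounds: after the same one-step expansion it never proves the random part converges, but instead notes each summand $V_n(C_n\delta_n-\alpha C_n(A_{n-1}x_{n-1}+B_{n-1}K_{n-1}\hat x_{n-1}+w_{n-1}))e_{n-1}^\T$ is symmetric about zero whatever its scale, and uses that to argue the partial sum is, with probability at least one half, no closer to zero than the deterministic offset, so \eqref{eq:ltv_watermark_correlation_test} fails whenever that offset does not vanish --- an argument insensitive to instability of the attacked state.

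A secondary caution: your final step asserts that the two-sided bounds $V_n\preceq\eta_V I_r$ and $V_n\succeq(\eta_C^2\eta_\delta+\eta_z)^{-1/2}I_r$ together with \eqref{eq:consistently_observervable_condition} force $\frac1i\sum_n V_nC_nB_{n-1}\Sigma_e$ to stay away from zero. A uniform positive-definite lower bound on the weights does not give this in general: a time-varying weighting can cancel a sign- or direction-varying sequence whose unweighted average is nonzero (already in the scalar case, weights in $\{1,2\}$ can annihilate the average of a sequence alternating between $2$ and $-1$). The paper treats this point lightly as well, so it is a shared soft spot rather than a defect unique to your argument, but if you keep your route you should either strengthen the assumption to the $V_n$-weighted average or prove the weighted nonvanishing from the structure of $V_n$.
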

\begin{proof} (Theorem \ref{thm:alpha})
Assume that $\alpha$ is equal to 0.
Rearranging the LHS of (\ref{eq:ltv_watermark_correlation_test}) using \eqref{eq:output_equation}, \eqref{eq:observer_error_update_full}, and \eqref{eq:attack_definition} results in 
\begin{align}
    &\plim{i}~\textstyle\frac{1}{i}\sum_{n=0}^{i-1} V_n(C_n\hat{x}_n-y_n)e_{n-1}^\T=\nonumber\\
    &~=\plim{i}~\textstyle\frac{1}{i}\sum_{n=0}^{i-1} V_n(C_n\delta_n-z_n-C_n\xi_n-\zeta_n)e_{n-1}^\T.\label{eq:alpha_C1_first_expand_a0}
\end{align}
% \pedro{I believe $V_n$ is missing in the above and in the rest of this proof}
% \mattp{I feel like I noticed this a while ago but I must have forgotten to fix it. Thanks!}
Corollary \ref{cor:combine_prob_limit} says that to show that the RHS of \eqref{eq:alpha_C1_first_expand_a0} converges in probability to $0_{r\times q}$, it is sufficient to show that each term in the sum converges in probability to $0_{r\times q}$. 
Note that 
\begin{align}
    &\plim{i}~\textstyle\frac{1}{i}\sum_{n=0}^{i-1} V_n(C_n\delta_n-C_n\xi_n)e_{n-1}^\T
    % =\nonumber\\
    % &=\plim{i}~\textstyle\frac{1}{i}\sum_{n=0}^{i-1} \begin{bmatrix}0_{r\times p} & C_n & C_n & -C_n\end{bmatrix}\begin{bmatrix}x_n\\\bar{\delta}_n\\\hat{\delta}_n\\\xi_n\end{bmatrix}e_{n-1}^\T=\nonumber\\
    % &
    =0_{r\times q}
\end{align}
by Corollary \ref{cor:converge_expected_cor} since $e_{n-1}$ is independent identically distributed with bounded covariance, and $V_n(C_n\delta_n-C_n\xi_n)$ is a bounded linear transform of a random vector that satisfies the necessary auto correlation bound as a result of Theorem \ref{thm:alpha_zero_bounded_state}. 
Similarly,
\begin{align}
    \plim{i}~\textstyle\frac{1}{i}\sum_{n=0}^{i-1} V_n(-z_n-\zeta_n)e_{n-1}^\T=0_{r\times q}\label{eq:alpha_measure_noise_cancel}
\end{align}
by Corollary \ref{cor:converge_expected_cor} since $z_n$, $\zeta_n$, and $e_{n-1}$ are mutually independent identically distributed with bounded covariances. 
Therefore $\alpha=0$ implies \eqref{eq:ltv_watermark_correlation_test} holds.
% \pedro{I think there might be an inconsistency with Corollary A.7. There we have a bound on $E[f_{j}g_{j+1}^{\top}]$, leading to a bound on the sum over $f_{j}g_{j}^{\top}$. But here the indexing is reversed. I think the argument is correct, it is just the indexing that is a bit messed up.}
% \mattp{I think I fixed the issue by adjusting the corollary}

Now assume that \eqref{eq:ltv_watermark_correlation_test} holds. 
Rearranging (\ref{eq:ltv_watermark_correlation_test}) using \eqref{eq:output_equation}, \eqref{eq:observer_error_update_full}, and \eqref{eq:attack_definition} results in 
\begin{align}
    \plim{i}~&\textstyle\frac{1}{i}\sum_{n=0}^{i-1} V_n(C_n\hat{x}_n-y_n)e_{n-1}^\T=\nonumber\\
    &=\plim{i}~\textstyle\frac{1}{i}\sum_{n=0}^{i-1} V_n(C_n\delta_n-(1+\alpha)z_n+\nonumber\\
    &\hspace{0.8in}-\alpha C_nx_n-C_n\xi_n-\zeta_n)e_{n-1}^\T.\label{eq:alpha_C1_first_expand}
\end{align}
Now since \eqref{eq:alpha_measure_noise_cancel} holds by the same argument as before, we can use Theorem \ref{thm:split_prob_limit} to cancel these terms resulting in 
\begin{align}
    \plim{i}~&\textstyle\frac{1}{i}\sum_{n=0}^{i-1} V_n(C_n\hat{x}_n-y_n)e_{n-1}^\T=\nonumber\\
    &=\plim{i}~\textstyle\frac{1}{i}\sum_{n=0}^{i-1} V_n(C_n\delta_n-\alpha C_nx_n)e_{n-1}^T.\label{eq:alpha_C1_first_cancel}
\end{align}
Expanding $x_n$ in \eqref{eq:alpha_C1_first_cancel} by one step using \eqref{eq:state_update} then results in 
\begin{align}
    &\plim{i}~\textstyle\frac{1}{i}\sum_{n=0}^{i-1} V_n(C_n\hat{x}_n-y_n)e_{n-1}^\T=\nonumber\\
    &~=\plim{i}~\textstyle\frac{1}{i}\sum_{n=0}^{i-1} V_n(C_n\delta_n-\alpha C_n(A_{n-1}x_{n-1}+\nonumber\\
    &~\quad+B_{n-1}K_{n-1}\hat{x}_{n-1}+B_{n-1}e_{n-1}+w_{n-1}))e_{n-1}^\T.\label{eq:alpha_C1_second_expand}
\end{align}
Using Corollary \ref{cor:converge_expected_cor} we have that 
\begin{align}
    \plim{i}~&\textstyle\frac{1}{i}\sum_{n=0}^{i-1}-\alpha V_nC_nB_{n-1}(e_{n-1}e_{n-1}^\T-\Sigma_e)=0_{q\times r}.
\end{align}
Therefore by Theorem \ref{thm:split_prob_limit} we have
\begin{align}
    &\plim{i}~\textstyle\frac{1}{i}\sum_{n=0}^{i-1} V_n(C_n\hat{x}_n-y_n)e_{n-1}^\T=\nonumber\\
    &~=\plim{i}~\textstyle\frac{1}{i}\sum_{n=0}^{i-1} V_n(C_n\delta_n-\alpha C_n(A_{n-1}x_{n-1}+\nonumber\\
    &~\quad+B_{n-1}K_{n-1}\hat{x}_{n-1}+w_{n-1}))e_{n-1}^\T+\alpha V_nC_nB_{n-1}\Sigma_e.\label{eq:alpha_C1_second_cancel}
\end{align}
Note, that all elements of
\begin{align}
    V_n(C_n\delta_n-\alpha C_n(&A_{n-1}x_{n-1}+\nonumber\\
    &+B_{n-1}K_{n-1}\hat{x}_{n-1}+w_{n-1}))e_{n-1}^\T
\end{align}
are distributed symmetrically about 0 for all $n\in\mathbb{N}$.
Consider an element of \eqref{eq:alpha_C1_second_cancel} for which the corresponding element in
\begin{align}\label{eq:C1_non_converge}
    \textstyle\frac{1}{i}\sum_{n=0}^{i-1} V_nC_nB_{n-1}\Sigma_e
\end{align}
does not converge.
For each $i$, the probability that the matrix element in \eqref{eq:alpha_C1_second_cancel} is farther away from 0 than the corresponding element in \eqref{eq:C1_non_converge} is at least $0.5$.
Therefore the element cannot converge in probability to 0 completing the proof.
% \pedro{Here I return my question about Assumption III.2. Eq.(34) may have zero elements even if the sum of the $C_nB_n$ converges to some non-zero matrix (the example being the case when $B_n$ is rank-deficient.}
% \mattp{See my comment for Assumption III.2.}
% \pedro{In addition, I did not quite understand the argument from Eq.(33) and Eq.(34).}
% \mattp{I attempted to add some clarification}
% \pedro{How is (33) symmetric about zero? Do you mean the distribution is symmetric? I think it would be good to clarify this last paragraph of the argument}
% \mattp{I added a few words to specify its distribution is symmetric}
\end{proof}

Assuming $\alpha$ is equal to 0, we show that \eqref{eq:ltv_covariance_test} is equivalent to another condition that is only dependent on the attack $v_n$ and its contribution to the observer error $\delAttack_n$. 
Note, $\delAttack_n$ is not a computable quantity given the available knowledge of the system, but the provided intermediate condition is an amenable surrogate to \eqref{eq:ltv_covariance_test}.
\begin{theorem}\label{thm:equiv_attack_covariance}
Consider an attacked LTV system satisfying \eqref{eq:state_update}-\eqref{eq:observer_error_update_attack} and an attack model satisfying \eqref{eq:attack_definition}-\eqref{eq:false_state_update}.
Let $V_n$ be as defined in \eqref{eq:residual_normalizer}.
Assume the attack scaling factor $\alpha$ is equal to 0.
\eqref{eq:ltv_covariance_test} holds if and only if 
\begin{align}
    \plim{i}~\textstyle\frac{1}{i}\sum_{n=0}^{i-1} V_n(C_n\delAttack_n-v_n)(C_n\delAttack_n-v_n)^\T V_n^\T =0_r.\label{eq:attack_additive_covariance}
\end{align}
\end{theorem}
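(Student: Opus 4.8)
The plan is to decompose the measurement residual into a nominal part and an attack part and then expand the quadratic form in \eqref{eq:ltv_covariance_test}. Writing $\hat x_n = x_n+\delta_n$ and using \eqref{eq:output_equation} gives $C_n\hat x_n-y_n = C_n\delta_n-z_n-v_n$, and $\delta_n=\bar\delta_n+\delAttack_n$ then splits this as $C_n\hat x_n-y_n = a_n+b_n$ with $a_n := C_n\bar\delta_n-z_n$ and $b_n := C_n\delAttack_n-v_n$; note that $b_n$ is exactly the vector appearing in \eqref{eq:attack_additive_covariance}. Expanding the outer product,
\begin{align}
  \textstyle\frac1i\sum_{n=0}^{i-1}V_n(C_n\hat x_n-y_n)(C_n\hat x_n-y_n)^\T V_n^\T &= \textstyle\frac1i\sum_{n=0}^{i-1}V_na_na_n^\T V_n^\T+\textstyle\frac1i\sum_{n=0}^{i-1}V_na_nb_n^\T V_n^\T\nonumber\\
  &\quad+\textstyle\frac1i\sum_{n=0}^{i-1}V_nb_na_n^\T V_n^\T+\textstyle\frac1i\sum_{n=0}^{i-1}V_nb_nb_n^\T V_n^\T,\nonumber
\end{align}
so it suffices to identify the probability limit of each of the four Cesàro averages on the right.

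For the first term, note that $\bar\delta_n$ is generated by \eqref{eq:observer_error_update_normal} from $\{w_j,z_j\}_{j<n}$, hence is independent of $z_n$; since both are zero mean, $\mathds{E}[a_na_n^\T]=C_n\Sigma_{\delta,n}C_n^\T+\Sigma_{z,n}$, which by \eqref{eq:residual_normalizer} equals $V_n^{-1}V_n^{-\T}$, so that $\mathds{E}[V_na_na_n^\T V_n^\T]=I_r$ for every $n$. Because $\AL_n$ is a contraction and all noise covariances are uniformly bounded (Assumptions \ref{asmp:bound1} and \ref{asmp:obs_bounds}), the sequence $\{V_na_n\}$ is a uniformly bounded linear image of a process with geometrically decaying autocorrelation, so Corollaries \ref{cor:converge_expected_cor} and \ref{cor:combine_prob_limit} give $\plim{i}~\textstyle\frac1i\sum_{n=0}^{i-1}V_na_na_n^\T V_n^\T=I_r$.

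For the two cross terms I would use that, since $\alpha=0$, the attack \eqref{eq:attack_definition} reduces to $v_n=C_n\xi_n+\zeta_n$, so $v_n$, and hence $\delAttack_n$ via \eqref{eq:false_state_update} and \eqref{eq:observer_error_update_attack}, is a function of $\{\omega_j,\zeta_j\}_j$ alone; therefore the sequence $\{b_n\}$ is independent of $\{a_n\}$, which is a function of $\{w_j,z_j\}_j$ alone. Since $a_n$ is zero mean, $\mathds{E}[a_nb_m^\T]=\mathds{E}[a_n]\mathds{E}[b_m^\T]=0_r$ for all $n,m$, and both $\{V_na_n\}$ and $\{V_nb_n\}$ are uniformly bounded in covariance with geometrically decaying autocorrelation (using $\|\AK_n\|<1$, $\|\AL_n\|<1$, the bounded attack covariances, and the state bounds supplied by Theorem \ref{thm:alpha_zero_bounded_state}); hence by Corollary \ref{cor:converge_expected_cor} both $\plim{i}~\textstyle\frac1i\sum_{n=0}^{i-1}V_na_nb_n^\T V_n^\T$ and $\plim{i}~\textstyle\frac1i\sum_{n=0}^{i-1}V_nb_na_n^\T V_n^\T$ equal $0_r$.

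Finally, assembling the pieces with Theorem \ref{thm:split_prob_limit}: if \eqref{eq:ltv_covariance_test} holds, subtracting the $I_r$ limit of the first term and the $0_r$ limits of the two cross terms from the displayed identity leaves $\plim{i}~\textstyle\frac1i\sum_{n=0}^{i-1}V_nb_nb_n^\T V_n^\T=0_r$, which is \eqref{eq:attack_additive_covariance} since $b_n=C_n\delAttack_n-v_n$; conversely, if \eqref{eq:attack_additive_covariance} holds, adding the three identified limits back recovers $I_r$, i.e.\ \eqref{eq:ltv_covariance_test}. I expect the main obstacle to be not this bookkeeping but verifying the hypotheses of the appendix corollaries uniformly in $n$ — establishing the geometric decay of the autocorrelations of $\bar\delta_n$, $\delAttack_n$, and $\xi_n$ from the contraction and bounded-covariance assumptions, and carefully tracking which driving noises enter $a_n$ and $b_n$ so that their independence (which relies crucially on $\alpha=0$, since otherwise $v_n$ would carry the term $\alpha C_n x_n$ that couples to $\{w_j,z_j\}$) is rigorously justified.
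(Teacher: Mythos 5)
Your proposal is correct and follows essentially the same route as the paper's proof: the same decomposition of the residual into $C_n\bar\delta_n-z_n$ and $C_n\delAttack_n-v_n$, the same identification of the $I_r$ limit (via the definition of $V_n$) and vanishing cross terms (via uncorrelatedness of the nominal and attack-driven noises, justified through Corollary \ref{cor:converge_expected_cor} and Theorem \ref{thm:alpha_zero_bounded_state}), and the same use of Theorem \ref{thm:split_prob_limit} and Corollary \ref{cor:combine_prob_limit} for the two directions of the equivalence.
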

\begin{proof} (Theorem \ref{thm:equiv_attack_covariance})
Expanding \eqref{eq:ltv_covariance_test} using \eqref{eq:output_equation} and \eqref{eq:observer_error_update_full}-\eqref{eq:observer_error_update_attack} gives us 
\begin{align}
    \plim{i}~&\textstyle\frac{1}{i}\sum_{n=0}^{i-1} V_n(C_n\hat{x}_n-y_n)(C_n\hat{x}_n-y_n)^\T V_n^\T=\nonumber\\
    &=\plim{i}~\textstyle\frac{1}{i}\sum_{n=0}^{i-1}V_n(C_n\bar{\delta}_n-z_n)(C_n\bar{\delta}_n-z_n)^\T V_n^\T+\nonumber\\
    &\qquad+V_n(C_n\bar{\delta}_n-z_n)(C_n\delAttack_n-v_n)^\T V_n^\T+\nonumber\\
    &\qquad+V_n(C_n\delAttack_n-v_n)(C_n\bar{\delta}_n-z_n)^\T V_n^\T+\nonumber\\
    &\qquad+V_n(C_n\delAttack_n-v_n)(C_n\delAttack_n-v_n)^\T V_n^\T.\label{eq:equiv_attack_covariance_expand}
\end{align}
By Corollary \ref{cor:converge_expected_cor} and Theorem \ref{thm:alpha_zero_bounded_state},
\begin{align}
    \plim{i}~&\textstyle\frac{1}{i}\sum_{n=0}^{i-1} V_n(C_n\bar{\delta}_n-z_n)(C_n\bar{\delta}_n-z_n)^\T V_n^\T=I_r\label{eq:equiv_attack_covariance_converge_I}
\end{align}
% since $\|V_n\|<\eta_V$ and $\|V_nC_n\|<\eta_V\eta_C$ are bounded transforms, $z_n$ are independent identically distributed and mutually independent with $\bar{\delta}_n$, and $\bar{\delta}_n=\begin{bmatrix}0&I&0&0\end{bmatrix}\begin{bmatrix}x_n^\T&\bar{\delta}_n^\T&\delAttack_n^\T&\xi_n^\T\end{bmatrix}^\T$ satisfies the necessary bound as a result of Theorem \ref{thm:alpha_zero_bounded_state}.
and
\begin{align}
    \plim{i}~&\textstyle\frac{1}{i}\sum_{n=0}^{i-1} V_n(C_n\bar{\delta}_n-z_n)(C_n\delAttack-v_n)^\T V_n^\T=0_{r}\label{eq:equiv_attack_covariance_cross}
\end{align}
since, by the definition of $V_n$ in \eqref{eq:residual_normalizer}, the expectation for each summand in \eqref{eq:equiv_attack_covariance_converge_I} is $I_r$, and $V_n(C_n\bar{\delta}_n-z_n)$ is uncorrelated with $V_n(C_n\delAttack-v_n)$.
% \pedro{Don't we need to subtract some expectations in using Corollary A.7 to show (38)? The expectation of $\bar{\delta}_{n}\hat{\delta}_{n}^{\top}$ in particular is not $0_n$.}
% \mattp{The expectation of $\bar{\delta}_{n}\hat{\delta}_{n}^{\top}$ is zero since $\hat{\delta}_{n}$, when $\alpha=0$ is a function of $\omega_n,\zeta_n,~n=0,\hdots,n-1$ and $\bar{\delta}_{n}$ is a function of $w_n,z_n,~n=0,\hdots,n-1$.}
% \pedro{Also I am not sure I follow why Corollary A.7 imply (37) as well.}
% \mattp{The expectation of $(C_n\bar{\delta}_n-z_n)(C_n\bar{\delta}_n-z_n)^\T $ is $(C_n\Sigma_{\delta,n}+\Sigma_{z,n})$. Then since $V_n=(C_n\Sigma_{\delta,n}+\Sigma_{z,n})^{-1/2}$, the result is an expectation of $I_r$ which instead of subtracting I put it on the other side of the equals sign. I added a few notes at the end to clarify.}
% \pedro{Lastly some indices are missing in (36)}
% \mattp{fixed this thanks}
First, assume that \eqref{eq:ltv_covariance_test} holds.
By Theorem \ref{thm:split_prob_limit}, it follows from \eqref{eq:equiv_attack_covariance_expand}-\eqref{eq:equiv_attack_covariance_cross} that \eqref{eq:attack_additive_covariance} must hold. 
Next, assume that \eqref{eq:attack_additive_covariance} holds.
By Corollary \ref{cor:combine_prob_limit}, it follows from \eqref{eq:equiv_attack_covariance_expand}-\eqref{eq:equiv_attack_covariance_cross} that \eqref{eq:ltv_covariance_test} holds.
\end{proof}

Since the attack $v_n$, under the assumption that $\alpha$ is equal to 0, is only dependent on the random vectors $\xi_n$ and $\zeta_n$, we now provide sufficient and necessary conditions on these random vectors for the asymptotic attack power to be 0.
Similar to Theorem \ref{thm:equiv_attack_covariance}, these random vectors are not computable by the controller, but the resulting conditions can be used to connect \eqref{eq:ltv_covariance_test} to the asymptotic attack power.
\begin{theorem}\label{thm:att_equiv} 
Consider an attacked LTV system satisfying \eqref{eq:state_update}-\eqref{eq:observer_error_update_attack} and an attack model satisfying \eqref{eq:attack_definition}-\eqref{eq:false_state_update}.
Assume that the attack scaling factor $\alpha$ is equal to 0.
The asymptotic attack power as defined in \eqref{eq:attack_power} is 0 if and only if
\begin{align}
    \plim{i}\textstyle\frac{1}{i}\sum_{n=0}^{i-1}\zeta_n\zeta_n^\T=0\label{eq:att_equiv_attack_noise_claim}
\end{align}
and
\begin{align}
    \plim{i}\textstyle\frac{1}{i}\sum_{n=0}^{i-1}C_n\xi_n\xi_n^\T C_n^\T=0_r\label{eq:att_equiv_attack_state_claim}.
\end{align}
\end{theorem}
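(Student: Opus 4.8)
The plan is to use the hypothesis $\alpha=0$ to reduce the attack equation \eqref{eq:attack_definition} to $v_n=C_n\xi_n+\zeta_n$, and then to expand the partial sums of $v_n^\T v_n$. Since
\[
v_n^\T v_n = \text{tr}\!\left(C_n\xi_n\xi_n^\T C_n^\T\right) + 2\,\zeta_n^\T C_n\xi_n + \text{tr}\!\left(\zeta_n\zeta_n^\T\right),
\]
setting $P_i=\text{tr}\!\left(\frac{1}{i}\sum_{n=0}^{i-1}C_n\xi_n\xi_n^\T C_n^\T\right)$, $Q_i=\text{tr}\!\left(\frac{1}{i}\sum_{n=0}^{i-1}\zeta_n\zeta_n^\T\right)$, and $R_i=\frac{2}{i}\sum_{n=0}^{i-1}\zeta_n^\T C_n\xi_n$ gives $\frac{1}{i}\sum_{n=0}^{i-1}v_n^\T v_n = P_i+Q_i+R_i$, with $P_i\geq 0$ and $Q_i\geq 0$ because the matrices inside the two traces are sums of positive semidefinite rank-one terms. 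Moreover, for a positive semidefinite matrix the operator norm is bounded by the trace, so $\plim{i}P_i=0$ is equivalent to \eqref{eq:att_equiv_attack_state_claim} and $\plim{i}Q_i=0$ is equivalent to \eqref{eq:att_equiv_attack_noise_claim}. Hence it suffices to show the asymptotic attack power \eqref{eq:attack_power} is $0$ if and only if $\plim{i}P_i=0$ and $\plim{i}Q_i=0$.

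The central intermediate step is to show the cross term satisfies $\plim{i}R_i=0$. The sequence $\{\zeta_n\}$ is independent of $\{\omega_n\}$ and hence of $\{\xi_n\}$, each $\zeta_n$ has zero mean and covariance bounded by $\eta_\zeta$, and, using $\|C_n\|<\eta_C$ from Assumption \ref{asmp:bound1} together with the uniform bound on the false state $\xi_n$ of \eqref{eq:false_state_update} (which follows from $\|\AK_n\|<\eta_{\AK}<1$ and the bounded attack-noise covariance, and is recorded in Theorem \ref{thm:alpha_zero_bounded_state}), the vectors $C_n\xi_n$ form a bounded linear image of a process obeying the auto-correlation bound required by Corollary \ref{cor:converge_expected_cor}. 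Applying that corollary, equivalently noting that the scalar summands $\zeta_n^\T C_n\xi_n$ are pairwise uncorrelated, zero-mean, and of uniformly bounded variance so that $R_i\to 0$ in $L^2$, yields $\plim{i}R_i=0$; this is the same device used for the cross terms in the proof of Theorem \ref{thm:alpha}.

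With $\plim{i}R_i=0$ in hand, both implications are short. If $\plim{i}P_i=0$ and $\plim{i}Q_i=0$, then $\plim{i}\big(P_i+Q_i+R_i\big)=0$ by Corollary \ref{cor:combine_prob_limit}, i.e.\ the asymptotic attack power is $0$. Conversely, if the asymptotic attack power is $0$, then $\plim{i}\big(P_i+Q_i\big)=\plim{i}\big(\frac{1}{i}\sum_{n=0}^{i-1}v_n^\T v_n - R_i\big)=0$ by Theorem \ref{thm:split_prob_limit}; since $P_i\geq 0$ and $Q_i\geq 0$, the bounds $\mathds{P}(P_i>\epsilon)\leq\mathds{P}(P_i+Q_i>\epsilon)$ and $\mathds{P}(Q_i>\epsilon)\leq\mathds{P}(P_i+Q_i>\epsilon)$ for every $\epsilon>0$ force $\plim{i}P_i=0$ and $\plim{i}Q_i=0$, which by the equivalence above are exactly \eqref{eq:att_equiv_attack_state_claim} and \eqref{eq:att_equiv_attack_noise_claim}.

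I expect the cross-term estimate $\plim{i}R_i=0$ to be the main obstacle: it is the only place the probabilistic structure is genuinely used (independence of $\zeta_n$ from the false-state noise, zero means, and the uniform covariance and auto-correlation bounds coming from Assumption \ref{asmp:bound1}, the attack-model assumption, and Theorem \ref{thm:alpha_zero_bounded_state}), whereas the remainder is algebraic manipulation plus the elementary observation that two nonnegative sequences whose sum converges in probability to $0$ each converge in probability to $0$. A secondary point that needs care is that the $\zeta_n$ are independent but, since $\Sigma_{\zeta,n}$ varies with $n$, not identically distributed, so the convergence result invoked must be applied in the form that only requires uniformly bounded covariances rather than identical ones.
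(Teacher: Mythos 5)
Your proposal is correct and takes essentially the same route as the paper's proof: reduce to $v_n=C_n\xi_n+\zeta_n$, kill the cross term in probability using the independence of $\zeta_n$ from the false-state process together with the boundedness provided by Theorem \ref{thm:alpha_zero_bounded_state}, and then split the remaining sum of nonnegative terms, invoking Corollary \ref{cor:combine_prob_limit} and Theorem \ref{thm:split_prob_limit} for the two directions. The only cosmetic difference is that you do the bookkeeping at the scalar (trace) level throughout, whereas the paper first converts the power condition to a matrix condition via Lemma \ref{lm:mat_scal} and argues with positive-semidefinite matrix sums; the underlying ingredients are identical.
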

\begin{proof}(Theorem \ref{thm:att_equiv})
Assume that $\alpha=0$. 
Using Lemma \ref{lm:mat_scal} we have that the asymptotic attack power is 0 if and only if
\begin{align}
    \plim{i}&\textstyle\frac{1}{i}\sum_{n=0}^{i-1}v_nv_n^\T=0_{r}.\label{eq:attack_equiv_power_mat}
\end{align}
Expanding the LHS of \eqref{eq:attack_equiv_power_mat} using \eqref{eq:attack_definition}-\eqref{eq:false_state_update} we get an equvalent condition. 
\begin{align}
    &\plim{i}\textstyle\frac{1}{i}\sum_{n=0}^{i-1}C_n\xi_n\xi_n^\T C_n^\T+\nonumber\\
    &\quad+C_n\xi_n\zeta_n^\T+(C_n\xi_n\zeta_n^\T)^\T+\zeta_n\zeta_n^\T=0_r\label{eq:att_equiv_expand}
\end{align}
Since $\xi_n$ and $\zeta_n$ are uncorrelated, from Theorem \ref{thm:alpha_zero_bounded_state} and Corollary \ref{cor:converge_expected_cor} we have
\begin{align}
    \plim{i}&\textstyle\frac{1}{i}\sum_{n=0}^{i-1} C_n\xi_n\zeta_n^\T=0_r.\label{eq:att_equiv_cross_cancel}
\end{align}
% \pedro{Some $C_n$'s are missing}
% \mattp{$v_n=C_n\xi_n+\zeta_n$ so the cross terms only have a $C_n$ on one side the final term in \eqref{eq:att_equiv_expand} has none.}
First, assume that \eqref{eq:att_equiv_attack_noise_claim} and \eqref{eq:att_equiv_attack_state_claim} hold. 
By Corollary \ref{cor:combine_prob_limit} we have that \eqref{eq:att_equiv_expand} must hold since, when separated, the limit for each term converges to $0_r$. 
Next, assume that \eqref{eq:att_equiv_expand} holds.
By Theorem \ref{thm:split_prob_limit} we can rewrite \eqref{eq:att_equiv_expand} as 
\begin{align}
    \plim{i}\textstyle\frac{1}{i}\sum_{n=0}^{i-1}\zeta_n\zeta_n^\T+C_n\xi_n\xi_n^\T C_n^\T=0_r,\label{eq:att_equiv_canceled_cross}
\end{align}
since \eqref{eq:att_equiv_cross_cancel} holds.
Note, both terms are positive-semidefinite matrices.
Therefore, for an arbitrary $\epsilon>0$ we have that 
\begin{align}
    \mathds{P}&\left(\left\|\textstyle\frac{1}{i}\sum_{n=0}^{i-1}\zeta_n\zeta_n^\T\right\|>\epsilon\right)\leq\nonumber\\
    &\qquad\leq\mathds{P}\left(\left\|\textstyle\frac{1}{i}\sum_{n=0}^{i-1}\zeta_n\zeta_n^\T+C_n\xi_n\xi_n^\T C_n^\T\right\|>\epsilon\right)\label{eq:inclusion_bound}
\end{align}
% and
% \begin{align}
%     \mathds{P}&\left(\left\|\textstyle\frac{1}{i}\sum_{n=0}^{i-1}C_n\xi_n\xi_n^\T C_n^\T\right\|>\epsilon\right)\leq\nonumber\\
%     &\qquad\leq\mathds{P}\left(\left\|\textstyle\frac{1}{i}\sum_{n=0}^{i-1}\zeta_n\zeta_n^\T+C_n\xi_n\xi_n^\T C_n^\T\right\|>\epsilon\right).
% \end{align}
Furthermore, \eqref{eq:att_equiv_canceled_cross} implies
\begin{align}
    \lim_{i\to\infty}\mathds{P}\left(\left\|\textstyle\frac{1}{i}\sum_{n=0}^{i-1}\zeta_n\zeta_n^\T+C_n\xi_n\xi_n^\T C_n^\T\right\|>\epsilon\right)=0_r,~\forall\epsilon>0\label{eq:whole_prob}
\end{align}
Then, by \eqref{eq:inclusion_bound} and \eqref{eq:whole_prob}
\begin{align}
    \lim_{i\to\infty}\mathds{P}\left(\left\|\textstyle\frac{1}{i}\sum_{n=0}^{i-1}\zeta_n\zeta_n^\T\right\|>\epsilon\right)=0_r,~\forall\epsilon>0.
\end{align}
% and
% \begin{align}
%     \lim_{i\to\infty}\mathds{P}&\left(\left\|\textstyle\frac{1}{i}\sum_{n=0}^{i-1}C_n\xi_n\xi_n^\T C_n^\T\right\|>\epsilon\right)=0_r
% \end{align}
% for all $\epsilon>0$. 
Therefore, \eqref{eq:att_equiv_attack_noise_claim} must hold.
Applying Theorem \ref{thm:split_prob_limit} to \eqref{eq:att_equiv_canceled_cross} using \eqref{eq:att_equiv_attack_noise_claim} implies \eqref{eq:att_equiv_attack_state_claim} must also hold.
\end{proof}

Next, we start to complete the connection between \eqref{eq:ltv_covariance_test} and zero asymptotic attack power by proving \eqref{eq:attack_additive_covariance} implies \eqref{eq:att_equiv_attack_noise_claim}.
Furthermore, we prove a related result that makes it simpler to prove that \eqref{eq:attack_additive_covariance} implies \eqref{eq:att_equiv_attack_state_claim}. 
\begin{theorem}\label{thm:me_noise}
Consider an attacked LTV system satisfying \eqref{eq:state_update}-\eqref{eq:observer_error_update_attack} and an attack model satisfying \eqref{eq:attack_definition}-\eqref{eq:false_state_update}.
Let $V_n$ be as defined in \eqref{eq:residual_normalizer}.
Assume the attack scaling factor $\alpha$ is equal to 0.
If \eqref{eq:attack_additive_covariance} holds, then \eqref{eq:att_equiv_attack_noise_claim} holds as well and
\begin{align}
    \plim{i}\textstyle\frac{1}{i}\sum_{n=0}^{i-1}(C_n\delAttack_n-C_n\xi_n)(C_n\delAttack_n-C_n\xi_n)^\T=0_r.\label{eq:me_noise_state_noise_claim}
\end{align}
\end{theorem}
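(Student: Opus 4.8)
The plan is to expand the quadratic form inside \eqref{eq:attack_additive_covariance}, remove the matrix normalization factor $V_n$ using a uniform lower bound on its smallest eigenvalue, observe that the attack measurement noise $\zeta_n$ is statistically fresh with respect to everything appearing in $C_n\delAttack_n-C_n\xi_n$ so that the cross terms vanish by Corollary \ref{cor:converge_expected_cor}, and finally split the remaining probability limit of a sum of two positive semidefinite matrix averages into its two pieces.

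\emph{Removing $V_n$.} By Assumptions \ref{asmp:bound1} and \ref{asmp:obs_bounds}, $\|C_n\Sigma_{\delta,n}C_n^\T+\Sigma_{z,n}\|\le\eta_C^2\eta_\delta+\eta_z$, and since $V_n$ is symmetric, the smallest eigenvalue of $V_n^\T V_n=(C_n\Sigma_{\delta,n}C_n^\T+\Sigma_{z,n})^{-1}$ is at least $(\eta_C^2\eta_\delta+\eta_z)^{-1}$, so $\|V_nu\|^2\ge(\eta_C^2\eta_\delta+\eta_z)^{-1}\|u\|^2$ for every vector $u$. Taking the trace of the summands in \eqref{eq:attack_additive_covariance} and using that $\frac{1}{i}\sum_{n=0}^{i-1}V_n(C_n\delAttack_n-v_n)(C_n\delAttack_n-v_n)^\T V_n^\T$ tends to $0_r$ in probability, I obtain $\plim{i}~\frac{1}{i}\sum_{n=0}^{i-1}\|C_n\delAttack_n-v_n\|^2=0$. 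Since each matrix $(C_n\delAttack_n-v_n)(C_n\delAttack_n-v_n)^\T$ is positive semidefinite with spectral norm $\|C_n\delAttack_n-v_n\|^2$, this gives
\begin{align}
    \plim{i}~\textstyle\frac{1}{i}\sum_{n=0}^{i-1}(C_n\delAttack_n-v_n)(C_n\delAttack_n-v_n)^\T=0_r.\nonumber
\end{align}

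\emph{Killing the cross terms.} With $\alpha=0$, \eqref{eq:attack_definition} reads $v_n=C_n\xi_n+\zeta_n$, so writing $a_n:=C_n\delAttack_n-C_n\xi_n$ we have $C_n\delAttack_n-v_n=a_n-\zeta_n$ and
\begin{align}
    (C_n\delAttack_n-v_n)(C_n\delAttack_n-v_n)^\T=a_na_n^\T-a_n\zeta_n^\T-\zeta_na_n^\T+\zeta_n\zeta_n^\T.\nonumber
\end{align}
From \eqref{eq:false_state_update}, $\xi_n$ is a function of $\omega_0,\dots,\omega_{n-1}$ only, and from \eqref{eq:observer_error_update_attack} with $\alpha=0$, $\delAttack_n$ is a function of $v_0,\dots,v_{n-1}$, hence of $\omega_0,\dots,\omega_{n-2}$ and $\zeta_0,\dots,\zeta_{n-1}$ only; therefore $a_n$ is independent of $\zeta_n$. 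Since $\zeta_n$ is IID with bounded covariance and $a_n$ is a bounded linear transform of $(\delAttack_n^\T,\xi_n^\T)^\T$, which satisfies the required autocorrelation bound by Theorem \ref{thm:alpha_zero_bounded_state}, Corollary \ref{cor:converge_expected_cor} yields $\plim{i}~\frac{1}{i}\sum_{n=0}^{i-1}a_n\zeta_n^\T=0_r$ (the summand expectations vanish because $a_n$ and $\zeta_n$ are independent with $\zeta_n$ zero mean), and likewise for its transpose. Theorem \ref{thm:split_prob_limit} then lets me drop both cross terms, leaving $\plim{i}~\frac{1}{i}\sum_{n=0}^{i-1}(a_na_n^\T+\zeta_n\zeta_n^\T)=0_r$.

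\emph{Splitting the positive semidefinite pieces.} Both $\frac{1}{i}\sum_{n=0}^{i-1}a_na_n^\T$ and $\frac{1}{i}\sum_{n=0}^{i-1}\zeta_n\zeta_n^\T$ are positive semidefinite and each is dominated in the Loewner order by their sum, so each has spectral norm bounded by that of the sum; the inclusion-of-events argument used for \eqref{eq:inclusion_bound}--\eqref{eq:whole_prob} then forces both averages to converge to $0_r$ in probability. The $\zeta_n$ term is precisely \eqref{eq:att_equiv_attack_noise_claim}, and substituting $a_n=C_n\delAttack_n-C_n\xi_n$ into the other term yields \eqref{eq:me_noise_state_noise_claim}. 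I expect the main obstacle to be the independence bookkeeping that justifies discarding the cross terms --- confirming that $\zeta_n$ enters neither $\delAttack_n$ nor $\xi_n$ --- together with verifying that the quantities fed into Corollary \ref{cor:converge_expected_cor} meet its boundedness hypotheses via Theorem \ref{thm:alpha_zero_bounded_state}; the trace manipulations and the positive semidefinite splitting are routine and mirror the preceding proofs.
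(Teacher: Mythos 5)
Your proof is correct, and it follows the same skeleton as the paper's argument — expand $v_n=C_n\xi_n+\zeta_n$ under $\alpha=0$, kill the cross terms with $\zeta_n$ via Corollary \ref{cor:converge_expected_cor} together with Theorem \ref{thm:alpha_zero_bounded_state}, and separate the two positive-semidefinite averages with the inclusion-of-events argument of \eqref{eq:inclusion_bound}--\eqref{eq:whole_prob} — but it discharges the normalizer $V_n$ differently. The paper keeps $V_n$ throughout, first obtaining \eqref{eq:me_noise_attack_measure_noise} and \eqref{eq:me_noise_attack_state_noise}, and only then strips $V_n$ by a contradiction argument through Lemma \ref{lm:nonconv}, using the eigenvalue lower bound \eqref{eq:eigen_lower_bnd}. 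You instead remove $V_n$ at the outset: the same eigenvalue bound plus a trace estimate gives $\plim{i}\frac{1}{i}\sum_{n}\|C_n\delAttack_n-v_n\|^2=0$, and the Lemma \ref{lm:mat_scal}-style norm bound converts this back to matrix convergence, after which everything downstream is stated for the unnormalized quantities and the conclusions \eqref{eq:att_equiv_attack_noise_claim} and \eqref{eq:me_noise_state_noise_claim} drop out directly. What this buys you is a direct (non-contrapositive) proof that never invokes Lemma \ref{lm:nonconv}, and hence never needs its side hypotheses (full column rank of $R_n$ and the summability condition \eqref{eq:nonconv_original_claim1}); the cost is negligible, since the trace manipulation is exactly the content of that lemma specialized to this setting. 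One point to keep at the level of rigor the paper uses: when you feed the pair $(C_n\delAttack_n-C_n\xi_n,\,\zeta_n)$ into Corollary \ref{cor:converge_expected_cor}, the cross-covariance bound $\|\mathds{E}[a_j\zeta_i^\T]\|\lesssim\eta_{\AL}^{j-i-1}$ for $i<j$ (through the dependence of $\delAttack_j$ on past $\zeta_i$) is what makes the hypothesis \eqref{eq:converge_expected_autocorr_bound2} hold; your independence remark covers only the $i=j$ term, though the paper is equally terse on this point.
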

\begin{proof}(Theorem \ref{thm:me_noise})
Assume that \eqref{eq:attack_additive_covariance} holds. 
Expanding the LHS of \eqref{eq:attack_additive_covariance} using \eqref{eq:attack_definition} we get 
\begin{align}
    &\plim{i}\textstyle\frac{1}{i}\sum_{n=0}^{i-1}V_n(C_n\delAttack_n-C_n\xi_n)(C_n\delAttack_n-C_n\xi_n)^\T V_n^\T+\nonumber\\
    &~~+V_n(C_n\delAttack_n-C_n\xi_n)\zeta_n^\T V_n^\T+(V_n(C_n\delAttack_n-C_n\xi_n)\zeta_n^\T V_n^\T)^\T+\nonumber\\
    &~~+V_n\zeta_n\zeta_n^\T V_n^\T=0_r.\label{eq:expanded_attack_additive_covariance}
\end{align}
Using Corollary \ref{cor:converge_expected_cor} and Theorem \ref{thm:alpha_zero_bounded_state} we have
\begin{align}
    \plim{i}\textstyle\frac{1}{i}\sum_{n=0}^{i-1} V_n(C_n\delAttack_n-C_n\xi_n)\zeta_n^\T V_n^\T=0_r.
\end{align}
Therefore, by applying Theorem \ref{thm:split_prob_limit} to \eqref{eq:expanded_attack_additive_covariance} we have
\begin{align}
    \plim{i}\frac{1}{i}&\textstyle\sum_{n=0}^{i-1}V_n(C_n\delAttack_n-C_n\xi_n)(C_n\delAttack_n-C_n\xi_n)^\T V_n^\T+\nonumber\\
    &\qquad+V_n\zeta_n\zeta_n^\T V_n^\T=0_r.\label{eq:canceled_attack_additive_covariance}
\end{align}
Note, both terms are positive-semidefinite matrices.
Using the same method used on \eqref{eq:att_equiv_canceled_cross}, we then have 
% Furthermore,
% \begin{align}
%     \mathds{P}&\left(\left\|\textstyle\frac{1}{i}\sum_{n=0}^{i-1}V_n\zeta_n\zeta_n^\T V_n^\T\right\|>\epsilon\right)\leq\nonumber\\
%     &\leq\mathds{P}\Big(\Big\|\textstyle\frac{1}{i}\sum_{n=0}^{i-1} V_n(C_n\delAttack_n-C_n\xi_n)(C_n\delAttack_n-C_n\xi_n)^\T V_n^\T+\nonumber\\
%     &\qquad+V_n\zeta_n\zeta_n^\T V_n^\T\Big\|>\epsilon\Big)\label{eq:me_noise_prob_ineq}
% \end{align}
% and
% \begin{align}
%     \mathds{P}&\bigg(\bigg\|\textstyle\frac{1}{i}\sum_{n=0}^{i-1}V_n(C_n\delAttack_n-C_n\xi_n)\times\nonumber\\
%     &\qquad\times(C_n\delAttack_n-C_n\xi_n)^\T V_n^\T\bigg\|>\epsilon\bigg)\leq\nonumber\\
%     &\leq\mathds{P}\Big(\Big\|\textstyle\frac{1}{i}\sum_{n=0}^{i-1} V_n(C_n\delAttack_n-C_n\xi_n)(C_n\delAttack_n-C_n\xi_n)^\T V_n^\T+\nonumber\\
%     &\qquad+V_n\zeta_n\zeta_n^\T V_n^\T\Big\|>\epsilon\Big)\label{eq:me_noise_prob_ineq2}
% \end{align}
% since the summands are positive semi-definite matrices. Then \eqref{eq:canceled_attack_additive_covariance} along with the inequalities in \eqref{eq:me_noise_prob_ineq} and \eqref{eq:me_noise_prob_ineq2} imply that
\begin{align}
    \plim{i}\frac{1}{i}&\textstyle\sum_{n=0}^{i-1}V_n\zeta_n\zeta_n^\T V_n^\T=0_r\label{eq:me_noise_attack_measure_noise}
\end{align}
and
\begin{align}
    \plim{i}\textstyle\frac{1}{i}\sum_{n=0}^{i-1}V_n(C_n\delAttack_n-C_n\xi_n)(C_n\delAttack_n-C_n\xi_n)^\T V_n^\T=0_r.\label{eq:me_noise_attack_state_noise}
\end{align}
We complete the proof using Lemma \ref{lm:nonconv} but we must first provide lower bound on the eigenvalues of $V_n^\T V_n$. 
Let $\lambda_n$ denote the smallest eignenvalue of $V_n^\T V_n$, then $\lambda_n$ is lower bounded since 
\begin{align}
    \lambda_n&=\frac{1}{\|(V_n^\T V_n)^{-1}\|}=\frac{1}{\|C_n\Sigma_{\delta,n}C_n^\T+\Sigma_{z,n}\|}\geq\nonumber\\
    &\geq\frac{1}{\eta_C^2\eta_\delta+\eta_z}>0.\label{eq:eigen_lower_bnd}
\end{align}
If we assume that \eqref{eq:att_equiv_attack_noise_claim} does not hold then applying Lemma \ref{lm:nonconv} contradicts \eqref{eq:me_noise_attack_measure_noise}. Therefore \eqref{eq:att_equiv_attack_noise_claim} must hold. Similarly, assuming that \eqref{eq:me_noise_state_noise_claim} does not hold would result in a contradiction with \eqref{eq:me_noise_attack_state_noise}. Therefore \eqref{eq:me_noise_state_noise_claim} must also hold.
\end{proof}

Next we prove that \eqref{eq:attack_additive_covariance} implies  \eqref{eq:att_equiv_attack_state_claim} to complete the relation between \eqref{eq:ltv_covariance_test} and the asymptotic attack power.
\begin{theorem}\label{thm:st_noise}
Consider an attacked LTV system satisfying \eqref{eq:state_update}-\eqref{eq:observer_error_update_attack} and an attack model satisfying \eqref{eq:attack_definition}-\eqref{eq:false_state_update}.
Let $V_n$ be as defined in \eqref{eq:residual_normalizer}.
Assume the attack scaling factor $\alpha$ is equal to 0.
If \eqref{eq:attack_additive_covariance} holds then \eqref{eq:att_equiv_attack_state_claim} holds as well.
\end{theorem}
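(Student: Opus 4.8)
The plan is to transfer condition \eqref{eq:attack_additive_covariance} all the way down to the white noise $\omega_n$ that drives the false state in \eqref{eq:false_state_update}, peeling it off one lag at a time by conditioning. Theorem~\ref{thm:me_noise} already supplies, under $\alpha=0$, both \eqref{eq:att_equiv_attack_noise_claim} (so $\zeta_n$ has zero asymptotic power) and \eqref{eq:me_noise_state_noise_claim}. Writing $g_n=\delAttack_n-\xi_n$, Lemma~\ref{lm:mat_scal} turns \eqref{eq:me_noise_state_noise_claim} into $\plim{i}~\frac{1}{i}\sum_{n=0}^{i-1}\|C_ng_n\|^2=0$, and a one-step index shift (the boundary term vanishes in probability since $g$ has uniformly bounded second moment by Theorem~\ref{thm:alpha_zero_bounded_state}) gives $\plim{i}~\frac{1}{i}\sum_{n=0}^{i-1}\|C_{n+1}g_{n+1}\|^2=0$. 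By the same lemma, the goal \eqref{eq:att_equiv_attack_state_claim} is equivalent to $\plim{i}~\frac{1}{i}\sum_{n=0}^{i-1}\|C_n\xi_n\|^2=0$.

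Substituting \eqref{eq:false_state_update} and $v_n=C_n\xi_n+\zeta_n$ (which holds because $\alpha=0$) into \eqref{eq:observer_error_update_attack}, and using $\AL_n-L_nC_n=A_n$ and $A_n-\AK_n=-B_nK_n$, yields
\begin{align}
g_{n+1}=\AL_ng_n-B_nK_n\xi_n-L_n\zeta_n-\omega_n, \label{eq:prop_g_recursion}
\end{align}
so $C_{n+1}g_{n+1}=-C_{n+1}\omega_n+R^{(0)}_n$ with $R^{(0)}_n:=C_{n+1}\AL_ng_n-C_{n+1}B_nK_n\xi_n-C_{n+1}L_n\zeta_n$. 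Since $\delAttack_n$ is determined by $v_0,\dots,v_{n-1}$ through \eqref{eq:observer_error_update_attack}, the term $R^{(0)}_n$ is measurable with respect to $\omega_0,\dots,\omega_{n-1},\zeta_0,\dots,\zeta_n$ and hence independent of the zero-mean $\omega_n$. Expanding $\frac{1}{i}\sum_n\|C_{n+1}g_{n+1}\|^2$ along this split, Corollary~\ref{cor:converge_expected_cor} eliminates the cross term; since the left-hand running average tends to $0$ in probability while $\|C_{n+1}\omega_n\|^2$ and $\|R^{(0)}_n\|^2$ are nonnegative, the positive-semidefinite splitting argument used in Theorems~\ref{thm:att_equiv} and~\ref{thm:me_noise} forces both $\plim{i}~\frac{1}{i}\sum_n\|C_{n+1}\omega_n\|^2=0$ and $\plim{i}~\frac{1}{i}\sum_n\|R^{(0)}_n\|^2=0$.

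The core is an induction that, for each lag $k\ge 0$, establishes $\plim{i}~\frac{1}{i}\sum_n\|C_{n+1}\AK_{(n,n-k+1)}\omega_{n-k}\|^2=0$ and $\plim{i}~\frac{1}{i}\sum_n\|R^{(k)}_n\|^2=0$, where $R^{(k)}_n$ is measurable with respect to $\omega_0,\dots,\omega_{n-k-1}$ and the $\zeta$'s; the case $k=0$ is the previous paragraph (recall $\AK_{(n,n+1)}=I_p$). Passing from $k-1$ to $k$: in $R^{(k-1)}_n$ discard the leftover $\zeta$-term (zero power by \eqref{eq:att_equiv_attack_noise_claim} and the norm bounds $\|C_n\|<\eta_C$, $\|\AL_n\|<\eta_{\AL}$, $\|L_n\|<\eta_L$), substitute \eqref{eq:prop_g_recursion} and \eqref{eq:false_state_update} once more to write the remainder as $-C_{n+1}M_{n,k}\,\omega_{n-k}+R^{(k)}_n$ with $R^{(k)}_n$ independent of $\omega_{n-k}$, and simplify $M_{n,k}$ using $\AL_\ell+B_\ell K_\ell=\AK_\ell+L_\ell C_\ell$ down to $\AK_{(n,n-k+1)}$ plus terms of the form $C_{n+1}L_\ell C_\ell(\cdots)$, each of which is zero power by a lower level of the induction together with $\|C_{n+1}L_\ell(\cdot)\|\le\eta_C\eta_L\|C_\ell(\cdot)\|$; the same peeling step then produces the level-$k$ invariant. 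Re-indexing, this shows that for every fixed $k$ the term $C_n\AK_{(n-1,n-k)}\omega_{n-1-k}$ in the unrolling $C_n\xi_n=\sum_{k=0}^{n-1}C_n\AK_{(n-1,n-k)}\omega_{n-1-k}$ of \eqref{eq:false_state_update} has zero asymptotic power. Since $\|\AK_{(n-1,n-k)}\|\le\eta_{\AK}^{k}$ by Assumption~\ref{asmp:bound1} and $\frac{1}{i}\sum_n\|\omega_n\|^2$ is bounded in probability (the $\omega_n$ are independent with $\|\Sigma_{\omega,n}\|<\eta_\omega$, so Chebyshev applies), the tail $k\ge K$ of this series contributes $O(\eta_{\AK}^{2K})$ to $\frac{1}{i}\sum_n\|C_n\xi_n\|^2$ while the first $K$ terms each vanish, so letting $K\to\infty$ gives $\plim{i}~\frac{1}{i}\sum_n\|C_n\xi_n\|^2=0$, i.e.\ \eqref{eq:att_equiv_attack_state_claim} by Lemma~\ref{lm:mat_scal}.

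I expect the inductive bookkeeping to be the main obstacle: at each level the prefactor $M_{n,k}$ of the peeled noise $\omega_{n-k}$ absorbs contributions both from unrolling \eqref{eq:prop_g_recursion} and from the dependence of $\xi_n$ on $\omega_{n-k}$, so one must verify simultaneously that $R^{(k)}_n$ genuinely stays independent of $\omega_{n-k}$ and that $M_{n,k}$ collapses to the clean product $\AK_{(n,n-k+1)}$ up to the already-controlled $C_{n+1}L_\ell C_\ell$ pieces. This is also why the tempting shortcut of rewriting \eqref{eq:observer_error_update_attack} as $\delAttack_{n+1}=A_n\delAttack_n+L_n(C_n\delAttack_n-v_n)$ fails: the open-loop matrix $A_n$ is only norm-bounded, not contractive, so it cannot be ``inverted'' to conclude that $\delAttack_n$, hence $C_n\xi_n$, has zero power. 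The remaining manipulations---cancelling cross terms, splitting nonnegative sums, and moving between scalar and matrix statements---are routine applications of Corollaries~\ref{cor:converge_expected_cor} and~\ref{cor:combine_prob_limit}, Theorem~\ref{thm:split_prob_limit}, and Lemma~\ref{lm:mat_scal}.
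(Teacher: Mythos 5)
Your proposal is correct, but it follows a genuinely different route from the paper's. The paper proves the contrapositive: if \eqref{eq:att_equiv_attack_state_claim} fails, Lemma~\ref{thm:min_m} produces a minimal lag $m^\prime$ whose contribution $C_n\AK_{(n-1,n-m^\prime+1)}\omega_{n-m^\prime}$ has non-vanishing average power; expanding \eqref{eq:me_noise_state_noise_claim}, killing every cross term involving $\omega_{n-m^\prime}$ (via the minimality of $m^\prime$, Cauchy--Schwarz, Markov, and the appendix bounds \eqref{eq:autocor_bound_1}--\eqref{eq:autocor_bound_3}) then leaves a positive-semidefinite lag-$m^\prime$ block that cannot vanish, contradicting Theorem~\ref{thm:me_noise}. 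You instead argue forward from \eqref{eq:me_noise_state_noise_claim}, peeling one lag at a time through $g_{n+1}=\AL_n g_n-B_nK_n\xi_n-L_n\zeta_n-\omega_n$; the bookkeeping you flag as the main obstacle does resolve cleanly, since the peeled coefficient obeys $M_{n,k+1}=M_{n,k}\AK_{n-k}+\AL_{(n,n-k+1)}L_{n-k}C_{n-k}$ with $M_{n,0}=I_p$, hence equals $\AK_{(n,n-k+1)}$ plus terms of the form $(\text{bounded})\cdot L_\ell C_\ell\AK_{(\ell-1,n-k+1)}\omega_{n-k}$ that are exactly lower levels of your induction, while $R^{(k)}_n$ indeed depends only on $\omega_0,\dots,\omega_{n-k-1}$ and the $\zeta$'s; the nonnegative-splitting trick and the tail bound $\|\AK_{(n-1,n-k)}\|\le\eta_{\AK}^{k}$ then give \eqref{eq:att_equiv_attack_state_claim}. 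Both arguments lean on the same toolkit (Theorem~\ref{thm:split_prob_limit}, Corollary~\ref{cor:converge_expected_cor}, Lemma~\ref{lm:mat_scal}, Theorem~\ref{thm:alpha_zero_bounded_state}), and the exponential autocorrelation bounds you dismiss as routine must still be verified at every induction level, exactly as the paper does in \eqref{eq:autocor_bound_1}--\eqref{eq:autocor_bound_3}. What your route buys is a direct, constructive conclusion---every lag's contribution vanishes, including $\textstyle\frac{1}{i}\sum_{n}\|C_{n+1}\omega_n\|^2\to0$---and it avoids extracting $m$ and $m^\prime$; what the paper's route buys is that all estimates concentrate on a single lag, so there is no inductive hierarchy and the truncation argument appears only once, inside Lemma~\ref{thm:min_m}.
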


To prove Theorem \ref{thm:st_noise}, we instead prove the contrapositive statement by assuming that \eqref{eq:att_equiv_attack_state_claim} does not hold, proving that \eqref{eq:me_noise_state_noise_claim} does not hold either, then using Theorem \ref{thm:me_noise} we complete the proof.
To do this, we split the summation in \eqref{eq:att_equiv_attack_state_claim} according to the following lemma.
This split allows us to disregard the cross terms on the LHS of \eqref{eq:me_noise_state_noise_claim} and shows that the remaining terms do not converge in probability to $0_r$.

\begin{lm}\label{thm:min_m}
Consider an attacked LTV system satisfying \eqref{eq:state_update}-\eqref{eq:observer_error_update_attack} and an attack model satisfying \eqref{eq:attack_definition}-\eqref{eq:false_state_update}.
Let $V_n$ be as defined in \eqref{eq:residual_normalizer}.
Assume the attack scaling factor $\alpha$ is equal to 0. 
If \eqref{eq:att_equiv_attack_state_claim} does not hold then there exists $m\in\mathbb{N}$ for which 
\begin{align}
    &\plim{i}\textstyle\frac{1}{i}\sum_{n=0}^{i-1}\left(C_n\sum_{j=1}^{m_n}\AK_{(n-1,n-j+1)}\omega_{n-j}\right)\nonumber\times\\
    &\qquad\qquad\times\left(C_n\textstyle\sum_{j=1}^{m_n}\AK_{(n-1,n-j+1)}\omega_{n-j}\right)^\T\neq0_r\label{eq:min_m_claim}.
\end{align}
where $m_n=\min\{n,m\}$. 
Furthermore, there exists an $m^\prime\in\mathbb{N}$ such that $m^\prime\leq m$ and 
\begin{align}
    &\plim{i}\textstyle\frac{1}{i}\sum_{n=0}^{i-1}C_n\AK_{(n-1,n-j+1)}\omega_{n-j}\nonumber\times\\
    &\hspace{1in}\times\omega_{n-j}^\T\AK_{(n-1,n-j+1)}^\T C_n^\T\neq0_r\label{eq:min_m_claim_single}
\end{align}
for $j=m^\prime$ but not for $j<m^\prime$.
\end{lm}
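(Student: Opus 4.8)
The plan is to prove the contrapositive of the first assertion: if the average in \eqref{eq:min_m_claim} converged in probability to $0_r$ for \emph{every} $m\in\mathbb{N}$, then the full average in \eqref{eq:att_equiv_attack_state_claim} would converge in probability to $0_r$ as well. Unrolling \eqref{eq:false_state_update} yields $\xi_n=\sum_{j=1}^{n}\AK_{(n-1,n-j+1)}\omega_{n-j}$, the quantity truncated in \eqref{eq:min_m_claim}, so for each $m$ I would split $\xi_n$ into the truncated part $\xi_n^{(m)}=\sum_{j=1}^{m_n}\AK_{(n-1,n-j+1)}\omega_{n-j}$ (whose quadratic form is exactly the summand of \eqref{eq:min_m_claim}) and the tail $\tilde\xi_n^{(m)}=\xi_n-\xi_n^{(m)}$, and expand $C_n\xi_n\xi_n^\T C_n^\T$ into the truncated quadratic, two cross terms, and the tail quadratic.

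The key estimate is that the tail and cross contributions are uniformly small in $m$, independently of $i$. Since $\|\AK_n\|<\eta_{\AK}<1$ forces $\|\AK_{(n-1,n-j+1)}\|\le\eta_{\AK}^{j-1}$, and the $\omega$'s are independent and zero mean with $\|\Sigma_{\omega,n}\|<\eta_\omega$, one gets $\|\mathds{E}[\tilde\xi_n^{(m)}(\tilde\xi_n^{(m)})^\T]\|\le\eta_\omega\eta_{\AK}^{2m}/(1-\eta_{\AK}^2)$ uniformly in $n$; hence the expected norm of $\frac1i\sum_n C_n\tilde\xi_n^{(m)}(\tilde\xi_n^{(m)})^\T C_n^\T$ is at most $c\,\eta_{\AK}^{2m}$ for a constant $c$ depending only on the bounds in Assumption \ref{asmp:bound1}, and Markov's inequality makes this term small in probability uniformly in $i$ once $m$ is large. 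For the cross terms I would bound $\frac1i\sum_n\|C_n\xi_n^{(m)}\|\,\|C_n\tilde\xi_n^{(m)}\|$ via Cauchy--Schwarz by $(\frac1i\sum_n\|C_n\xi_n^{(m)}\|^2)^{1/2}(\frac1i\sum_n\|C_n\tilde\xi_n^{(m)}\|^2)^{1/2}$; the first factor has expectation at most $r\eta_C^2\eta_\omega/(1-\eta_{\AK}^2)$ uniformly in $m$ and $i$, hence is bounded in probability, while the second factor is the trace of the tail quadratic average, hence small in probability for large $m$, so the product is small in probability uniformly in $i$. Combining the three estimates at a common threshold then finishes part one: for the $\epsilon>0$ witnessing that \eqref{eq:att_equiv_attack_state_claim} fails, pick $m$ so that the tail and cross pieces lie within $\epsilon/2$ of $0_r$ with probability $\to1$, then invoke the assumed vanishing of the truncated quadratic average for that fixed $m$ to put the remaining piece within $\epsilon/2$ with probability $\to1$; this contradicts the failure of \eqref{eq:att_equiv_attack_state_claim}, so some finite $m$ makes \eqref{eq:min_m_claim} hold.

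For the second assertion, fix such an $m$ and expand the product of sums inside \eqref{eq:min_m_claim}: up to the finitely many boundary terms with $m_n<m$ (which vanish in the $1/i$ average), the truncated quadratic average equals $\sum_{j=1}^{m}\frac1i\sum_n C_n\AK_{(n-1,n-j+1)}\omega_{n-j}\omega_{n-j}^\T\AK_{(n-1,n-j+1)}^\T C_n^\T$ plus the $j\ne k$ cross terms. For $j\ne k$ the vectors $\omega_{n-j}$ and $\omega_{n-k}$ are independent, zero mean, with bounded covariance, and $C_n\AK_{(n-1,n-j+1)}$ is a bounded deterministic matrix, so Theorem \ref{thm:alpha_zero_bounded_state} and Corollary \ref{cor:converge_expected_cor} give that each such cross term converges in probability to $0_r$, and Theorem \ref{thm:split_prob_limit} lets me discard them. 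If every one of the $m$ diagonal averages converged in probability to $0_r$, then Corollary \ref{cor:combine_prob_limit} would force their sum, hence the truncated quadratic average, to converge to $0_r$, contradicting \eqref{eq:min_m_claim}; therefore the set of $j\in\{1,\dots,m\}$ for which \eqref{eq:min_m_claim_single} holds is nonempty, and I take $m'$ to be its minimum, so that $m'\le m$, \eqref{eq:min_m_claim_single} holds at $j=m'$, and by minimality fails for every $j<m'$.

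I expect the main obstacle to be the bookkeeping forced by working with convergence in probability rather than with honest limits: none of the intermediate averages need converge, so every step must be phrased through $\limsup_i\mathds{P}(\|\cdot\|>\epsilon)$, and the three separate ``small with high probability'' estimates (truncated part, cross terms, tail) must be glued together uniformly in $i$ at a shared threshold. The geometric decay coming from $\eta_{\AK}<1$ is what makes the tail controllable, but because the tail vectors $\tilde\xi_n^{(m)}$ are correlated across $n$, I would rely on the expectation bound plus Markov's inequality rather than on any law of large numbers for the tail itself.
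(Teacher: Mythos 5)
Your proposal is correct and follows essentially the same route as the paper: truncate $\xi_n$ at lag $m$, control the tail quadratic via the geometric decay $\eta_{\AK}^{2m}$ together with Markov's inequality, handle the cross terms with Cauchy--Schwarz and probability-splitting, and obtain $m^\prime$ by discarding the $j\neq k$ cross terms (independence of $\omega_n$) and taking the smallest index whose diagonal average fails to vanish. The only difference is presentational — you argue the contrapositive/contradiction form, while the paper directly lower-bounds the probability for the truncated sum (with explicit constants $\epsilon/6$, $\tau/4$) — but the decomposition and estimates are the same.
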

\begin{proof}(Lemma \ref{thm:min_m})
First, we prove the existence of $m$. 
Assume that \eqref{eq:att_equiv_attack_state_claim} does not hold. 
Expanding the LHS of \eqref{eq:att_equiv_attack_state_claim} using \eqref{eq:false_state_update} results in 
\begin{align}
    \plim{i}\frac{1}{i}&\textstyle\sum_{n=0}^{i-1}\left(\textstyle\sum_{j=1}^{n}C_n\AK_{(n-1,n-j+1)}\omega_{n-j}\right)\times\nonumber\\
    &\times\left(\textstyle\sum_{j=1}^{n}C_n\AK_{(n-1,n-j+1)}\omega_{n-j}\right)^\T\neq 0_r.
\end{align}
Then, using Lemma \ref{lm:mat_scal} we have that
\begin{align}
    \plim{i}\textstyle\frac{1}{i}\sum_{n=0}^{i-1}\left\|\textstyle\sum_{j=1}^{n}C_n\AK_{(n-1,n-j+1)}\omega_{n-j}\right\|^2\neq 0.
\end{align}
Since \eqref{eq:att_equiv_attack_state_claim} does not hold there exists $\epsilon,\tau>0$ such that 
\begin{align}
    \mathds{P}\left(\textstyle\frac{1}{i}\sum_{n=0}^{i-1}\left\|\textstyle\sum_{j=1}^{n}C_n\AK_{(n-1,n-j+1)}\omega_{n-j}\right\|^2>\epsilon\right)>\tau\label{eq:min_m_total_prob}
\end{align}
for infinitely many $i$. 
We prove that there exists an $m$ such that for each $i$ that \eqref{eq:min_m_total_prob} holds we have 
\begin{align}
   \mathds{P}\left(\textstyle\frac{1}{i}\sum_{n=0}^{i-1}\left\|\textstyle\sum_{j=1}^{m_n}C_n\AK_{(n-1,n-j+1)}\omega_{n-j}\right\|^2>\frac{\epsilon}{6}\right)>\frac{\tau}{4}\label{eq:min_m_trunc_prob}
\end{align}
which is equivalent to \eqref{eq:min_m_claim} as a result of Lemma \ref{lm:mat_scal}. 
To make statements on the truncated sum, we start by finding the relationship between the probability in the LHS of \eqref{eq:min_m_total_prob} and the probability in the LHS of \eqref{eq:min_m_trunc_prob}. 
For each $i$ such that \eqref{eq:min_m_total_prob} holds, we apply triangle inequality to get 
\begin{align}
    \tau&<\mathds{P}\bigg(\textstyle\frac{1}{i}\sum_{n=0}^{i-1}\bigg(\left\|\textstyle\sum_{j=1}^{m_n}C_n\AK_{(n-1,n-j+1)}\omega_{n-j}\right\|+\nonumber\\
    &\quad+\left\|\textstyle\sum_{j=m_n+1}^{n}C_n\AK_{(n-1,n-j+1)}\omega_{n-j}\right\|\bigg)^2>\epsilon\bigg).
\end{align}
Further expanding and applying Theorem \ref{thm:split_prob_addition} result in
\begin{align}
    &\tau<\mathds{P}\left(\textstyle\frac{1}{i}\sum_{n=0}^{i-1}\left\|\textstyle\sum_{j=1}^{m_n}C_n\AK_{(n-1,n-j+1)}\omega_{n-j}\right\|^2>\frac{\epsilon}{3}\right)+\nonumber\\
    &~+\mathds{P}\bigg(\textstyle\frac{2}{i}\sum_{n=0}^{i-1}\left\|\textstyle\sum_{j=1}^{m_n}C_n\AK_{(n-1,n-j+1)}\omega_{n-j}\right\|\times\nonumber\\
    &~\times\left\|\textstyle\sum_{j=m_n+1}^{n}C_n\AK_{(n-1,n-j+1)}\omega_{n-j}\right\|>\textstyle\frac{\epsilon}{3}\bigg)+\nonumber\\
    &~+\mathds{P}\left(\textstyle\frac{1}{i}\sum_{n=0}^{i-1}\left\|\textstyle\sum_{j=m_n+1}^{n}C_n\AK_{(n-1,n-j+1)}\omega_{n-j}\right\|^2>\frac{\epsilon}{3}\right).\label{eq:min_m_full_expand}
\end{align}
Focusing on the center term in the RHS of \eqref{eq:min_m_full_expand}, we can write
\begin{align}
    &\mathds{P}\Bigg(\textstyle\frac{2}{i}\sum_{n=0}^{i-1}\left\|\textstyle\sum_{j=1}^{m_n}C_n\AK_{(n-1,n-j+1)}\omega_{n-j}\right\|\times\nonumber\\
    &~\times\left\|\textstyle\sum_{j=m_n+1}^{n}C_n\AK_{(n-1,n-j+1)}\omega_{n-j}\right\|>\textstyle\frac{\epsilon}{3}\Bigg)\leq\nonumber\\
    &\leq \mathds{P}\left(\sqrt{\textstyle\frac{2}{i}\sum_{n=0}^{i-1}\left\|\textstyle\sum_{j=1}^{m_n}C_n\AK_{(n-1,n-j+1)}\omega_{n-j}\right\|^2}\times\right.\nonumber\\
    &~\times\left.\sqrt{\textstyle\frac{2}{i}\sum_{n=0}^{i-1}\left\|\textstyle\sum_{j=m_n+1}^{n}C_n\AK_{(n-1,n-j+1)}\omega_{n-j}\right\|^2}>\textstyle\frac{\epsilon}{3}\right)\leq\nonumber\\
    &\leq \mathds{P}\left(\textstyle\frac{2}{i}\sum_{n=0}^{i-1}\left\|\textstyle\sum_{j=1}^{m_n}C_n\AK_{(n-1,n-j+1)}\omega_{n-j}\right\|^2>\frac{\epsilon}{3}\right)+\nonumber\\
    &~+\mathds{P}\left(\textstyle\frac{2}{i}\sum_{n=0}^{i-1}\left\|\textstyle\sum_{j=m_n+1}^{n}C_n\AK_{(n-1,n-j+1)}\omega_{n-j}\right\|^2>\frac{\epsilon}{3}\right),\label{eq:min_m_center_bound}
\end{align}
where the first inequality comes from applying the Cauchy Schwarz Inequality and the second inequality comes from applying Theorem \ref{thm:split_prob_mult}.
Then since 
\begin{align}
    \mathds{P}\left(\textstyle\frac{1}{i}\sum_{n=0}^{i-1}\left\|\textstyle\sum_{j=1}^{m_n}C_n\AK_{(n-1,n-j+1)}\omega_{n-j}\right\|^2>\frac{\epsilon}{3}\right)\leq\nonumber\\
    \leq\mathds{P}\left(\textstyle\frac{2}{i}\sum_{n=0}^{i-1}\left\|\textstyle\sum_{j=1}^{m_n}C_n\AK_{(n-1,n-j+1)}\omega_{n-j}\right\|^2>\frac{\epsilon}{3}\right)
\end{align}
and
\begin{align}
    \mathds{P}\left(\textstyle\frac{1}{i}\sum_{n=0}^{i-1}\left\|\textstyle\sum_{j=m_n+1}^{n}C_n\AK_{(n-1,n-j+1)}\omega_{n-j}\right\|^2>\frac{\epsilon}{3}\right)\leq\nonumber\\
    \leq\mathds{P}\left(\textstyle\frac{2}{i}\sum_{n=0}^{i-1}\left\|\textstyle\sum_{j=m_n+1}^{n}C_n\AK_{(n-1,n-j+1)}\omega_{n-j}\right\|^2>\frac{\epsilon}{3}\right)\label{eq:min_m_remain_prob_bound},
\end{align}
we can combine \eqref{eq:min_m_full_expand} with \eqref{eq:min_m_center_bound}-\eqref{eq:min_m_remain_prob_bound} to obtain
\begin{align}
    &\tau<2\mathds{P}\left(\textstyle\frac{1}{i}\sum_{n=0}^{i-1}\left\|\textstyle\sum_{j=1}^{m_n}C_n\AK_{(n-1,n-j+1)}\omega_{n-j}\right\|^2>\frac{\epsilon}{6}\right)+\nonumber\\
    &+2\mathds{P}\left(\textstyle\frac{1}{i}\sum_{n=0}^{i-1}\left\|\textstyle\sum_{j=m_n+1}^{n}C_n\AK_{(n-1,n-j+1)}\omega_{n-j}\right\|^2>\frac{\epsilon}{6}\right).\label{eq:min_m_prob_split}
\end{align}
If we can upper bound the second term in the RHS or \eqref{eq:min_m_prob_split} by $\frac{\tau}{2}$ the first term must be lower bounded by $\frac{\tau}{2}$ completing the proof. 
To provide this bound we make use of Markov's Inequality.
To this end, we first bound the expectation

% Expanding the norm we get
% Since $\omega_n$ is an independent sequence we have that the cross terms have zero expectation resulting in
\begin{align}
    &\mathds{E}\left(\textstyle\frac{1}{i}\sum_{n=0}^{i-1}\left\|\textstyle\sum_{j=m_n+1}^n C_n\AK_{(n-1,n-j+1)}\omega_{n-j}\right\|^2\right)=\nonumber\\
    % &=\mathds{E}\Bigg(\textstyle\frac{1}{i}\sum_{n=0}^{i-1}\left(\textstyle\sum_{j=m_n+1}^n C_n\AK_{(n-1,n-j+1)}\omega_{n-j}\right)^\T\times\nonumber\\
    % &\qquad\times\left(\textstyle\sum_{j=m_n+1}^n C_n\AK_{(n-1,n-j+1)}\omega_{n-j}\right)\Bigg)\nonumber\\
    &=\mathds{E}\Bigg(\textstyle\frac{1}{i}\sum_{n=0}^{i-1}\textstyle\sum_{j=m_n+1}^n\left(C_n\AK_{(n-1,n-j+1)}\omega_{n-j}\right)^\T\times\nonumber\\
    &\qquad\times\left(C_n\AK_{(n-1,n-j+1)}\omega_{n-j}\right)\Bigg)\leq\nonumber\\
    &\leq\mathds{E}\Big(\textstyle\frac{1}{i}\sum_{n=0}^{i-1}\textstyle\sum_{j=m+1}^\infty\left(C_{n+j}\AK_{(n+j-1,n+1)}\omega_{n}\right)^\T\times\nonumber\\
    &\qquad\times\left(C_{n+j}\AK_{(n+j-1,n+1)}\omega_{n}\right)\Big)\nonumber\\
    &\leq\textstyle\frac{1}{i}\sum_{n=0}^{i-1}\textstyle\sum_{j=m+1}^\infty p\eta_C^2\eta_{A1}^{2(j-1)}\eta_\omega^2=\frac{p\eta_C^2\eta_{A1}^{2m}\eta_\omega^2}{1-\eta_{A1}^2},
\end{align}
where the the first equality comes from expanding the norm and ignoring uncorrelated terms, the first inequality comes from rearranging the summation and allowing the second summation to go to infinity, the second inequality comes from distributing the expectation and upper bounding each element, and the final equality comes from evaluating the summations.
% Reformatting the second sum we can then write
% since the summands are all non-negative and all the summands in \eqref{eq:min_m_expect_cancel_cross} are also represented in the sum in \eqref{eq:min_m_expect_inf_lim}. 
% Finally, using the bounds on each of the matrices and the bound on the covariance of $\omega_n$ we have
% \begin{align}    
%     \mathds{E}\left(\textstyle\frac{1}{i}\sum_{n=0}^{i-1}\left\|\textstyle\sum_{j=m_n+1}^n C_n\AK_{n-1}\hdots\AK_{n-j+1}\omega_{n-j}\right\|^2\right)\leq\nonumber\\
%     \leq\textstyle\frac{1}{i}\sum_{n=0}^{i-1}\textstyle\sum_{j=m+1}^\infty p\eta_C^2\eta_{A1}^{2(j-1)}\eta_\omega^2=\frac{p\eta_C^2\eta_{A1}^{2m}\eta_\omega^2}{1-\eta_{A1}^2}
% \end{align}
% where the final inequality comes from evaluating the geometric series.
Since $\eta_{A1}<1$, we can choose $m$ sufficiently large such that 
\begin{align}
    \textstyle\frac{p\eta_C^2\eta_{A1}^{2m}\eta_\omega^2}{1-\eta_{A1}^2}<\frac{\tau\epsilon}{24}.
\end{align}
Using Markov's inequality \cite[Equation 5.31]{Billingsley1995ProbabilityMeasure} we have that 
\begin{align}
    \mathds{P}&\left(\textstyle\frac{1}{i}\sum_{n=0}^{i-1}\left\|\textstyle\sum_{j=m_n+1}^{n}C_n\AK_{(n-1,n-j+1)}\omega_{n-j}\right\|^2>\frac{\epsilon}{6}\right)\leq\nonumber\\
    &\textstyle\qquad\leq\frac{6p\eta_C^2\eta_{A1}^{2m}\eta_\omega^2}{(1-\eta_{A1}^2)\epsilon}<\frac{\tau}{4}
\end{align}
which completes the proof for the existence of $m$. 

Next, we prove the existence of $m^\prime$. 
Consider the expansion of \eqref{eq:min_m_claim}
\begin{align}
    &\plim{i}\textstyle\frac{1}{i}\sum_{n=0}^{i-1}C_n\sum_{j=1}^{m_n}\sum_{k=1}^{m_n}\AK_{(n-1,n-j+1)}\omega_{n-j}\nonumber\times\\
    &\hspace{1in}\times\omega_{n-k}^\T\AK_{(n-1,n-k+1)}^\T C_n^\T\neq0.
\end{align}
Considering the summands where $j\neq k$ we have that
\begin{align}
    &\plim{i}\textstyle\frac{1}{i}\sum_{n=0}^{i-1}C_n\AK_{(n-1,n-j+1)}\omega_{n-j}\nonumber\times\\
    &\hspace{1in}\times\omega_{n-k}^\T\AK_{(n-1,n-k+1)}^\T C_n^\T=0
\end{align}
by Theorem \ref{thm:converge_expected} since $\omega_n$ is independent and the dynamics are bounded and stable. 
If we further assume that there does not exist an $m^\prime$ for which \eqref{eq:min_m_claim_single} holds then by Theorem \ref{thm:split_prob_addition} we have that \eqref{eq:min_m_claim} does not hold which is a contradiction. 
Therefore, the set of integers less that or equal to $m$ for which \eqref{eq:min_m_claim_single} holds, is a non-empty finite set. 
The smallest element of this set then satisfies the conditions for $m^\prime$.
\end{proof}

Now returning to prove the Theorem.
\begin{proof}(Theorem \ref{thm:st_noise}) 
WLOG, in this proof, we allow summations to reference variables with negative index by assuming these values to be $0_r$ to ease notation. 
Assume that \eqref{eq:attack_additive_covariance} holds but \eqref{eq:att_equiv_attack_state_claim} does not.
Since \eqref{eq:att_equiv_attack_state_claim} does not hold, $m^\prime$ be chosen such that it satisfies the description in Lemma \ref{thm:min_m}. 
From Theorem \ref{thm:me_noise} we have that \eqref{eq:attack_additive_covariance} implies \eqref{eq:me_noise_state_noise_claim}.
Expanding the LHS of \eqref{eq:me_noise_state_noise_claim} using \eqref{eq:false_state_update} gives us
\begin{align}
    &\plim{i}\textstyle\frac{1}{i}\sum_{n=0}^{i-1}(C_n\delAttack_n-C_n\xi_n)(C_n\delAttack_n-C_n\xi_n)^\T=\nonumber\\
    &=\plim{i}\textstyle\frac{1}{i}\sum_{n=0}^{i-1}\Big(C_n\left(\delAttack_n-\sum_{j=1}^n\AK_{(n-1,n-j+1)}\omega_{n-j}\right)\times\nonumber\\
    &\quad\times\left(\delAttack_n-\textstyle\sum_{k=1}^n\AK_{(n-1,n-k+1)}\omega_{n-k}\right)^\T C_n^\T\Big)=0_r. \label{eq:st_noise_expand_attack_state}
\end{align}
By separating the index $m^\prime$ we can write
\begin{align}
    &\plim{i}\textstyle\frac{1}{i}\sum_{n=0}^{i-1}(C_n\delAttack_n-C_n\xi_n)(C_n\delAttack_n-C_n\xi_n)^\T=\nonumber\\
    &=\plim{i}\textstyle\frac{1}{i}\sum_{n=0}^{i-1}C_n\left(\delAttack_n-\sum_{\substack{1\leq j\leq n\\j\neq m^\prime}}\AK_{(n-1,n-j+1)}\omega_{n-j}\right)\times\nonumber\\
    &\quad\times\left(\delAttack_n-\textstyle\sum_{\substack{0\leq k\leq n\\k\neq m^\prime}}\AK_{(n-1,n-k+1)}\omega_{n-k}\right)^\T C_n^\T+\nonumber\\
    &\quad-C_n\left(\delAttack_n-\textstyle\sum_{\substack{1\leq j\leq n\\j\neq m^\prime}}\AK_{(n-1,n-j+1)}\omega_{n-j}\right)\times\nonumber\\
    &\quad\times\omega_{n-m^\prime}^\T\AK_{(n-1,n-m^\prime+1)}^\T C_n^\T-C_n\AK_{(n-1,n-m^\prime+1)}\omega_{n-m^\prime}\times\nonumber\\
    &\quad\times\left(\delAttack_n-\textstyle\sum_{\substack{0\leq k\leq n\\k\neq m^\prime}}\AK_{(n-1,n-k+1)}\omega_{n-k}\right)^\T C_n^\T+\nonumber\\
    &\quad+C_n\AK_{(n-1,n-m^\prime+1)}\omega_{n-m^\prime}\times\nonumber\\
    &\quad\times\omega_{n-m^\prime}^\T\AK_{(n-1,n-m^\prime+1)}^\T C_n^\T=0_r.\label{eq:st_noise_split_sum}
\end{align}
\begin{figure*}
    \centering
    \includegraphics[trim={1.6in 0.27in 1.6in 0.72in},clip,width=\textwidth]{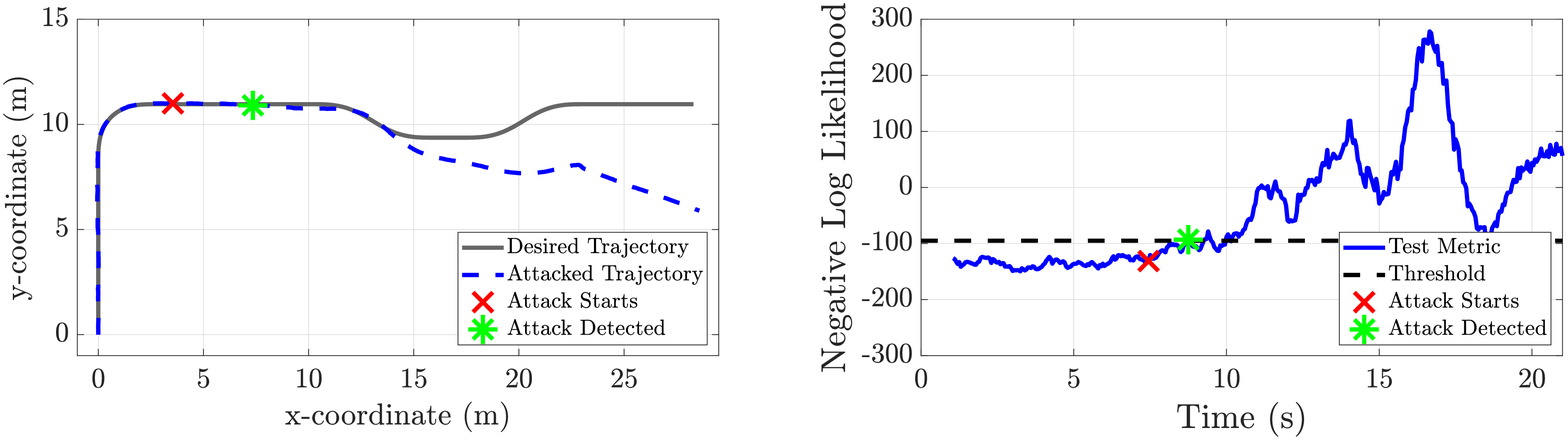}
    \caption{Desired and attacked trajectory of an LTV car model showing attack start and detection (Left); Corresponding LTV Dynamic Watermarking test metric showing attack start and detection (right)}
    \label{fig:LTV_attacked}
\end{figure*}
For now suppose that 
\begin{align}
    &\plim{i}\textstyle\frac{1}{i}\sum_{n=0}^{i-1}-C_n\left(\delAttack_n-\sum_{\substack{1\leq j\leq n\\j\neq m^\prime}}\AK_{(n-1,n-j+1)}\omega_{n-j}\right)\times\nonumber\\
    &\qquad\times\omega_{n-m^\prime}^\T\AK_{(n-1,n-m^\prime+1)}^\T C_n^\T=0_r.\label{eq:st_noise_cross_cancel}
\end{align}
Then by Theorem \ref{thm:split_prob_limit} we have that 
\begin{align}
    &\plim{i}\textstyle\frac{1}{i}\sum_{n=0}^{i-1}(C_n\delAttack_n-C_n\xi_n)(C_n\delAttack_n-C_n\xi_n)^\T=\nonumber\\
    &=\plim{i}\textstyle\frac{1}{i}\sum_{n=0}^{i-1}C_n\left(\delAttack_n-\sum_{\substack{1\leq j\leq n\\j\neq m^\prime}}\AK_{(n-1,n-j+1)}\omega_{n-j}\right)\times\nonumber\\
    &\qquad\times\left(\delAttack_n-\textstyle\sum_{\substack{0\leq k\leq n\\k\neq m^\prime}}\AK_{(n-1,n-k+1)}\omega_{n-k}\right)^\T C_n^\T+\nonumber\\
    &\qquad+C_n\AK_{(n-1,n-m^\prime+1)}\omega_{n-m^\prime}\times\nonumber\\
    &\qquad\times\omega_{n-m^\prime}^\T\AK_{(n-1,n-m^\prime+1)}^\T C_n^\T=0_r.\label{eq:st_noise_split_sum_cancel}
\end{align}
Furthermore, by our choice of $m^\prime$ we have that 
\begin{align}
    &\plim{i}\textstyle\frac{1}{i}\sum_{n=0}^{i-1}C_n\AK_{(n-1,n-m^\prime+1)}\omega_{n-m^\prime}\times\nonumber\\
    &\qquad\times\omega_{n-m^\prime}^\T\AK_{(n-1,n-m^\prime+1)}^\T C_n^\T\neq0_r,\label{eq:st_noise_finite_sum_nonconv}
\end{align} 
and since the terms are all positive-semidefinite matrices
\begin{align}
    &\mathds{P}\big(\big\|\textstyle\frac{1}{i}\sum_{n=0}^{i-1}C_n\AK_{(n-1,n-m^\prime+1)}\omega_{n-m^\prime}\times\nonumber\\
    &\qquad\times\omega_{n-k}^\T\AK_{(n-1,n-k+1)}^\T C_n^\T\big\|>\epsilon\big)\leq\nonumber\\
    &\leq\mathds{P}\Bigg(\Bigg\|\textstyle\frac{1}{i}\sum_{n=0}^{i-1}C_n\left(\delAttack_n-\sum_{\substack{1\leq j\leq n\\j\neq m^\prime}}\AK_{(n-1,n-j+1)}\omega_{n-j}\right)\times\nonumber\\
    &\qquad\times\left(\delAttack_n-\textstyle\sum_{\substack{0\leq k\leq n\\k\neq m^\prime}}\AK_{(n-1,n-k+1)}\omega_{n-k}\right)^\T C_n^\T+\nonumber\\
    &\qquad+C_n\AK_{(n-1,n-m^\prime+1)}\omega_{n-m^\prime}\times\nonumber\\
    &\qquad\times\omega_{n-m^\prime}^\T\AK_{(n-1,n-m^\prime+1)}^\T C_n^\T\Bigg\|>\epsilon\Bigg).\label{eq:st_noise_prob_ineq}
\end{align}
This implies that \eqref{eq:st_noise_split_sum} cannot hold which contradicts \eqref{eq:attack_additive_covariance}. 
Therefore \eqref{eq:att_equiv_attack_state_claim} must hold since otherwise there exists an $m^\prime$ satisfying Lemma \ref{lm:nonconv}. 

To complete the proof, we now show that \eqref{eq:st_noise_cross_cancel} indeed holds.
by Corollary \ref{cor:combine_prob_limit} this is equivalent to proving
\begin{align}
    &\plim{i}\textstyle\frac{1}{i}\sum_{n=0}^{i-1}C_n\sum_{\substack{1\leq j\leq n\\j\neq m^\prime}}\AK_{(n-1,n-j+1)}\omega_{n-j}\times\nonumber\\
    &\qquad\times\omega_{n-m^\prime}^\T\AK_{(n-1,n-m^\prime+1)}^\T C_n^\T=0_r\label{eq:st_noise_delta_cross_converge1}\\
    \intertext{and}
    &\plim{i}\textstyle\frac{1}{i}\sum_{n=0}^{i-1}-C_n\delAttack_n\omega_{n-m^\prime}^\T\AK_{(n-1,n-m^\prime+1)}^\T C_n^\T=0_r.\label{eq:st_noise_delta_cross_converge2}
\end{align}
% \pedro{Perhaps it would be a good idea to move the bound on the expectations to the appendix? They make the proof very long and a bit tough to follow. Namely Eq(80), Eq.(83) and Eq.(85) could be moved, so the proof of Theorem III.11 can be closer to the lemma proving the existence of m'.}
Note, \eqref{eq:st_noise_delta_cross_converge1} holds by Corollary \ref{cor:converge_expected_cor} since all $\omega_n$ are mutually independent, $\|C_n\AK_{(n-1,n-m^\prime+1)}\|\leq\|C_n\|<\eta_C$, and \add{the auto-correlation is bounded by \eqref{eq:autocor_bound_1} in the appendix.}
\remove{
\begin{align}
    &\Bigg\|\mathds{E}\Bigg[\Bigg(\textstyle\sum_{\substack{1\leq j\leq n\\j\neq m^\prime}}\AK_{(n-1,n-j+1)}\omega_{n-j}\Bigg)\times\nonumber\\
    &\qquad\times\Bigg(\textstyle\sum_{\substack{1\leq k\leq n+i\\k\neq m^\prime}}\AK_{(n+i-1,n+i-k+1)}\omega_{n+i-k}\Bigg)^\T\Bigg]\Bigg\|=\nonumber\\
    &=\Bigg\|\textstyle\sum_{\substack{1\leq j\leq n\\j\neq m^\prime}}\AK_{(n-1,n-j+1)}\Sigma_{\omega,n-j}\AK_{(n+i-1,n-j+1)}^\T \Bigg\|\leq\nonumber\\
    &\leq\textstyle\sum_{j=1}^\infty \eta_{A1}^{2j-4+i}\eta_\omega=\textstyle\frac{\eta_{A2}^{i-2}\eta_\omega}{1-\eta_{A1}^2}.\label{eq:autocor_bound_1}
\end{align}
Here, the first equality comes from evaluating the expectation, the inequality comes from distributing the norm using triangle inequality and the subadditivity of the spectral norm, bounding the individual terms, and allowing the summation to include $j=m^\prime$ and go to infinity, and the final inequality comes from evaluating the summation.
}
Furthermore, expanding the LHS of \eqref{eq:st_noise_delta_cross_converge2} using \eqref{eq:observer_error_update_attack} gives us
\begin{align}
    &\plim{i}\textstyle\frac{1}{i}\sum_{n=0}^{i-1}-C_n\delAttack_n\omega_{n-m^\prime}^\T \AK_{(n-1,n-m^\prime+1)}^\T C_n^\T=\nonumber\\
    &=\plim{i}\textstyle\frac{1}{i}\sum_{n=0}^{i-1}C_n\big(\sum_{j=0}^{n-1}\AL_{(n-1,j+1)} L_j\zeta_j\nonumber\\
    &\quad+\textstyle\sum_{k=1}^{n}\AL_{(n-1,n-k+1)} L_{n-k}C_{n-k}\times\nonumber\\
    &\quad\times\textstyle\sum_{\ell=k+1}^n\AK_{(n-k-1,n-\ell+1)}\omega_{n-\ell}\big)\times\nonumber\\
    &\quad\times\omega_{n-m^\prime}^\T \AK_{(n-1,n-m^\prime+1)}^\T C_n^\T=0_r.\label{eq:st_noise_delta_cross_converge2_expand}
\end{align}
To prove that \eqref{eq:st_noise_delta_cross_converge2} holds, we use Corollary \ref{cor:combine_prob_limit} on \eqref{eq:st_noise_delta_cross_converge2_expand} and show that each term converges to $0_r$.
Note, by Theorem \ref{thm:converge_expected},
\begin{align}
    &\plim{i}\textstyle\frac{1}{i}\sum_{n=0}^{i-1}C_n\big(\sum_{j=0}^{n-1}\AL_{(n-1,j+1)} L_j\zeta_j\big)\times\nonumber\\
    &\qquad\times\omega_{n-m^\prime}^\T \AK_{(n-1,n-m^\prime+1)}^\T C_n^\T=0_r,
\end{align}
since $\|C_n\AK_{(n-1,n-m^\prime+1)}\|\leq\eta_C$, $\zeta_n$ and $\omega_n$ are mutually independent, and \add{the auto-correlation is bounded by \eqref{eq:autocor_bound_2} in the appendix.}
\remove{
\begin{align}
    &\left\|\mathds{E}\left[\textstyle\sum_{j=0}^{n-1}\sum_{k=0}^{n+i-1}\AL_{(n-1,j+1)} L_j\zeta_j\zeta_k^\T L_k^\T\AL_{(n+i-1,k+1)}^\T\right]\right\|=\nonumber\\
    &\quad=\left\|\textstyle\sum_{j=0}^{n-1}\AL_{(n-1,j+1)} L_j\Sigma_{\zeta_j} L_j^\T\AL_{(n+i-1,j+1)}^\T\right\|\leq\nonumber\\
    &\quad\leq\textstyle\sum_{j=0}^{n-1}\eta_{A2}^{2(n-1-j)}\eta_{A2}^i\eta_L^2\eta_{\zeta}\leq\frac{\eta_{A2}^i\eta_L^2\eta_\zeta}{1-\eta_{A2}^2}.\label{eq:autocor_bound_2}
\end{align}
}
Furthermore, considering the portion of $\delAttack_n$ not dependent on $\omega_{n-m^\prime}$, by Theorem \ref{thm:converge_expected},
\begin{align}
    &\plim{i}\textstyle\frac{1}{i}\sum_{n=0}^{i-1}C_n\sum_{j=1}^{n}\AL_{(n-1,n-j+1)} L_{n-j}C_{n-j}\times\nonumber\\
    &\quad\times\textstyle\sum_{\substack{k=j+1\\k\neq m^\prime}}^n\AK_{(n-j-1,n-k+1)}\omega_{n-k}\times\nonumber\\
    &\quad\times\omega_{n-m^\prime}^\T \AK_{(n-1,n-m^\prime+1)}^\T C_n^\T=0_r,
\end{align}
since $\omega_n$ are independent, $\|C_n\AK_{(n-1,n-m^\prime+1)}\|\leq\eta_C$, and \add{the auto-correlation is bounded by \eqref{eq:autocor_bound_3} in the appendix.}
\remove{
\begin{align}
    &\Bigg\|\mathds{E}\Bigg[\Bigg(C_n\textstyle\sum_{j=1}^{n}\AL_{(n-1,n-j+1)} L_{n-j}C_{n-j}\times\nonumber\\
    &\qquad\times\textstyle\sum_{\substack{k=j+1\\k\neq m^\prime}}^n\AK_{(n-j-1,n-k+1)}\omega_{n-k}\Bigg)\times\nonumber\\
    &\qquad\times\Bigg(C_{n+i}\textstyle\sum_{j=1}^{n+i}\AL_{(n+i-1,n+i-j+1)} L_{n+i-j}C_{n+i-j}\times\nonumber\\
    &\qquad\times\textstyle\sum_{\substack{k=j+1\\k\neq m^\prime}}^{n+i}\AK_{(n+i-j-1,n+i-k+1)}\omega_{n+i-k}\Bigg)^\T\Bigg]\Bigg\|=\nonumber\\
    &=\Bigg\|\textstyle\sum_{j=1}^{n}\sum_{\ell=1}^{n+i}C_n\AL_{(n-1,n-j+1)} L_{n-j}C_{n-j}\times\nonumber\\
    &\qquad\times\textstyle\sum_{\substack{k=\max\{j+1,\ell+1\}\\k\neq m^\prime}}^n\AK_{(n-j-1,n-k+1)}\Sigma_{\omega,n-k}\times\nonumber\\
    &\qquad\times\AK_{(n+i-\ell-1,n-k+1)}^\T C_{n+i-\ell}^\T L_{n+i-\ell}^\T\times\nonumber\\
    &\qquad\times\AL_{(n+i-1,n+i-\ell+1)}^\T C_{n+i}^\T\Bigg\|\leq\nonumber\\
    &\leq\textstyle\sum_{j=1}^{n}\sum_{\ell=1}^{n}\eta_C^4\eta_L^2\eta_{A}^{\ell+j-2}\times\nonumber\\
    &\qquad\textstyle\times\sum_{\substack{k=\max\{j+1,\ell+1\}\\k\neq m^\prime}}^n\eta_{A}^{2k-j-\ell-2+i}\eta_\omega\leq\nonumber\\
    &\leq\eta_A^{i-4}\eta_C^4\eta_L^2\eta_\omega2\textstyle\sum_{j=1}^{\infty}\sum_{\ell=j}^\infty\sum_{k=\ell+1}^\infty \eta^{2k}=\textstyle\frac{2\eta_A^i\eta_C^4\eta_L^2\eta_\omega}{(1-\eta_A^2)^3},\label{eq:autocor_bound_3}
\end{align}
where the first equality comes from evaluating the expectation, the first inequality comes from distributing the norm using by triangle inequality and the submultiplicative property of the spectral norm then using the individual upper bounds, the second inequality  comes from rearranging the sum and allowing the index to go to infinity, and the final equality comes from evaluating the geometric series.  
}
Now if 
\begin{align}
    &\plim{i}\textstyle\frac{1}{i}\sum_{n=0}^{i-1}C_n\sum_{j=1}^{m^\prime-1}\AL_{(n-1,n-j+1)} L_{n-j}C_{n-j}\times\nonumber\\
    &\quad\times\AK_{(n-j-1,n-m^\prime+1)}\omega_{n-m^\prime}\times\nonumber\\
    &\quad\times\omega_{n-m^\prime}^\T \AK_{n-1,n-k+1}^\T C_n^\T=0_r,\label{eq:st_noise_delta_correlated_portion}
\end{align}
we have completed the proof.
To show this, we show that the trace of the matrix converges to 0 for each value of $j$.
\begin{align}
    &\plim{i}\textstyle\frac{1}{i}\sum_{n=0}^{i-1}\big(\omega_{n-m^\prime}^\T\AK_{(n-1,n-m^\prime+1)}^\T C_n^\T\times\nonumber\\
    &\hspace{0.5in}\times C_n\AL_{(n-1,n-j+1)}L_{n-j}C_{n-j}\times\nonumber\\
    &\hspace{0.5in}\times\AK_{(n-j-1,n-m^\prime+1)}\omega_{n-m^\prime}\big)\leq\nonumber\\
    &\leq\plim{i}\big(\textstyle\frac{1}{i}\sum_{n=0}^{i-1}\left\|C_n\AK_{(n-1,n-m^\prime+1)}\omega_{n-m^\prime}\right\|^2\big)^{1/2}\times\nonumber\\
    &\hspace{0.5in}\times\big(\textstyle\frac{1}{i}\sum_{n=0}^{i-1}\big\|C_n\AL_{(n-1,n-j+1)}L_{n-j}\times\nonumber\\
    &\hspace{0.5in}\times C_{n-j}\AK_{(n-j-1,n-m^\prime+1)}\omega_{n-m^\prime}\big\|^2\big)^{1/2}
\end{align}
where the inequality follow from the Cauchy Schwarz Inequality. 
Let $\epsilon,\tau>0$ be chosen arbitrarily.
Note that by Markov's Inequality
\begin{align}
    &\mathds{P}\Big(\textstyle\frac{1}{i}\sum_{n=0}^{i-1}\left\|C_n\AK_{(n-1,n-m^\prime+1)}\omega_{n-m^\prime}\right\|^2\geq\nonumber\\
    &\hspace{1in}\geq\textstyle\frac{2\eta_C^2\eta_{A1}^{2(m^\prime-1)}\eta_\omega}{1-\tau}\Big)\leq\nonumber\\
    &\hspace{0.05in}\leq\textstyle\frac{(1-\tau)\mathds{E}\left[\frac{1}{i}\sum_{n=0}^{i-1}\left\|C_n\AK_{(n-1,n-m^\prime+1)}\omega_{n-m^\prime}\right\|^2\right]}{2\eta_C^2\eta_{A1}^{2(m^\prime-1)}\eta_\omega}\leq\nonumber\\
    &\hspace{0.05in}\leq\textstyle\frac{(1-\tau)\eta_C^2\eta_{A1}^{2(m^\prime-1)}\eta_\omega}{2\eta_C^2\eta_{A1}^{2(m^\prime-1)}\eta_\omega}=\frac{1-\tau}{2}.
\end{align}
Furthermore by our choice of $m^\prime$, we have that there exists an $N$ such that $i>N$ implies
\begin{align}
    &\mathds{P}\Big(\textstyle\frac{1}{i}\sum_{n=0}^{i-1}\left\|C_{n-j}\AK_{(n-j-1,n-m^\prime+1)}\omega_{n-m^\prime}\right\|^2\leq\nonumber\\
    &\hspace{0.7in}\leq\textstyle\frac{\epsilon^2}{2\eta_C^4\eta_{A1}^{2(m^\prime-1)}\eta_{A2}^{2(j-1)}\eta_L^2\eta_\omega}\Big)\geq\frac{\tau+1}{2}.
\end{align}
Finally, applying Theorem \ref{thm:split_prob_mult_2}
\begin{align}
    &\mathds{P}\bigg(\bigg(\textstyle\frac{1}{i}\sum_{n=0}^{i-1}\left\|C_n\AK_{(n-1,n-m^\prime+1)}\omega_{n-m^\prime}\right\|^2\bigg)^{1/2}\times\nonumber\\
    &\hspace{0.4in}\times\bigg(\textstyle\frac{1}{i}\sum_{n=0}^{i-1}\big\|C_n\AL_{(n-1,n-j+1)}L_{n-j}\times\nonumber\\
    &\hspace{0.4in}\times C_{n-j}\AK_{(n-j-1,n-m^\prime+1)}\omega_{n-m^\prime}\big\|^2\bigg)^{1/2}\leq\epsilon\bigg)\geq\nonumber\\
    &\geq\mathds{P}\bigg(\textstyle\frac{1}{i}\sum_{n=0}^{i-1}\left\|C_{n-j}\AK_{(n-j-1,n-m^\prime+1)}\omega_{n-m^\prime}\right\|^2\nonumber\\
    &\hspace{0.4in}\leq\textstyle\frac{\epsilon^2}{2\eta_C^4\eta_{A1}^{2(m^\prime-1)}\eta_{A2}^{2(j-1)}\eta_L^2\eta_\omega}\bigg)+\nonumber\\
    &\hspace{0.4in}+\bigg(1-\mathds{P}\bigg(\textstyle\frac{1}{i}\sum_{n=0}^{i-1}\big\|C_n\AK_{(n-1,n-m^\prime+1)}\times\nonumber\\
    &\hspace{0.4in}\times\omega_{n-m^\prime}\big\|^2\geq 2\eta_C^2\eta_{A1}^2\eta_\omega\bigg)\bigg)-1\geq\\
    &\geq \frac{\tau+1}{2}+1-\frac{1-\tau}{2}-1=\tau.
\end{align}
Therefore \eqref{eq:st_noise_delta_correlated_portion} must hold.
\end{proof}

Having proven several intermediate results, we are now able to formally prove Theorem \ref{thm:LTV_Asymptotic_Main_Result}.
\begin{proof}(Theorem \ref{thm:LTV_Asymptotic_Main_Result})
When no attack is present, \eqref{eq:ltv_watermark_correlation_test} holds using Theorem \ref{thm:alpha} since $\alpha$ is equal to 0. 
Furthermore, \eqref{eq:ltv_covariance_test} holds since $\delta=\overline{\delta}$.

Now assume that an attack of non-zero asymptotic power is present and consider the following cases.\\
Case 1 ($\alpha\neq0$): Using Theorem \ref{thm:alpha}, \eqref{eq:ltv_watermark_correlation_test} does not hold.\\
Case 2 ($\alpha=0$): Note, \eqref{eq:ltv_covariance_test} implies zero asymptotic attack power as follows.
\begin{align}
    \eqref{eq:ltv_covariance_test}\underset{\text{Thm. \ref{thm:equiv_attack_covariance}}}{\Longleftrightarrow}\eqref{eq:attack_additive_covariance}~
    \begin{split}
    \underset{\text{Thm. \ref{thm:me_noise}}}{\Longrightarrow}\eqref{eq:att_equiv_attack_noise_claim}\\
    \underset{\text{Thm. \ref{thm:st_noise}}}{\Longrightarrow}\eqref{eq:att_equiv_attack_state_claim}
    \end{split}
~\underset{\text{Thm. \ref{thm:att_equiv}}}{\Longleftrightarrow}
    \left(\begin{matrix}
    \text{zero asymptotic}\\
    \text{attack power}
    \end{matrix}\right)\nonumber
\end{align}
Under our assumption of non-zero  asymptotic power, the contrapositive implies that \eqref{eq:ltv_covariance_test} does not hold.
\end{proof}
\section{Implementable Statistical Tests}\label{sec:LTV_practical}
While Section \ref{sec:LTV_theory} provides a necessary background for LTV Dynamic Watermarking, infinite limits are not well suited for real time attack detection. 
This section derives a statistical test using a sliding window approach.
Let 
\begin{align}
    \psi_n&=\begin{bmatrix}V_n(C_n\hat{x}_n-y_n)\\ e_{n-1}\end{bmatrix}\\\intertext{and}
    Q_n&=[\psi_{n-\ell}~\hdots~\psi_{n}][\psi_{n-\ell}~\hdots~\psi_{n}]^\T.
\end{align}
where $\ell+1$ is the window size, $\ell\in\mathbb{N}$, and $\ell\geq q+r-1$.
% Then $P_n\sim\mathcal{N}_{(\ell+1)\times(q+r)}(0_{(\ell+1)\times(q+r)},S,G_n)$ where
% \begin{align}
% S&=\begin{bmatrix}I_r & 0_{r\times q}\\0_{q\times r} & \Sigma_e\end{bmatrix}\\
% [G_n]_{\{i,j\}}&=\frac{\mathds{E}[\psi_{n+i-(\ell+1)}^\T\psi_{n+j-(\ell+1)}]}{\text{tr}(S)}\label{eq:scale_autocorrelation_matrices}
% \end{align}
% and $[G_n]_{\{i,j\}}$ is used to denote the element of $G_n$ in the $i^\text{th}$ row and $j^\text{th}$ column. We can calculate the elements of $G_n$ as 
% \begin{align}
% &\frac{\mathds{E}[\psi_{j+i}^\T\psi_{j}]}{tr(S)}=\frac{\mathds{E}[\psi_{j}^\T\psi_{j+i}]}{tr(S)}=\\
% &=\frac{\mathds{E}[\delta_j^\T C_j^\T V_j^\T V_{j+i}C_{j+i}\AL_{j+i-1}\hdots\AL_{j}\delta_j]}{tr(S)}+\nonumber\\
% &\hspace{0.5in}+\frac{\mathds{E}[z_j^\T V_j^\T V_{j+i}C_{j+i}\AL_{j+i-1}\hdots\AL_{j+1}L_jz_j]}{tr(S)}=\\
% &=\frac{tr(C_j^\T V_j^\T V_{j+i}C_{j+i}\AL_{j+i-1}\hdots\AL_{j}\Sigma_{\delta,j})}{tr(S)}+\nonumber\\
% &\hspace{0.5in}+\frac{tr(V_j^\T V_{j+i}C_{j+i}\AL_{j+i-1}\hdots\AL_{j+1}L_j\Sigma_{z,j}
% )}{tr(S)}\label{eq:autocorrelation_matrix_elements}
% \end{align}
% \mattp{pretty sure I need to add the trace of $\Sigma_e$ to both.}
% Finally we have that \cite{Kollo2005}
% \begin{align}
%     Q_n=P_n^\T G_n^{-1}P_n\sim\mathcal{W}_{q+r}(S,\ell+1)
% \end{align}
Note, $\psi_n$ is asymptotically uncorrelated and identically distributed such that $\psi_n\sim\mathcal{N}(0_{q+r\times1},S)$, for $n=1,2,3,\cdots$ where
\begin{align}
    S=\begin{bmatrix}I_r & 0_{r\times q}\\0_{q\times r} & \Sigma_e\end{bmatrix}.
\end{align}
Therefore, under the assumption of no attack, the distribution of $Q_n$ approaches a Wishart distribution with $\ell+1$ degrees of freedom and scale matrix $S$ as $\ell$ goes to infinity.
Furthermore, for a generalized replay attack with non-zero asymptotic power, Theorem \ref{thm:LTV_Asymptotic_Main_Result} proves that the scale matrix for $Q_n$ is no longer $S$ since either \eqref{eq:ltv_watermark_correlation_test} or \eqref{eq:ltv_covariance_test} is not satisfied. 
The Wishart distribution can then be used to define a statistical test using the negative log likelihood of the scale matrix $S$ given the sampled matrix $Q_n$:
\begin{align}
\mathcal{L}(Q_n)=(q+r-\ell)\log(|Q_n|)+tr(S^{-1}Q_n).
\end{align}

In theory, if the process and measurement noise covariances $\Sigma_{w,n}$ and $\Sigma_{z,n}$ are known, $V_n$ can be calculated using \eqref{eq:observer_error_covariance_normal}-\eqref{eq:residual_normalizer}.
% and $G_n$ can be calculated using \eqref{eq:scale_autocorrelation_matrices}-\eqref{eq:autocorrelation_matrix_elements}
In practice, these covariances are difficult to estimate which can lead to error in the estimate of $V_n$. 
To reduce this error, $V_n$ can be directly estimated using an ensemble average of $i$ realizations such that 
\begin{align}
V_n\approx\left(\textstyle\frac{1}{i}\sum_{j=1}^{i}(C_n\hat{x}_n^{(j)}-y_n^{(j)})(C_n\hat{x}_n^{(j)}-y_n^{(j)})^\T\right)^{-1/2}
\end{align}
% \begin{align}
% V_n&\approx\left(\frac{1}{i}\sum_{j=1}^{i}(C_n\hat{x}_n^{(j)}-y_n^{(j)})(C_n\hat{x}_n^{(j)}-y_n^{(j)})^\T\right)^{-1/2}\\
% G_n&\approx\frac{1}{i}\sum_{j=1}^i \frac{P_n^{(j)}(P_n^{(j)})^\T}{\text{tr}(S)}.
% \end{align}
where the superscript $(j)$ is the index of the realization.
This approximation is appropriate since by the weak law of large numbers we have that when no attack is present
\begin{align}
    \plim{i}~\textstyle\frac{1}{i}\sum_{j=1}^{i}&(C_n\hat{x}_n^{(j)}-y_n^{(j)})(C_n\hat{x}_n^{(j)}-y_n^{(j)})^\T=\nonumber\\
    &=C_n\Sigma_{\delta,n}C_n^\T+\Sigma_{z,n}
\end{align}
and $V_n$ is defined as in \eqref{eq:residual_normalizer}.

\section{Simulated Results}\label{sec:sim}
To provide proof of concept, we use a simplified car model
\begin{align}
    \begin{bmatrix}x\\y\\\psi\\v\\\dot{\psi}\end{bmatrix}=
    \begin{bmatrix}v\cos(\psi)\\v\sin(\psi)\\\dot{\psi}\\a\\\ddot{\psi}\end{bmatrix},\label{eq:dubins}
\end{align}
where the car has ground plane coordinates $(x,y)$, heading $\psi$, forward velocity $v$, and angular velocity $\dot{\psi}$.
Using the desired trajectory shown in Figure \ref{fig:LTV_attacked}, \eqref{eq:dubins} is linearized and discretized using a step size of 0.05 and zero order hold on the current state and input.
The controller and observer for the resulting LTV system are found using a linear quadratic regulator (LQR) to stabilize the system.
Furthermore, the process and measurement noise covariances are chosen such that they scale linearly with the velocity.

To compare LTI and LTV Dynamic Watermarking, a time invariant matrix normalization factor is calculated using the average of the residual covariance, while the time-varying matrix normalization factor is calculated using \eqref{eq:observer_error_covariance_normal}-\eqref{eq:residual_normalizer}. 
For both cases, we run 100 simulations with a window size of 20 and calculate the test metric and the average test metric as shown in Figure \ref{fig:compare}.
Note, while the LTV Dynamic Watermarking metric remains consistent over the entire simulation, the LTI counterpart has a repeatable time-varying pattern.

\begin{figure}
    \centering
    \includegraphics[width=\columnwidth]{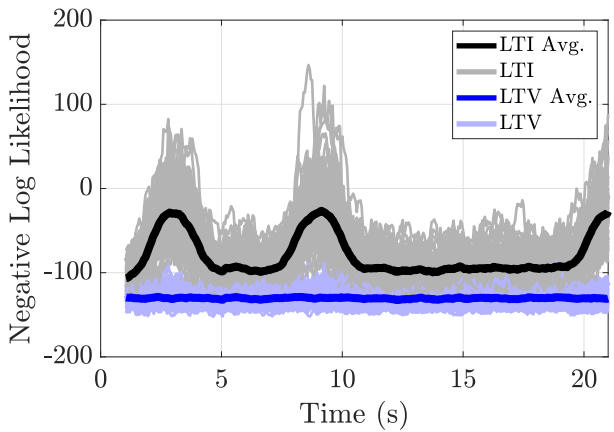}
    \caption{Simulated LTI and LTV Dynamic Watermarking test metrics for LTV car model under no attack}
    \label{fig:compare}
\end{figure}

Using the un-attacked data, a threshold for the LTV case is found such that the rate at which false alarms occur does not exceed once per every 50 seconds of run time.
Next consider an attack model satisfying \eqref{eq:state_update}-\eqref{eq:observer_error_update_attack}, with $\alpha$ equal to $-1$ and the measurement and process noise matching that of the true system.
The results of this attack on the system, and the ability of LTV Dynamic Watermarking to quickly detect it, are shown in Figure \ref{fig:LTV_attacked}.

\section{Conclusion} 
\label{sec:conclusion}
This paper derives Dynamic Watermarking for LTV systems, and provides asymptotic guarantees in addition to implementable tests.
A LTV generalized replay attack is defined and shown to be detectable by the Dynamic Watermarking method developed in this work.
Furthermore, a vehicle model with LTV Dynamic Watermarking is simulated to provide proof of concept of the implementable tests.
Using these simulations, the LTV Dynamic Watermarking is compared to its LTI counterpart and is shown to provide a more consistent test metric.

\bibliographystyle{IEEEtran}
\bibliography{collection}

\appendix
\section{Statistical Background}\label{sec:statistical background}
This section outlines the relevant background in statistics used in the paper\add{ and provides a few longer equations removed from proofs for readability}.
\subsection{Statistical Background}
First, we provide inequalities for functions of random variables using the following three theorems.
\begin{theorem}\label{thm:split_prob_addition}
Let $(a_i)_{i=1}^s$ be a finite set of random variables then 
\begin{align}
    \mathds{P}\left(\textstyle\sum_{i=1}^s a_i>\epsilon\right)\leq\textstyle\sum_{i=1}^s\mathds{P}\left(a_i>\frac{\epsilon}{s}\right).
\end{align}
\end{theorem}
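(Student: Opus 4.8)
The plan is to prove this by a straightforward union-bound argument on the complementary events. First I would observe that if the sum $\sum_{i=1}^s a_i$ exceeds $\epsilon$, then at least one of the summands $a_i$ must exceed $\epsilon/s$; for otherwise, if $a_i \le \epsilon/s$ for every $i$, then $\sum_{i=1}^s a_i \le s \cdot (\epsilon/s) = \epsilon$, contradicting the hypothesis. In other words, the event $\{\sum_{i=1}^s a_i > \epsilon\}$ is contained in the union $\bigcup_{i=1}^s \{a_i > \epsilon/s\}$.

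From that set inclusion, the result follows immediately by monotonicity of probability and countable (here finite) subadditivity: $\mathds{P}\left(\sum_{i=1}^s a_i > \epsilon\right) \le \mathds{P}\left(\bigcup_{i=1}^s \{a_i > \epsilon/s\}\right) \le \sum_{i=1}^s \mathds{P}\left(a_i > \frac{\epsilon}{s}\right)$. No independence or integrability assumptions are needed, and the argument does not even require $\epsilon > 0$, though that is the intended regime.

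There is no real obstacle here — the only thing to be slightly careful about is the direction of the containment (showing the sum-event sits \emph{inside} the union, not the other way around), which is what makes the union bound applicable. I would present the set-inclusion step explicitly via the contrapositive as above, then invoke subadditivity to conclude.

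\begin{proof}(Theorem \ref{thm:split_prob_addition})
Suppose that $a_i \le \frac{\epsilon}{s}$ for every $i \in \{1,\dots,s\}$. Then $\sum_{i=1}^s a_i \le \sum_{i=1}^s \frac{\epsilon}{s} = \epsilon$. Taking the contrapositive, if $\sum_{i=1}^s a_i > \epsilon$ then $a_i > \frac{\epsilon}{s}$ for at least one $i$, so
\begin{align}
    \left\{\textstyle\sum_{i=1}^s a_i > \epsilon\right\} \subseteq \textstyle\bigcup_{i=1}^s \left\{a_i > \frac{\epsilon}{s}\right\}.
\end{align}
By monotonicity of probability and finite subadditivity,
\begin{align}
    \mathds{P}\left(\textstyle\sum_{i=1}^s a_i > \epsilon\right) \le \mathds{P}\left(\textstyle\bigcup_{i=1}^s \left\{a_i > \frac{\epsilon}{s}\right\}\right) \le \textstyle\sum_{i=1}^s \mathds{P}\left(a_i > \frac{\epsilon}{s}\right),
\end{align}
which completes the proof.
\end{proof}
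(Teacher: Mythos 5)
Your proof is correct and follows essentially the same route as the paper's: a contrapositive argument giving the set inclusion $\{\sum_i a_i > \epsilon\} \subseteq \bigcup_i \{a_i > \epsilon/s\}$, followed by monotonicity and Boole's inequality. The only (immaterial) difference is that you assume $a_i \le \epsilon/s$ in the contrapositive where the paper assumes strict inequality, which if anything handles the boundary case slightly more cleanly.
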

\begin{proof}
Assume $a_i<\frac{\epsilon}{s}~\forall i$. 
This would imply that
\begin{align}
    \textstyle\sum_{i=1}^s a_i<\textstyle\sum_{i=1}^s\frac{\epsilon}{s}=\epsilon.
\end{align}
Therefore,
\begin{align}
    \left\{\textstyle\sum_{i=1}^s a_i>\epsilon\right\}\subseteq\textstyle\bigcup_{i=1}^s\left\{a_i>\frac{\epsilon}{s}\right\}
\end{align}
Furthermore,
\begin{align}
    \mathds{P}\left(\textstyle\sum_{i=1}^s a_i>\epsilon\right)&\leq\mathds{P}\left(\textstyle\bigcup_{i=1}^s\left\{a_i>\frac{\epsilon}{s}\right\}\right)\leq\nonumber\\
    &\leq\textstyle\sum_{i=1}^s\mathds{P}\left(a_i>\frac{\epsilon}{s}\right).
\end{align}
where the first inequality comes from the inclusion of the events and the final inequality comes from Boole's Inequality.
\end{proof}
\begin{theorem}\label{thm:split_prob_mult}
Let $(a_i)_{i=1}^s$ be a finite set of random variables then 
\begin{align}
    \mathds{P}\left(\textstyle\prod_{i=1}^s |a_i|>\epsilon\right)\leq\textstyle\sum_{i=1}^s\mathds{P}\left(|a_i|>\epsilon^{\frac{1}{s}}\right).
\end{align}
\end{theorem}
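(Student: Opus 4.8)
The plan is to mirror the argument used for Theorem \ref{thm:split_prob_addition}, replacing the additive bound with a multiplicative one. Assume throughout that $\epsilon>0$, so that $\epsilon^{1/s}$ is well defined and positive. The key observation is a contrapositive statement about the complementary events: if $|a_i|\leq\epsilon^{1/s}$ for every $i\in\{1,\dots,s\}$, then, since each factor is nonnegative, the product satisfies $\prod_{i=1}^s|a_i|\leq\prod_{i=1}^s\epsilon^{1/s}=\epsilon$.

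Taking the contrapositive of that implication gives the event inclusion
\begin{align}
    \left\{\textstyle\prod_{i=1}^s|a_i|>\epsilon\right\}\subseteq\textstyle\bigcup_{i=1}^s\left\{|a_i|>\epsilon^{1/s}\right\}.
\end{align}
From here the proof is immediate: monotonicity of probability under set inclusion followed by Boole's inequality yields
\begin{align}
    \mathds{P}\left(\textstyle\prod_{i=1}^s|a_i|>\epsilon\right)\leq\mathds{P}\left(\textstyle\bigcup_{i=1}^s\left\{|a_i|>\epsilon^{1/s}\right\}\right)\leq\textstyle\sum_{i=1}^s\mathds{P}\left(|a_i|>\epsilon^{1/s}\right),
\end{align}
which is the claimed bound.

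There is essentially no hard step here; the only point requiring a moment's care is that each $|a_i|$ is nonnegative, which is what licenses multiplying the inequalities $|a_i|\leq\epsilon^{1/s}$ term by term without sign reversals, and the implicit restriction to $\epsilon>0$ so that the $s$-th root is meaningful. Everything else is a direct transcription of the union-bound structure already established in Theorem \ref{thm:split_prob_addition}.
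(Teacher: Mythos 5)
Your proposal is correct and follows essentially the same route as the paper: the complementary-event implication $|a_i|\leq\epsilon^{1/s}$ for all $i$ forcing $\prod_i|a_i|\leq\epsilon$, the resulting event inclusion, and then monotonicity plus Boole's inequality exactly as in Theorem \ref{thm:split_prob_addition}. The added remarks about nonnegativity of $|a_i|$ and the implicit assumption $\epsilon>0$ are sensible but do not change the argument.
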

\begin{proof}
Assume $|a_i|<\epsilon^{\frac{1}{s}}~\forall i$. This would imply that
\begin{align}
    \textstyle\prod_{i=1}^s |a_i|<\textstyle\prod_{i=1}^s \epsilon^{\frac{1}{s}}=\epsilon.
\end{align}
% Therefore,
% \begin{align}
%     \left\{\prod_{i=1}^s |a_i|>\epsilon\right\}\subseteq\cup_{i=1}^s\left\{|a_i|>\epsilon^{\frac{1}{s}}\right\}
% \end{align}
% Furthermore,
% \begin{align}
%     \mathds{P}\left(\prod_{i=1}^s |a_i|>\epsilon\right)&\leq\mathds{P}\left(\cup_{i=1}^s\left\{|a_i|>\epsilon^{\frac{1}{s}}\right\}\right)\leq\nonumber\\
%     &\leq\sum_{i=1}^s\mathds{P}\left(|a_i|>\epsilon^{\frac{1}{s}}\right).
% \end{align}
% where the first inequality comes from the inclusion of the events and the final inequality comes from Boole's Inequality.
The remainder of the proof follows closely to Theorem \ref{thm:split_prob_addition}.
\end{proof}

\begin{theorem}\label{thm:split_prob_mult_2}
Let $a$ and $b$ be random variables then for $\epsilon,\gamma>0$ we have
\begin{align}
    \mathds{P}\left(|ab|<\epsilon\right)\geq\mathds{P}\left(|a|<\gamma\right)+\mathds{P}\left(|b|<\textstyle\frac{\epsilon}{\gamma}\right)-1.
\end{align}
\end{theorem}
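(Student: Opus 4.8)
The plan is to mimic the structure of the proof of Theorem~\ref{thm:split_prob_addition}: first establish a deterministic inclusion between the relevant events, and then pass to probabilities using an elementary bound on the probability of an intersection.

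First I would observe that on the event $\{|a| < \gamma\} \cap \{|b| < \epsilon/\gamma\}$ we have $|ab| = |a|\,|b| < \gamma \cdot \tfrac{\epsilon}{\gamma} = \epsilon$, where the strict inequality relies on $\gamma > 0$ (so that multiplying the two one-sided bounds preserves the inequality) and on $\epsilon/\gamma$ being a well-defined positive number, so that the event $\{|b| < \epsilon/\gamma\}$ makes sense. Hence
\begin{align}
\{|a| < \gamma\} \cap \{|b| < \tfrac{\epsilon}{\gamma}\} \subseteq \{|ab| < \epsilon\}. \nonumber
\end{align}

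Next I would take complements and apply Boole's inequality. Writing $E = \{|a| < \gamma\}$ and $F = \{|b| < \epsilon/\gamma\}$, the inclusion above gives $\mathds{P}(|ab| < \epsilon) \geq \mathds{P}(E \cap F)$, while
\begin{align}
1 - \mathds{P}(E \cap F) = \mathds{P}(E^c \cup F^c) \leq \mathds{P}(E^c) + \mathds{P}(F^c) = 2 - \mathds{P}(E) - \mathds{P}(F), \nonumber
\end{align}
so that $\mathds{P}(E \cap F) \geq \mathds{P}(E) + \mathds{P}(F) - 1$. Chaining this with $\mathds{P}(|ab| < \epsilon) \geq \mathds{P}(E \cap F)$ gives the stated bound.

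There is no substantive obstacle here; the argument is purely measure-theoretic and, like the two preceding theorems, requires no assumptions on the joint distribution of $a$ and $b$. The only points demanding any attention are the use of $\gamma > 0$ to justify the deterministic implication in the first step and the standard Fréchet-type lower bound $\mathds{P}(E \cap F) \geq \mathds{P}(E) + \mathds{P}(F) - 1$ obtained from $\mathds{P}(E^c \cup F^c) \leq 1$.
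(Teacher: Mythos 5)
Your proposal is correct and follows essentially the same route as the paper: the event inclusion $\{|a|<\gamma\}\cap\{|b|<\epsilon/\gamma\}\subseteq\{|ab|<\epsilon\}$ followed by the elementary lower bound $\mathds{P}(E\cap F)\geq\mathds{P}(E)+\mathds{P}(F)-1$ (the paper obtains it by inclusion--exclusion with the union bounded by 1, you by complements and Boole's inequality, which is the same estimate). No gaps.
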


\begin{proof}
Note that 
\begin{align}\label{eq:split_prob_mult_bound1}
\mathds{P}(|ab|<\epsilon)\geq \mathds{P}\left(\{|a|<\gamma\}\cap\left\{|b|<\textstyle\frac{\epsilon}{\gamma}\right\}\right)
\end{align}
since $|a|<\gamma$ and $|b|<\epsilon/\gamma$ implies $|ab|<\epsilon$. 
By expanding the RHS of \eqref{eq:split_prob_mult_bound1} using inclusion exclusion and bounding the union term by 1, we get
\begin{align}
    &\mathds{P}(|ab|<\epsilon)
    % \geq \mathds{P}(|a|<\gamma)+\nonumber\\
    % &\qquad\mathds{P}\left(|b|<\frac{\epsilon}{\gamma}\right)-\mathds{P}\left(\{|a|<\gamma\}\cup\left\{|b|<\frac{\epsilon}{\gamma}\right\}\right)\geq\nonumber\\
    % &\quad
    \geq\mathds{P}(|a|<\gamma)+\mathds{P}\left(|b|<\textstyle\frac{\epsilon}{\gamma}\right)-1.
\end{align}
% since the probability of the union is at most 1.
\end{proof}

% Next we define what it means for a random variable to converge in probability. 
% \begin{df}\cite[Definition 7.2.1c]{grimmett2001probability}
% Given a sequence of random variables $(a_i)_{i=1}^\infty$ which need not be scalar and a constant $a$ we say that $a_i$ \underline{converges in probability} to $a$ denoted
% \begin{align}
% \plim{i}~a_i=a
% \end{align}
% if 
% \begin{align}
%     \lim_{i\rightarrow \infty}~\mathds{P}(\|a_i-a\|>\epsilon)=0\quad\forall~\epsilon>0.
% \end{align}
% \end{df}

% \begin{rmk}
% When considering convergence in probability we will use the euclidean norm for scalars and vectors and the spectral norm for matrices.
% \end{rmk}

It is often helpful to split a probabilistic limit into components of the underlying random variable. 
While this is not possible for all cases, we provide sufficient conditions here.
\begin{theorem}\label{thm:split_prob_limit}
Given sequences of random variables $a_i$ and $b_i$, and constants $a$ and $b$, suppose that 
\begin{align}
    \plim{i}~a_i+b_i=a+b ~\text{ and }~ \plim{i}~a_i=a\label{eq:split_prob_limit_suppose}
\end{align}
then
\begin{align}
    \plim{i}~b_i=b\label{eq:split_prob_limit_claim}.
\end{align}
\end{theorem}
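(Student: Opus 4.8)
The plan is to reduce the statement to the elementary fact that convergence in probability is stable under subtraction, carried out through an explicit event inclusion followed by Boole's inequality. First I would record the algebraic identity
\[
b_i-b=\bigl(a_i+b_i-(a+b)\bigr)-\bigl(a_i-a\bigr),
\]
so that bounding $|b_i-b|$ reduces to simultaneously bounding the two quantities $|a_i+b_i-(a+b)|$ and $|a_i-a|$. By the two hypotheses in \eqref{eq:split_prob_limit_suppose}, each of these converges to $0$ in probability.

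Next, fix an arbitrary $\epsilon>0$. Using the triangle inequality on the identity above, if both $|a_i+b_i-(a+b)|\leq \epsilon/2$ and $|a_i-a|\leq \epsilon/2$ hold, then $|b_i-b|\leq\epsilon$; contrapositively, this gives the event inclusion
\[
\{|b_i-b|>\epsilon\}\subseteq\{|a_i+b_i-(a+b)|>\epsilon/2\}\cup\{|a_i-a|>\epsilon/2\}.
\]
Applying $\mathds{P}$ and Boole's inequality (equivalently, Theorem \ref{thm:split_prob_addition} with $s=2$ applied to the nonnegative random variables $|a_i+b_i-(a+b)|$ and $|a_i-a|$) yields
\[
\mathds{P}\bigl(|b_i-b|>\epsilon\bigr)\leq\mathds{P}\bigl(|a_i+b_i-(a+b)|>\epsilon/2\bigr)+\mathds{P}\bigl(|a_i-a|>\epsilon/2\bigr).
\]

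Finally I would let $i\to\infty$. By the definition of convergence in probability, the first hypothesis in \eqref{eq:split_prob_limit_suppose} forces $\mathds{P}(|a_i+b_i-(a+b)|>\epsilon/2)\to 0$ and the second forces $\mathds{P}(|a_i-a|>\epsilon/2)\to 0$, so the right-hand side above tends to $0$ and hence $\mathds{P}(|b_i-b|>\epsilon)\to 0$. Since $\epsilon>0$ was arbitrary, this is exactly \eqref{eq:split_prob_limit_claim}. There is essentially no serious obstacle here; the only point requiring a moment's care is fixing a consistent split of the threshold (here $\epsilon/2$ and $\epsilon/2$, though any $\epsilon_1+\epsilon_2=\epsilon$ works) so that the triangle-inequality inclusion is valid, after which the argument is a direct application of tools already recorded in the appendix.
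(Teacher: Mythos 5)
Your proof is correct and follows essentially the same route as the paper: the identity $b_i-b=(a_i+b_i-(a+b))-(a_i-a)$, the triangle-inequality event inclusion, and the union bound (Theorem \ref{thm:split_prob_addition} with the $\epsilon/2$ split), followed by taking limits. No gaps to report.
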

\begin{proof}
Assume \eqref{eq:split_prob_limit_suppose} holds.
Given an $\epsilon>0$,
% and $\tau>0$ let $n$ be chosen such that $i>n$ implies
% \begin{align}
%     \mathds{P}\left(\|a_i-a\|>\frac{\epsilon}{2}\right),~\mathds{P}\left(\|a_i-a+b_i-b\|>\frac{\epsilon}{2}\right)&<\frac{\tau}{2}\label{eq:split_prob_limit_bound}. 
% \end{align}
% Then 
we have that
\begin{align}
    \mathds{P}&\left(\|b_i-b\|>\epsilon\right)\leq\nonumber
    % &\leq\mathds{P}(\|a_i-a+b_i-b\|+\|a_i-a\|>\epsilon)\leq\nonumber\\
    \mathds{P}\left(\|a_i-a+b_i-b\|>\textstyle\frac{\epsilon}{2}\right)+\nonumber\\
    &\qquad+\mathds{P}\left(\|a_i-a\|>\textstyle\frac{\epsilon}{2}\right)\
\end{align}
where the inequality comes from triangle inequality and Theorem \ref{thm:split_prob_addition}.
Since both terms in this upper bound converge to zero, their sum, must as well.
Therefore, \eqref{eq:split_prob_limit_claim} must hold.
\end{proof}

Similarly we can combine probabilistic limits as follows.
\begin{cor}\label{cor:combine_prob_limit}
Consider sequences of random variables $a_i$ and $b_i$ and constants $a$ and $b$.
If
\begin{align}
    \plim{i}~b_i=b ~\text{ and }~ \plim{i}~a_i=a\label{eq:combine_prob_limit_suppose}
\end{align}
then 
\begin{align}
    \plim{i}~a_i+b_i=a+b.\label{eq:combine_prob_limit_claim}
\end{align}
\end{cor}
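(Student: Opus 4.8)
The plan is to reuse the triangle-inequality-plus-union-bound technique from the proof of Theorem \ref{thm:split_prob_limit}, but run in the forward direction. Assume \eqref{eq:combine_prob_limit_suppose} holds and fix an arbitrary $\epsilon>0$. The key deterministic observation is an inclusion of events: by the triangle inequality, whenever $\|a_i+b_i-(a+b)\|>\epsilon$ we must have $\|a_i-a\|>\tfrac{\epsilon}{2}$ or $\|b_i-b\|>\tfrac{\epsilon}{2}$, i.e.
\begin{align}
    \left\{\|a_i+b_i-(a+b)\|>\epsilon\right\}\subseteq\left\{\|a_i-a\|>\tfrac{\epsilon}{2}\right\}\cup\left\{\|b_i-b\|>\tfrac{\epsilon}{2}\right\}.
\end{align}

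From this inclusion, applying Theorem \ref{thm:split_prob_addition} (equivalently, Boole's inequality on the two events above) I would obtain
\begin{align}
    \mathds{P}\left(\|a_i+b_i-(a+b)\|>\epsilon\right)\leq\mathds{P}\left(\|a_i-a\|>\tfrac{\epsilon}{2}\right)+\mathds{P}\left(\|b_i-b\|>\tfrac{\epsilon}{2}\right).
\end{align}
By the hypothesis \eqref{eq:combine_prob_limit_suppose}, each term on the right-hand side tends to $0$ as $i\to\infty$, so their sum does as well; hence the left-hand side tends to $0$, and since $\epsilon>0$ was arbitrary this is exactly \eqref{eq:combine_prob_limit_claim}.

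There is no genuine obstacle here: the argument is nothing more than a union bound combined with the triangle inequality, which is why the statement is phrased as a corollary. The only points requiring a word of care are that $a_i,b_i$ (and the constants) may be vector- or matrix-valued, so $\|\cdot\|$ should be read as the appropriate norm, and that the crude split into two pieces at $\tfrac{\epsilon}{2}$ suffices — no finer partition is needed. Finally, iterating the result gives the finite-sum version ($\text{p-lim}$ of a finite sum equals the sum of the $\text{p-lim}$s), which is the form actually used to split summands in the proofs of Theorems \ref{thm:alpha}, \ref{thm:equiv_attack_covariance}, \ref{thm:att_equiv}, and beyond.
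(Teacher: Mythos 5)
Your argument is correct: the triangle inequality gives the event inclusion $\{\|a_i+b_i-(a+b)\|>\epsilon\}\subseteq\{\|a_i-a\|>\tfrac{\epsilon}{2}\}\cup\{\|b_i-b\|>\tfrac{\epsilon}{2}\}$, the union bound (or Theorem \ref{thm:split_prob_addition}) turns this into the probability estimate, and both terms vanish by hypothesis, so \eqref{eq:combine_prob_limit_claim} follows. The paper, however, proves the corollary by a different and shorter route: it does not redo any estimate, but instead reduces the statement to Theorem \ref{thm:split_prob_limit} via the substitution $a_i^\prime=-a_i$, $a^\prime=-a$, $b_i^\prime=a_i+b_i$, $b^\prime=a+b$, for which the hypotheses $\plim{i}(a_i^\prime+b_i^\prime)=a^\prime+b^\prime$ and $\plim{i}a_i^\prime=a^\prime$ are exactly \eqref{eq:combine_prob_limit_suppose}, and the conclusion $\plim{i}b_i^\prime=b^\prime$ is exactly \eqref{eq:combine_prob_limit_claim}. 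The two proofs are equivalent in substance, since the paper's proof of Theorem \ref{thm:split_prob_limit} itself consists of the same triangle-inequality-plus-Theorem-\ref{thm:split_prob_addition} estimate you wrote out; what the paper's reduction buys is brevity and a single location for the analytic estimate, while your direct proof buys self-containedness and makes plain that the corollary is nothing deeper than a union bound, at the cost of repeating the $\epsilon/2$ argument. Your closing remarks (vector/matrix-valued sequences handled by the appropriate norm, and iteration to finitely many summands, which is the form invoked in Theorems \ref{thm:alpha}, \ref{thm:equiv_attack_covariance}, and \ref{thm:att_equiv}) are accurate and consistent with how the paper uses the result.
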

\begin{proof}Let $a_i^\prime=-a_i$, $a^\prime=-a$, $b_i^\prime=a_i+b_i$ and $b^\prime=a+b$.
Note, \eqref{eq:combine_prob_limit_suppose} implies \eqref{eq:split_prob_limit_suppose} is satisfied. 
Therefore, using Theorem \ref{thm:split_prob_limit}
\begin{align}
\plim{i}(a_i+b_i)=\plim{i} b_i^\prime=b^\prime=a+b.
\end{align}
\end{proof}

Since many of the limits in this paper deal with the average outer product of random vectors, it is important to know how and when these limits converge.
The following theorem provides sufficient conditions for convergence.
\begin{theorem}\label{thm:converge_expected}
Consider the sequences of vectors $(f_i)_{i=1}^\infty$ and $(g_i)_{i=1}^\infty$ where $f_i\sim\mathcal{N}(0_{s\times1},\Sigma_{f,i})$ and  $g_i\sim\mathcal{N}(0_{t\times1},\Sigma_{g,i})$.
Let $\eta$ and $\epsilon$ be scalar values such that $0<\eta<\infty$ and $\epsilon>1$.
If 
\begin{align}
    \|\mathds{E}[f_jf_{i}^\T]\|,~\|\mathds{E}[g_jg_{i}^\T]\|,~\|\mathds{E}[f_jg_{i}^\T]\|<\textstyle\frac{\eta}{\epsilon^{|i-j|}}\label{eq:converge_expected_autocorr_bound1},
\end{align}
$\forall~i,j\in\mathbb{N}$, then
\begin{align}
    \plim{i}\quad\textstyle\frac{1}{i}\sum_{j=1}^if_jg_j^\T-\mathds{E}[f_jg_j^\T]=0_{s\times t}.\label{eq:converge_expected_sum}
\end{align}
\end{theorem}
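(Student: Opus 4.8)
The plan is to prove the statement one matrix entry at a time and then reassemble. Fix indices $a\in\{1,\dots,s\}$ and $b\in\{1,\dots,t\}$ and set $X_j=(f_j)_a(g_j)_b$, so that the $(a,b)$ entry of $\frac1i\sum_{j=1}^i\bigl(f_jg_j^\T-\mathds{E}[f_jg_j^\T]\bigr)$ equals $\frac1i\sum_{j=1}^i(X_j-\mathds{E}[X_j])$. Since the $s\times t$ matrix has only finitely many entries, and since every entry of a matrix is bounded in absolute value by its spectral norm, it suffices to show $\plim{i}\frac1i\sum_{j=1}^i(X_j-\mathds{E}[X_j])=0$ for each such pair; Corollary \ref{cor:combine_prob_limit}, applied finitely many times, then upgrades this to convergence of the full matrix to $0_{s\times t}$ in norm.

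For a fixed pair I would control the partial average through its variance and Chebyshev's inequality: for any $\gamma>0$,
$\mathds{P}\bigl(\bigl|\tfrac1i\sum_{j=1}^i(X_j-\mathds{E}[X_j])\bigr|>\gamma\bigr)\le\gamma^{-2}\,\mathrm{Var}\bigl(\tfrac1i\sum_{j=1}^iX_j\bigr)=\gamma^{-2}i^{-2}\sum_{j=1}^i\sum_{k=1}^i\mathrm{Cov}(X_j,X_k)$,
so the entire problem reduces to bounding $\mathrm{Cov}(X_j,X_k)$. This is where Gaussianity enters: because the $f$'s and $g$'s are jointly zero-mean Gaussian, Isserlis' theorem expands $\mathds{E}[(f_j)_a(g_j)_b(f_k)_a(g_k)_b]$ into the three pairings of its four factors, one of which is exactly $\mathds{E}[X_j]\mathds{E}[X_k]$ and hence cancels, leaving $\mathrm{Cov}(X_j,X_k)=\mathds{E}[(f_j)_a(f_k)_a]\,\mathds{E}[(g_j)_b(g_k)_b]+\mathds{E}[(f_j)_a(g_k)_b]\,\mathds{E}[(f_k)_a(g_j)_b]$.

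Each of the four scalar factors here is an entry of one of the correlation matrices $\mathds{E}[f_jf_k^\T]$, $\mathds{E}[g_jg_k^\T]$, $\mathds{E}[f_jg_k^\T]$, $\mathds{E}[f_kg_j^\T]$, so \eqref{eq:converge_expected_autocorr_bound1} together with $|M_{ab}|\le\|M\|$ bounds each by $\eta\,\epsilon^{-|j-k|}$, whence $|\mathrm{Cov}(X_j,X_k)|\le 2\eta^2\epsilon^{-2|j-k|}$. Summing the resulting two-sided geometric series gives $\frac{1}{i^2}\sum_{j,k=1}^i|\mathrm{Cov}(X_j,X_k)|\le\frac{2\eta^2}{i}\sum_{d=-\infty}^{\infty}\epsilon^{-2|d|}=\frac{2\eta^2}{i}\cdot\frac{\epsilon^2+1}{\epsilon^2-1}\to0$, the series converging precisely because $\epsilon>1$. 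Feeding this into Chebyshev's inequality settles the scalar case, and the entrywise-to-matrix reduction finishes the theorem. The one step I expect to need care is the appeal to Isserlis' theorem: it requires the whole collection $\{f_j,g_j\}_{j\ge1}$ to be jointly Gaussian, not merely Gaussian marginally — a property that holds in the watermarking setting because these vectors are linear functionals of a single underlying Gaussian noise process, but which should be recorded as a standing hypothesis.
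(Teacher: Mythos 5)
Your proposal is correct and follows essentially the same route as the paper's proof: the paper also works entrywise (via selector vectors $h_m$, $h_n$), expands the second moment of the averaged entry using the Gaussian fourth-moment (Isserlis/Wick) identity so that the product-of-means term cancels, bounds the surviving cross-covariance products by $\eta^2\epsilon^{-2|j-k|}$ from \eqref{eq:converge_expected_autocorr_bound1}, sums the geometric series to get an $O(1/i)$ variance bound, and concludes with Chebyshev's inequality. Your closing caveat about joint Gaussianity is apt --- the paper's citation of the Gaussian moment formula implicitly assumes it as well --- but it does not change the argument.
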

\begin{proof}
For \eqref{eq:converge_expected_sum} to hold, each of the element must also converge to 0 with probability 1. 
Therefore we will consider an arbitrary element and show it converges using an inequality derived from Chebyshev's inequality.
Selecting the element in an arbitrary row $m$ and column $n$ such that $0\leq m\leq s$ and $0\leq n\leq t$, let 
\begin{align}
    h_m^\T&=\begin{bmatrix}0_{1\times (m-1)}&1&0_{1\times(s-m)}\end{bmatrix}\\
    \intertext{and}
    h_n^\T&=\begin{bmatrix}0_{1\times (n-1)}&1&0_{1\times(t-n)}\end{bmatrix},
\end{align} 
then the sum for this single element can be written as
\begin{align}
    \rho_i=\textstyle\frac{1}{i}\sum_{j=1}^ih_m^\T f_ig_i^\T h_n-h_m^\T\mathds{E}[f_jg_j^\T]h_n.\label{eq:converge_expected_scalar_sum}
\end{align}
In order to use Chebyshev's inequality we must first bound the second moment of $\rho_i$. We start by expanding $\rho_i^2$ using  \eqref{eq:converge_expected_scalar_sum} and canceling like terms to get
% \begin{align}
%     \left|\mathds{E}[\rho_i^2]\right|
%     &=\Bigg|\mathds{E}\Bigg[\frac{1}{i^2}\sum_{j=1}^i\sum_{k=1}^i
%     h_m^\T f_jg_j^\T h_nh_m^\T f_kg_k^\T h_n+\nonumber\\
%     &\qquad-h_m^\T f_jg_j^\T h_nh_m^\T\mathds{E}[f_kg_k^\T]h_n+\nonumber\\
%     &\qquad-h_m^\T f_kg_k^\T h_nh_m^\T\mathds{E}[f_jg_j^\T]h_n+\nonumber\\
%     &\qquad+h_m^\T\mathds{E}[f_jg_j^\T]h_nh_m^\T\mathds{E}[f_kg_k^\T]h_n\Bigg]\Bigg|.
% \end{align}
% Distributing the expectation accross the addition and combining like terms then gives us
\begin{align}
    \left|\mathds{E}[\rho_i^2]\right|
    &=\big|\textstyle\frac{1}{i^2}\sum_{j=1}^i\sum_{k=1}^i\mathds{E}[h_m^\T f_jg_j^\T h_nh_m^\T f_kg_k^\T h_n]+\nonumber\\
    &\qquad-h_m^\T\mathds{E}[f_jg_j^\T]h_nh_m^\T\mathds{E}[f_kg_k^\T]h_n\big|.
\end{align}
Expanding the expectation in the first term using \cite[Equation 2.3.8]{brillinger1981time} and once again canceling like terms results in 
% \begin{align}
%     \left|\mathds{E}[\rho_i^2]\right|
%     &=\Bigg|\frac{1}{i^2}\sum_{j=1}^i\sum_{k=1}^i h_m^\T\mathds{E}[f_jg_j^\T]h_n h_m^\T\mathds{E}[f_kg_k^\T]h_n+\nonumber\\
%     &\qquad+h_m^\T\mathds{E}[f_jg_k^\T]h_n h_m^\T\mathds{E}[f_kg_j^\T]h_n+\nonumber\\
%     &\qquad+h_m^\T\mathds{E}[f_jf_k^\T]h_m h_n^\T\mathds{E}[g_jg_k^\T]h_n+\nonumber\\
%     &\qquad-h_m^\T\mathds{E}[f_jg_j^\T]h_n h_m^\T\mathds{E}[f_kg_k^\T]h_n\Bigg|
% \end{align}
\begin{align}
    \left|\mathds{E}[\rho_i^2]\right|
    &=\big|\textstyle\frac{1}{i^2}\sum_{j=1}^i\sum_{k=1}^i h_m^\T\mathds{E}[f_jg_k^\T]h_n h_m^\T\mathds{E}[f_kg_j^\T]h_n+\nonumber\\
    &\qquad+h_m^\T\mathds{E}[f_jf_k^\T]h_m h_n^\T\mathds{E}[g_jg_k^\T]h_n\big|.
\end{align}
Distributing the norm across the addition and multiplication using triangle inequality and the sub-multiplicative property of the 2 norm we then get the upper bound
\begin{align}
    &\left|\mathds{E}[\rho_i^2]\right|
    \leq \textstyle\frac{1}{i^2}\sum_{j=1}^i\sum_{k=1}^i \|h_m\|^2\|h_n\|^2\|\mathds{E}[f_jg_k^\T]\|\times\nonumber\\
    &\quad\times\|\mathds{E}[f_kg_j^\T]\|+\|h_m\|^2\|h_n\|^2\|\mathds{E}[f_jf_k^\T]\|~\|\mathds{E}[g_jg_k^\T]\|.
\end{align}
Applying the bounds in \eqref{eq:converge_expected_autocorr_bound1} and the fact that $\|h_m\|=\|h_n\|=1$ we can further upper bound resulting in
\begin{align}
    \left|\mathds{E}[\rho_i^2]\right|
    &\leq \textstyle\frac{1}{i^2}\sum_{j=1}^i\sum_{k=1}^i \frac{2\eta^2}{\epsilon^{2|j-k|}}\label{eq:converge_expected_finite_geometric}
\end{align}
Furthermore,
\begin{align}
    \left|\mathds{E}[\rho_i^2]\right|
    &\leq \textstyle\frac{1}{i^2}\sum_{j=1}^i\sum_{k=1}^\infty\frac{4\eta^2}{\epsilon^{2k}}=\frac{4\eta^2}{i(1-\frac{1}{\epsilon^2})}.\label{eq:converge_expected_inf_geometric}
\end{align}
where the inequality comes from the summation in \eqref{eq:converge_expected_inf_geometric} containing all of the summands in \eqref{eq:converge_expected_finite_geometric} and the fact that all summands are non-negative. 

Finally, using this bound and applying Chebyshev's Inequality \cite[Equation 5.32]{Billingsley1995ProbabilityMeasure} we have that, for an arbitrary choice of $\beta>0$,
\begin{align}
    P(|\rho_i|>\beta)\leq\textstyle\frac{\mathds{E}[\rho_i^2]}{\beta^2}=\frac{4\eta^2}{i\beta^2(1-\frac{1}{\epsilon^2})}.
\end{align}
Therefore, $\rho_i$ converges to 0 with probability 1.
Since the matrix element was chosen arbitrarily, \eqref{eq:converge_expected_sum} must hold.
\end{proof}
% \begin{rmk}
% In Theorem \ref{thm:converge_expected}, two special cases are of particular interest.
% When $(f_i)_{i=1}^\infty$ and $(g_i)_{i=1}^\infty$ are uncorrelated, and \eqref{eq:converge_expected_autocorr_bound1} is satisfied, this theorem then provides convergence to the zero matrix. If instead $f_i=g_i~\forall i$,  \eqref{eq:converge_expected_autocorr_bound1} is satisfied, and $\Sigma_{f,i}=\Sigma_f$ is not dependent on $i$, we have that 
% \begin{align}
%     \plim{i}~\frac{1}{i}\sum_{j=1}^i f_if_i^\T=\Sigma_{f}.
% \end{align}
% \end{rmk}

As a direct result of Theorem \ref{thm:converge_expected}, we can also make similar claims for Gaussian sequences that have been multiplied by bounded linear transformations.
\begin{cor}\label{cor:converge_expected_cor}
Consider a pair of sequences of vectors $(f_i)_{i=1}^\infty$ and $(g_i)_{i=1}^\infty$ where $f_i\sim\mathcal{N}(0_{s\times1},\Sigma_{f,i})$ and  $g_i\sim\mathcal{N}(0_{t\times1},\Sigma_{g,i})$.
Furthermore, consider the sequences of time varying matrices $(T_i)_{i=1}^\infty$ and $(U_i)_{i=1}^\infty$, where $T_i\in\mathbb{R}^{s^\prime\times s}$ and $U_i\in\mathbb{R}^{t^\prime\times t}$. 
Assume that 
\begin{align}
    \|T_i\|\leq \eta_T~\text{ and }~\|U_i\|\leq\eta_U.\label{eq:converge_expected_cor_mat_bound}
\end{align}
Let $\eta,\epsilon\in\mathbb{R}$ such that $0<\eta<\infty$ and $\epsilon>1$.
If 
\begin{align}
    \|\mathds{E}[f_jf_{i}^\T]\|,~
    \|\mathds{E}[g_jg_{i}^\T]\|,~
    \|\mathds{E}[f_jg_{i}^\T]\|&<\textstyle\frac{\eta}{\epsilon^{|i-j|}},\label{eq:converge_expected_autocorr_bound2}
\end{align}
$\forall~i,j\in\mathbb{N}$, then
\begin{align}
    \plim{i}\quad\textstyle\frac{1}{i}\sum_{j=1}^iT_jf_jg_j^\T U_j^\T-\mathds{E}T_j\left[f_jg_j^\T \right]U_j^\T=0_{s^\prime\times t^\prime}.\label{eq:converge_expected_cor_sum}
\end{align}
\end{cor}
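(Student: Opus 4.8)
The plan is to reduce Corollary \ref{cor:converge_expected_cor} directly to Theorem \ref{thm:converge_expected} by absorbing the bounded linear transforms into the Gaussian vectors. Define $\tilde f_j = T_jf_j$ and $\tilde g_j = U_jg_j$. Since a deterministic linear image of a jointly Gaussian, mean-zero family is again jointly Gaussian and mean zero, we have $\tilde f_j\sim\mathcal{N}(0_{s^\prime\times1},T_j\Sigma_{f,j}T_j^\T)$ and $\tilde g_j\sim\mathcal{N}(0_{t^\prime\times1},U_j\Sigma_{g,j}U_j^\T)$, and the whole collection $\{\tilde f_j,\tilde g_j\}_j$ is jointly Gaussian. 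Observe that $\tilde f_j\tilde g_j^\T = T_jf_jg_j^\T U_j^\T$ and $\mathds{E}[\tilde f_j\tilde g_j^\T] = T_j\mathds{E}[f_jg_j^\T]U_j^\T$, so the claimed limit \eqref{eq:converge_expected_cor_sum} is exactly the conclusion \eqref{eq:converge_expected_sum} of Theorem \ref{thm:converge_expected} applied to the pair $(\tilde f_j),(\tilde g_j)$.

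The only thing left to check is that $\tilde f$ and $\tilde g$ inherit an autocorrelation bound of the form \eqref{eq:converge_expected_autocorr_bound1}. Using submultiplicativity of the spectral norm together with the matrix bounds \eqref{eq:converge_expected_cor_mat_bound} and the autocorrelation bounds \eqref{eq:converge_expected_autocorr_bound2},
\begin{align}
\|\mathds{E}[\tilde f_j\tilde f_i^\T]\| = \|T_j\mathds{E}[f_jf_i^\T]T_i^\T\| \leq \|T_j\|\,\|\mathds{E}[f_jf_i^\T]\|\,\|T_i\| \leq \frac{\eta_T^2\eta}{\epsilon^{|i-j|}},\nonumber
\end{align}
and similarly $\|\mathds{E}[\tilde g_j\tilde g_i^\T]\|\leq \eta_U^2\eta/\epsilon^{|i-j|}$ and $\|\mathds{E}[\tilde f_j\tilde g_i^\T]\|\leq \eta_T\eta_U\eta/\epsilon^{|i-j|}$. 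Setting $\tilde\eta = \max\{\eta_T,\eta_U\}^2\eta$, which is finite and strictly positive, all three estimates hold with the single constant $\tilde\eta$ and the unchanged base $\epsilon>1$, so the hypotheses of Theorem \ref{thm:converge_expected} are satisfied for $(\tilde f_j),(\tilde g_j)$ with constants $\tilde\eta,\epsilon$.

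Applying Theorem \ref{thm:converge_expected} then yields $\plim{i}\frac{1}{i}\sum_{j=1}^i\tilde f_j\tilde g_j^\T-\mathds{E}[\tilde f_j\tilde g_j^\T]=0_{s^\prime\times t^\prime}$, which upon substituting back is precisely \eqref{eq:converge_expected_cor_sum}. I do not expect a substantive obstacle here; the two points that merit a sentence of care are (i) that joint Gaussianity of the original family is preserved under the deterministic maps $T_j,U_j$, so that the fourth-moment identity invoked inside the proof of Theorem \ref{thm:converge_expected} remains applicable to $\tilde f,\tilde g$, and (ii) that the argument never requires $T_j\Sigma_{f,j}T_j^\T$ or $U_j\Sigma_{g,j}U_j^\T$ to be full rank, so possibly rank-deficient transforms $T_j,U_j$ are harmless.
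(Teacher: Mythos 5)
Your proposal is correct and mirrors the paper's own argument: define the transformed sequences $T_jf_j$ and $U_jg_j$, verify via submultiplicativity of the spectral norm that the autocorrelation bounds carry over with constant $\max\{\eta_T^2,\eta_U^2,\eta_T\eta_U\}\eta$ and the same base $\epsilon$, and then invoke Theorem \ref{thm:converge_expected}. No substantive differences from the paper's proof.
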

\begin{proof} We prove this result by showing that the bounded linear transform generates new sequences that satisfy the conditions described in Theorem \ref{thm:converge_expected}. 
Let 
\begin{align}
    f_i^\prime=T_if_i~\forall i~\text{ and } g_i^\prime=U_ig_i~\forall i
\end{align}
then $f_i^\prime\sim\mathcal{N}(0_{s^\prime\times1},T_i\Sigma_{f,i}T_i^\T)$ and $g_i^\prime\sim\mathcal{N}(0_{t^\prime\times1},U_i\Sigma_{g,i}U_i^\T)$. 
Furthermore, we have that 
\begin{align}
    \|\mathds{E}[f_j^\prime f_{i}^{\prime\T}]\|%&=\|T_j\mathds{E}[f_jf_{j+i}^\T]T_{j+i}^\T]\|\leq\nonumber\\&
    \leq\|T_j\|\|T_{i}\|\|\mathds{E}[f_jf_{i}^\T]\|<\textstyle\frac{\eta_T^2\eta}{\epsilon^{|i-j|}}
\end{align}
where the first inequality comes from the submultiplicative property of the spectral norm and the second from applying \eqref{eq:converge_expected_autocorr_bound2} and 
\eqref{eq:converge_expected_cor_mat_bound}.
Similarly,
\begin{align}
    \textstyle\|\mathds{E}[g_j^\prime g_{i}^{\prime\T}]\|<\frac{\eta_U^2\eta}{\epsilon^{|i-j|}}
    \quad\text{and}\quad
    \|\mathds{E}[f_j^\prime g_{i}^{\prime\T}]\|<\frac{\eta_U\eta_T\eta}{\epsilon^{|i-j|}}.
\end{align}
Let $\eta^\prime=\max\{\eta_U^2\eta,\eta_T^2\eta,\eta_U\eta_T\eta\}$ and $\epsilon^\prime=\epsilon$ then
\begin{align}
    \textstyle\|\mathds{E}[f_j^\prime f_{i}^{\prime\T}]\|,~
    \|\mathds{E}[g_j^\prime g_{i}^{\prime\T}]\|,~
    \|\mathds{E}[f_j^\prime g_{i}^{\prime\T}]\|<\frac{\eta^\prime}{\epsilon^{\prime |i-j|}}
\end{align}
which satisfies the conditions for using Theorem \ref{thm:converge_expected} which implies that 
\begin{align}
    \plim{i}&\quad\textstyle\frac{1}{i}\sum_{j=1}^i f_j^\prime g_j^{\prime\T}-\mathds{E}[f_j^\prime g_j^{\prime\T}]=0_{s^\prime\times t^\prime},
\end{align}
which completes the proof since
\begin{align}
    f_j^\prime g_j^{\prime\T}-\mathds{E}[f_j^\prime g_j^{\prime\T}]=T_jf_jg_j^\T U_j^\T-T_j\mathds{E}[f_jg_j^\T] U_j^\T.
\end{align}
\end{proof}

To use Theorem \ref{thm:converge_expected} and Corollary \ref{cor:converge_expected_cor}, we provide sufficient conditions for a Gaussian sequence to satisfy conditions \eqref{eq:converge_expected_autocorr_bound1} and \eqref{eq:converge_expected_autocorr_bound2}.
\begin{theorem}\label{thm:bounded_expectations}
Consider the Gaussian process 
\begin{align}
    a_{i+1}=M_ia_i+b_i\label{eq:bounded_expectations_update}
\end{align}
where $a_0=0_{s\times1}$ and $b_i$ are independent gaussian distributed random vaiables such that $b_i\sim\mathcal{N}(0_{s\times1},\Sigma_{b,i})$. 
If $\exists \epsilon_1,\epsilon_2$ such that $\|M_i\|<\epsilon_1<1$ and $\|\Sigma_{b,i}\|<\epsilon_2<\infty~\forall~i$ then 
\begin{align}
    \left\|\mathds{E}\left[a_ja_{i}^\T\right]\right\|<\textstyle\frac{\eta}{\epsilon^{|i-j]}}\label{eq:bounded_expectations_auto_claim},
\end{align}
where $\eta=\frac{\epsilon_2}{1-\epsilon_1^2}$ and $\epsilon=\frac{1}{\epsilon_1}$.
\end{theorem}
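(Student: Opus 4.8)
The plan is to unroll the linear recursion, evaluate the resulting cross-covariance exactly using independence of the $b_i$, and then collapse a geometric series using $\|M_i\| < \epsilon_1 < 1$. First I would reduce to the case $j \le i$: since $\|\mathds{E}[a_j a_i^\T]\| = \|\mathds{E}[a_i a_j^\T]^\T\| = \|\mathds{E}[a_i a_j^\T]\|$, the bound for $j > i$ follows from the bound for $j \le i$, and the case $j=0$ is immediate because $a_0 = 0_{s\times1}$, so it suffices to treat $1 \le j \le i$. Introducing the shorthand $M_{(n,m)} = M_n \cdots M_m$ for $n \ge m$ and $M_{(n,n+1)} = I_s$, a straightforward induction on \eqref{eq:bounded_expectations_update} gives $a_i = \sum_{k=0}^{i-1} M_{(i-1,k+1)} b_k$.

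Next, because the $b_k$ are independent and zero-mean with $\mathds{E}[b_k b_l^\T] = \Sigma_{b,k}$ when $k=l$ and $0_{s}$ otherwise, and since $j \le i$, all cross terms with mismatched indices vanish and the overlap runs up to $j-1$, so
\[
\mathds{E}[a_j a_i^\T] = \textstyle\sum_{k=0}^{j-1} M_{(j-1,k+1)}\,\Sigma_{b,k}\,M_{(i-1,k+1)}^\T .
\]
Applying the triangle inequality and submultiplicativity of the spectral norm, together with $\|M_{(j-1,k+1)}\| \le \epsilon_1^{\,j-1-k}$, $\|M_{(i-1,k+1)}\| \le \epsilon_1^{\,i-1-k}$, and $\|\Sigma_{b,k}\| < \epsilon_2$, yields
\[
\|\mathds{E}[a_j a_i^\T]\| < \epsilon_2\,\epsilon_1^{\,i+j-2}\textstyle\sum_{k=0}^{j-1}\epsilon_1^{-2k}.
\]
Since $\epsilon_1^{-2} > 1$, the finite geometric sum satisfies $\sum_{k=0}^{j-1}\epsilon_1^{-2k} = (\epsilon_1^{-2j}-1)/(\epsilon_1^{-2}-1) < \epsilon_1^{\,2-2j}/(1-\epsilon_1^2)$; substituting and simplifying the powers of $\epsilon_1$ gives $\|\mathds{E}[a_j a_i^\T]\| < \frac{\epsilon_2}{1-\epsilon_1^2}\,\epsilon_1^{\,i-j} = \eta/\epsilon^{\,|i-j|}$ by the definitions $\eta = \epsilon_2/(1-\epsilon_1^2)$, $\epsilon = 1/\epsilon_1$, which is exactly \eqref{eq:bounded_expectations_auto_claim}.

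There is no real obstacle here — the argument is essentially bookkeeping — but the points requiring care are the reduction to $j \le i$ via the transpose identity (and separating off $j=0$), getting the exponent arithmetic right when the geometric series is collapsed, and keeping the inequality strict, which is legitimate because $\|M_i\| < \epsilon_1$ and $\|\Sigma_{b,i}\| < \epsilon_2$ are strict and the sum is nonempty for $j \ge 1$. As an alternative one could argue by induction on $i-j$: for $i > j$ one has $\mathds{E}[a_j a_i^\T] = \mathds{E}[a_j a_{i-1}^\T]\,M_{i-1}^\T$ since $b_{i-1}$ is independent of $a_j$, so $\|\mathds{E}[a_j a_i^\T]\| \le \epsilon_1\,\|\mathds{E}[a_j a_{i-1}^\T]\|$, with base case $\|\mathds{E}[a_j a_j^\T]\| \le \epsilon_2\sum_{k=0}^{j-1}\epsilon_1^{2k} < \eta$ obtained from the Lyapunov recursion $\mathds{E}[a_j a_j^\T] = M_{j-1}\mathds{E}[a_{j-1}a_{j-1}^\T]M_{j-1}^\T + \Sigma_{b,j-1}$; iterating gives the same bound.
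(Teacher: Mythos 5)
Your proof is correct and takes essentially the same route as the paper: unroll the recursion, use independence of the $b_k$ to eliminate cross terms, and bound the resulting geometric series. The paper merely organizes it as a lag-zero bound on $\mathds{E}[a_ja_j^\T]$ followed by propagation through $M_j^\T\cdots M_{j+i-1}^\T$, which is exactly the alternative induction you sketch at the end.
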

\begin{proof}
Consider the  LHS of \eqref{eq:bounded_expectations_auto_claim} when $i=j$. 
We can expand $a_ja_j^T$ using \eqref{eq:bounded_expectations_update} iterativley to get 
% \begin{align}
%     \mathds{E}[a_ja_j^\T]=M_{j-1}\mathds{E}[a_{j-1}a_{j-1}]^\T M_{j-1}^\T+\Sigma_{b,j-1}.\label{eq:bounded_expectations_cov_update}
% \end{align}
% Using \eqref{eq:bounded_expectations_cov_update} iteratively $a_0=0_{s\times 1}$ we get 
\begin{align}
    &\|\mathds{E}[a_ja_j^\T]\|=\nonumber\\
    &~=\left\|\textstyle\sum_{i=1}^{j} M_{j-1}\hdots M_{j-i+1}\Sigma_{b,j-i} M_{j-i+1}^\T\hdots M_{j-1}^\T\right\|.
\end{align}
We upper bound this norm as follows
\begin{align}
    \|\mathds{E}[a_ja_j^\T]\|&\leq\textstyle\sum_{i=1}^{j}\|M_{j-1}\|\hdots\|M_{j-i+1}\|\times\nonumber\\
    &\qquad\times\|\Sigma_{b,j-i}\|~\|M_{j-i+1}^\T\|\hdots \|M_{j-1}^\T\|\nonumber\\
    &<\textstyle\sum_{i=1}^{i} \epsilon _2\epsilon_1^{2(j-1)}\leq\frac{\epsilon_2}{1-\epsilon_1^2}\label{eq:bounded_expectations_cov_proof},
\end{align}
where the first inequality comes from applying triangle inequality and the sub-multiplicative property of the spectral norm and the second inequality comes from applying the bounds on $\|M_i\|$ and $\|\Sigma_{b,i}\|$ and then bounding the resulting geometric series.
% \begin{align}
%     \|\mathds{E}[a_ja_j^\T]\|<\sum_{i=1}^{i} \epsilon _2\epsilon_1^{2(j-1)}\leq\frac{\epsilon_2}{1-\epsilon_1^2}.\label{eq:bounded_expectations_cov_proof}
% \end{align}

We now focus on \eqref{eq:bounded_expectations_auto_claim} for when $i\neq j$.
Consider the following which has been expanded using \eqref{eq:bounded_expectations_update}
\begin{align}
    &\|\mathds{E}[a_{j+i}a_j^\T]\|=\|\mathds{E}[a_ja_{j+i}^\T]\|=\|\mathds{E}[a_j(M_{j+i-1}\hdots M_{j}a_j+\nonumber\\
    &\qquad+\textstyle\sum_{k=1}^{i}M_{j+i-1}\hdots M_{j+i-k+1}b_{j+i-k})^\T]\|.
\end{align}
Since $\mathds{E}[a_jb_{j+i-k}]=0~\forall~k\leq i$, this simplifies to
\begin{align}
    \|\mathds{E}[a_{j+i}a_j^\T]\|&=\|\mathds{E}[a_ja_{j+i}^\T]\|\nonumber\\
    &=\|\mathds{E}[a_ja_j^\T]M_{j}^\T\hdots M_{j+i-1}^\T\|\textstyle<
    % \frac{\epsilon_2\epsilon_1^i}{1-\epsilon_1^2}=
    \frac{\eta}{\epsilon^i},
\end{align}
where the inequality comes from \eqref{eq:bounded_expectations_cov_proof} and $\|M_i\|<\epsilon_1$.
\end{proof}

Next, we show that, when $\alpha$ being equal to 0, the full system state satisfies the conditions of Theorem \ref{thm:bounded_expectations}.
\begin{theorem}\label{thm:alpha_zero_bounded_state}
Consider an attacked LTV system satisfying the dynamics in \eqref{eq:state_update}-\eqref{eq:observer_error_update_attack} and the attack model in \eqref{eq:attack_definition}-\eqref{eq:false_state_update}. 
Assume the attack scaling factor $\alpha$ is equal to 0. 
% Furthermore, consider an arbitrary matrix $U\in\mathbb{R}^{4p\times4p}$. 
% Suppose $\exists~\eta_U\in\mathbb{R}$ such that $\|U\|<\eta_U$.
Then $\exists~\eta>0$ and $\epsilon>1$ such that  
\begin{align}
    \left\|\mathds{E}\left[\begin{bmatrix}x_{n}\\\bar{\delta}_{n}\\\hat{\delta}_{n}\\\xi_n\end{bmatrix}
    \begin{bmatrix}x_{n+i}\\\bar{\delta}_{n+i}\\\hat{\delta}_{n+i}\\\xi_{n+i}\end{bmatrix}^\T \right]\right\|<\textstyle\frac{\eta}{\epsilon^i}.\label{eq:alpha_zero_bounded_state_claim}
\end{align}
\end{theorem}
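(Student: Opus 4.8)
The plan is to realize the stacked vector $a_n := \big[\,x_n^\T~\bar{\delta}_n^\T~\delAttack_n^\T~\xi_n^\T\,\big]^\T \in \mathbb{R}^{4p}$ as the state of a single recursion $a_{n+1} = M_n a_n + b_n$ with $a_0 = 0_{4p\times 1}$ and independent Gaussian driving noise, and then to invoke Theorem~\ref{thm:bounded_expectations}. Because $\alpha = 0$ the attack reduces to $v_n = C_n\xi_n + \zeta_n$; substituting $\hat{x}_n = x_n + \delta_n = x_n + \bar{\delta}_n + \delAttack_n$ into \eqref{eq:state_update} and this $v_n$ into \eqref{eq:observer_error_update_attack}, and appending \eqref{eq:observer_error_update_normal} and \eqref{eq:false_state_update}, gives
\begin{align}
    M_n &= \begin{bmatrix}\AK_n & B_nK_n & B_nK_n & 0_p\\ 0_p & \AL_n & 0_p & 0_p\\ 0_p & 0_p & \AL_n & -L_nC_n\\ 0_p & 0_p & 0_p & \AK_n\end{bmatrix}, \nonumber\\
    b_n &= \begin{bmatrix}w_n+B_ne_n\\ -w_n-L_nz_n\\ -L_n\zeta_n\\ \omega_n\end{bmatrix}. \nonumber
\end{align}
The $b_n$ are mutually independent (fixed linear images of the independent white sequences $w_n,z_n,e_n,\zeta_n,\omega_n$), zero-mean Gaussian, and by Assumptions~\ref{asmp:bound1}, \ref{asmp:obs_bounds} and the attack-model assumption their covariances satisfy $\sup_n\|\mathds{E}[b_nb_n^\T]\| =: \epsilon_2 < \infty$.

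Theorem~\ref{thm:bounded_expectations} is not directly applicable because it requires $\|M_n\| < 1$: although each diagonal block of $M_n$ has norm at most $\rho := \max\{\eta_{\AK},\eta_{\AL}\} < 1$, the coupling blocks $B_nK_n$ and $-L_nC_n$ (uniformly norm-bounded under the standing assumptions, by $\eta_B\sup_n\|K_n\|$ and $\eta_L\eta_C$ respectively) may push $\|M_n\|$ past $1$. I would circumvent this with a fixed change of coordinates that exploits the block upper-triangular structure of $M_n$ --- a structure available precisely because $\alpha = 0$ (for $\alpha\neq 0$ the attack reinjects $x_n$ into \eqref{eq:observer_error_update_attack}, coupling $x$ and $\delAttack$ in both directions and destroying triangularity). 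Put $\tilde a_n = T a_n$ with $T = \mathrm{diag}(\mu^3 I_p,\mu^2 I_p,\mu I_p,I_p)$ for a small $\mu\in(0,1)$ still to be fixed. Conjugation leaves the diagonal blocks of $M_n$ unchanged and scales the $(i,j)$ superdiagonal block by $\mu^{\,j-i}$, so $TM_nT^{-1}$ equals a block-diagonal matrix of norm $\le\rho$ plus a strictly upper-triangular matrix of norm $O(\mu)$ uniformly in $n$; choosing $\mu$ small enough yields $\|TM_nT^{-1}\| \le \epsilon_1 < 1$ for every $n$. Moreover $\tilde a_{n+1} = (TM_nT^{-1})\tilde a_n + Tb_n$, $\tilde a_0 = 0_{4p\times 1}$, and $\sup_n\|\mathds{E}[(Tb_n)(Tb_n)^\T]\| \le \|T\|^2\epsilon_2 < \infty$.

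Now Theorem~\ref{thm:bounded_expectations} applies to $\tilde a_n$, producing $\eta' > 0$ and $\epsilon > 1$ with $\|\mathds{E}[\tilde a_n\tilde a_{n+i}^\T]\| < \eta'/\epsilon^{\,i}$ for all $n,i$; since $\mathds{E}[a_na_{n+i}^\T] = T^{-1}\mathds{E}[\tilde a_n\tilde a_{n+i}^\T](T^{-1})^\T$, inequality \eqref{eq:alpha_zero_bounded_state_claim} follows with $\eta = \|T^{-1}\|^2\eta'$ and the same $\epsilon$. The step I expect to demand the most care is the uniform contractivity of $TM_nT^{-1}$: one must confirm that the coupling blocks are genuinely uniformly bounded (so the $O(\mu)$ estimate on the off-diagonal part is valid) and that a single $\mu$ works for all $n$ simultaneously. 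A more computational alternative that sidesteps the coordinate change is to bootstrap through the triangular structure --- apply Theorem~\ref{thm:bounded_expectations} directly to $\xi_n$ and $\bar{\delta}_n$, then bound the autocorrelation of $\delAttack_n$ and then of $x_n$, together with all cross-correlations among the four blocks, by unrolling \eqref{eq:observer_error_update_attack} and \eqref{eq:state_update} into geometric sums of the kind used in the appendix estimates.
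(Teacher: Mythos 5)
Your proposal follows the same route as the paper: stack $a_n=[\,x_n^\T~\bar{\delta}_n^\T~\delAttack_n^\T~\xi_n^\T\,]^\T$, write $a_{n+1}=M_na_n+b_n$ with exactly the $M_n$ you display (the paper writes the noise as $T_n[e_n^\T~w_n^\T~z_n^\T~\zeta_n^\T~\omega_n^\T]^\T$, which is your $b_n$), and invoke Theorem~\ref{thm:bounded_expectations}. Where you genuinely diverge is the key step $\|M_n\|<1$: the paper simply asserts $\|M_n\|<\max\{\eta_{\bar A},\eta_{\underline A}\}<1$ on the grounds that the eigenvalues of a block upper-triangular matrix are those of its diagonal blocks --- an argument that controls the spectral radius but not the $2$-norm, which the nonzero coupling blocks $B_nK_n$ and $-L_nC_n$ can push above $1$. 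Your diagonal similarity $T=\mathrm{diag}(\mu^3I_p,\mu^2I_p,\mu I_p,I_p)$, which damps every superdiagonal block by at least a factor $\mu$ while leaving the diagonal blocks alone, then transferring the autocorrelation bound back through $\mathds{E}[a_na_{n+i}^\T]=T^{-1}\mathds{E}[\tilde a_n\tilde a_{n+i}^\T]T^{-\T}$ (at the cost of replacing $\eta'$ by $\|T^{-1}\|^2\eta'$ and keeping the same $\epsilon$), is a correct repair of this step and makes the proof more careful than the paper's. The one loose end you flag yourself --- uniform boundedness of the coupling blocks --- should not be phrased through $\sup_n\|K_n\|$, since no bound on $K_n$ alone is assumed; instead use $B_nK_n=\bar A_n-A_n=\bar A_n-\underline A_n+L_nC_n$, so $\|B_nK_n\|\le\eta_{\bar A}+\eta_{\underline A}+\eta_L\eta_C$ and $\|L_nC_n\|\le\eta_L\eta_C$ by Assumptions~\ref{asmp:bound1} and~\ref{asmp:obs_bounds}; with these uniform bounds a single $\mu$ works for all $n$ and your argument closes. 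Your remark that the triangular structure (and hence the whole construction) hinges on $\alpha=0$ matches the paper's use of that hypothesis.
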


\begin{proof}
We prove this result using Theorem \ref{thm:bounded_expectations}. 
First note that using \eqref{eq:state_update}-\eqref{eq:observer_error_update_attack}, \eqref{eq:attack_definition}-\eqref{eq:false_state_update}, and assuming $\alpha=0$ we can write
\begin{align}
    \begin{bmatrix}x_{n+1}\\\bar{\delta}_{n+1}\\\hat{\delta}_{n+1}\\\xi_{n+1}\end{bmatrix}
    &=M_n\begin{bmatrix}x_{n}\\\bar{\delta}_{n}\\\hat{\delta}_{n}\\\xi_n\end{bmatrix}
    +T_n\begin{bmatrix}e_n\\w_n\\z_n\\\zeta_n\\\omega_n\end{bmatrix}
\end{align}
where
\begin{align}
    M_n=\begin{bmatrix}\AK_n&B_n K_n&B_n K_n&0_{p}\\0_{p}&\AL_n&0_{p}&0_{p}\\0_{p}&0_{p}&\AL_n&-L_n C_n\\0_{p}&0_{p}&0_{p}&\AK_n\end{bmatrix}\\
    \intertext{and}
    T_n=
    \begin{bmatrix}
    B_n & I_{p}& 0_{p\times r} & 0_{p\times r} & 0_p\\
    0_{p\times q} & -I_{p} & -L_n & 0_{p\times r} & 0_p\\
    0_{p\times q} & 0_p & 0_{p\times r} & -L_n & 0_p\\
    0_{p\times q} & 0_p & 0_{p\times r} &0_{p\times r} & I_{p}
    \end{bmatrix}.
\end{align}
Let $\epsilon_1=\max\{\eta_{A1},\eta_{A2}\}$ then 
\begin{align}
    \left\|M_n\right\|<\epsilon_1<1
\end{align}
since the eigenvalues of upper block diagonal matrices are the set of eigenvalues of the block elements on the diagonal and $\|\AK_n\|<\eta_{A1}<1$ and $\|\AL_n\|<\eta_{A2}<1$. 
Furthermore, denote
\begin{align}
    b_n=T_n
    \begin{bmatrix}e_n^\T&w_n^\T&z_n^\T&\zeta_n^\T&\omega_n^\T\end{bmatrix}^\T,
\end{align}
then $b_n\sim\mathcal{N}(0,\Sigma_{b,n})$ where 
\begin{align}
    \Sigma_{b,n}=T_n\begin{bmatrix}
    \Sigma_e & 0_{q\times p} & 0_{q\times r} & 0_{q\times r} & 0_{q\times p}\\
    0_{p\times q} & \Sigma_{w,n} & 0_{p\times r} & 0_{p\times r} & 0_p\\
    0_{r\times q} & 0_{r\times p} & \Sigma_{z,n} & 0_r & 0_{r\times p}\\
    0_{r\times q} & 0_{r\times p} & 0_r & \Sigma_{\zeta,n} & 0_{r\times p}\\
    0_{p\times q} & 0_p & 0_{p\times r} & 0_{p\times r} & \Sigma_{\omega,n}
    \end{bmatrix}T_n^\T.
\end{align}
Since $B_n,L_n,\Sigma_e,\Sigma_{w,n},\Sigma_{z,n},\Sigma_{\zeta,n},$ and $\Sigma_{\omega,n}$ are all bounded we have that $\|\Sigma_{b,n}\|<\epsilon_2$ for some $0\leq\epsilon_2<\infty$. 
Denoting 
\begin{align}
    a_n=\begin{bmatrix}x_{n}^\T&\bar{\delta}_{n}^\T&\hat{\delta}_{n}^\T&\xi_{n}\end{bmatrix}^\T,
\end{align}
we are able to complete the proof using Theorem \ref{thm:bounded_expectations}.
% \begin{align}
%     \left\|\mathds{E}\left[
%     \begin{bmatrix}x_{n}\\\bar{\delta}_{n}\\\hat{\delta}_{n}\\\xi_{n}\end{bmatrix}
%     \begin{bmatrix}x_{n+i}\\\bar{\delta}_{n+i}\\\hat{\delta}_{n+i}\\\xi_{n+i}\end{bmatrix}^T
%     \right]\right\|<\frac{\epsilon_2\epsilon_1^i}{1-\epsilon_1^2}.\label{eq:alpha_zero_bounded_state_thm_5_bound}
% \end{align}
% % Finally, we can upper bound the LHS of \eqref{eq:alpha_zero_bounded_state_claim} using the sub-multiplicative property of the spectral norm
% % \begin{align}
% %     \left\|\mathds{E}\left[U\begin{bmatrix}x_{n}\\\bar{\delta}_{n}\\\hat{\delta}_{n}\\\xi_n\end{bmatrix}
% %     \begin{bmatrix}x_{n+i}\\\bar{\delta}_{n+i}\\\hat{\delta}_{n+i}\\\xi_{n+i}\end{bmatrix}^\T U^\T\right]\right\|\leq \frac{\eta_U^2\epsilon_2\epsilon_1^i}{1-\epsilon_1^2}.
% % \end{align}
% Allowing $\eta=\frac{\epsilon_2}{1-\epsilon_1^2}$ and $\epsilon=\frac{1}{\epsilon_1}$ completes the proof.
\end{proof}

Since the asymptotic attack power uses the inner product of $v_n$ while most other limits use outer products, we relate these limits in the following Lemma.
\begin{lm}\label{lm:mat_scal}
Consider a sequence of random vectors $(b_n)_{n=0}^\infty$ such that $b_n\in\mathbb{R}^s$.
\begin{align}
    \plim{i}~\textstyle\frac{1}{i}\sum_{n=0}^{i-1}b_nb_n^\T=0_s\\
    \intertext{if and only if}
    \plim{i}~\textstyle\frac{1}{i}\sum_{n=0}^{i-1}b_n^\T b_n=0.\label{eq:matrix_lim_eq_scalar_lim}
\end{align}
\end{lm}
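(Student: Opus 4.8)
The plan is to reduce both sides of the claimed equivalence to a statement about the single positive semidefinite random matrix $M_i=\frac{1}{i}\sum_{n=0}^{i-1}b_nb_n^\T$, and then exploit the elementary sandwich relating its trace to its spectral norm. First I would note the two structural facts: (i) $b_n^\T b_n=\mathrm{tr}(b_nb_n^\T)=\|b_n\|^2\ge 0$, so the scalar average in \eqref{eq:matrix_lim_eq_scalar_lim} is exactly $\mathrm{tr}(M_i)$; and (ii) $M_i$ is a sum of rank-one positive semidefinite matrices, hence positive semidefinite for every $i$ and every sample point.

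The key deterministic inequality is that for any positive semidefinite $M\in\mathbb{R}^{s\times s}$ with eigenvalues $\lambda_1\ge\cdots\ge\lambda_s\ge 0$ one has $\|M\|=\lambda_1\le\sum_{k=1}^s\lambda_k=\mathrm{tr}(M)\le s\lambda_1=s\|M\|$. Applying this to $M_i$ yields, pointwise,
\[
    \|M_i\|\le \mathrm{tr}(M_i)\le s\|M_i\|.
\]
From here the two implications are symmetric. Fix an arbitrary $\epsilon>0$. The left inequality gives the event inclusion $\{\|M_i\|>\epsilon\}\subseteq\{\mathrm{tr}(M_i)>\epsilon\}$, hence $\mathds{P}(\|M_i\|>\epsilon)\le\mathds{P}(\mathrm{tr}(M_i)>\epsilon)$; the right inequality gives $\{\mathrm{tr}(M_i)>\epsilon\}\subseteq\{\|M_i\|>\epsilon/s\}$, hence $\mathds{P}(\mathrm{tr}(M_i)>\epsilon)\le\mathds{P}(\|M_i\|>\epsilon/s)$. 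Since $\mathrm{tr}(M_i)\ge 0$ and $\|M_i\|\ge 0$, convergence in probability to $0$ of either quantity is equivalent to $\mathds{P}(\cdot>\epsilon)\to 0$ for every $\epsilon>0$, and $\plim{i}~M_i=0_s$ is by definition the statement $\plim{i}~\|M_i\|=0$. Thus $\plim{i}~M_i=0_s$ forces $\mathds{P}(\mathrm{tr}(M_i)>\epsilon)\le\mathds{P}(\|M_i\|>\epsilon/s)\to 0$, i.e. $\plim{i}~\mathrm{tr}(M_i)=0$; and conversely $\plim{i}~\mathrm{tr}(M_i)=0$ forces $\mathds{P}(\|M_i\|>\epsilon)\le\mathds{P}(\mathrm{tr}(M_i)>\epsilon)\to 0$, i.e. $\plim{i}~M_i=0_s$. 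This establishes both directions.

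I do not anticipate a genuine obstacle; the only point needing care is to keep the two-sided sandwich straight so that both directions drop out of the same chain of event inclusions, and to state precisely that ``$\plim{i}$ of a matrix equals $0_s$'' means convergence in probability of its $2$-norm (equivalently, entrywise, via $\max_{k,l}|M_{kl}|\le\|M\|\le s\max_{k,l}|M_{kl}|$ for fixed finite dimension) to zero. If one wishes to be fully self-contained one can also note $\mathrm{tr}(M_i)=\sum_k (M_i)_{kk}$ and read off $\plim{i}~\mathrm{tr}(M_i)=0$ directly from entrywise convergence, but this adds no new idea.
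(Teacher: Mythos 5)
Your proof is correct and follows essentially the same route as the paper: identify $\frac{1}{i}\sum_{n} b_n^\T b_n$ with $\mathrm{tr}(M_i)$ and compare trace with spectral norm of the positive semidefinite average, turning the deterministic inequalities into inclusions of tail events. The paper obtains the bound $\|M_i\|\le\mathrm{tr}(M_i)$ via the triangle inequality over the rank-one summands and handles the converse by entrywise convergence of the diagonal, whereas you use the two-sided eigenvalue sandwich $\|M_i\|\le\mathrm{tr}(M_i)\le s\|M_i\|$ for both directions; this is only a cosmetic difference.
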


\begin{proof}
Assume that the RHS of \eqref{eq:matrix_lim_eq_scalar_lim} holds. 
Note that 
\begin{align}
    \left\|\textstyle\frac{1}{i}\sum_{n=0}^{i-1}\displaystyle b_n b_n^\T\right\|\leq\textstyle\frac{1}{i}\sum_{n=0}^{i-1}\displaystyle \|b_n b_n^\T\|=\textstyle\frac{1}{i}\sum_{n=0}^{i-1}b_n^\T b_n.
\end{align}
where the inequality comes from triangle inequality and the equality comes from the matrix $b_nb_n^\T$ being singular.
This implies that  
\begin{align}
    \mathds{P}\left(\left|\textstyle\frac{1}{i}\sum_{n=0}^{i-1}\displaystyle b_n^\T b_n\right|>\epsilon\right)\geq\mathds{P}\left(\left\|\textstyle\frac{1}{i}\sum_{n=0}^{i-1}b_n b_n^\T\right\|>\epsilon\right).\label{eq:mat_scal_right_ineq}
\end{align} 
Since the LHS of \eqref{eq:mat_scal_right_ineq} converges to zero as $i\to \infty$ as a result of our assumption, the RHS must do so as well which directly implies the LHS of \eqref{eq:matrix_lim_eq_scalar_lim} holds.

Now assume that the LHS of \eqref{eq:matrix_lim_eq_scalar_lim} holds. 
Then since 
\begin{align}
    \textstyle\frac{1}{i}\sum_{n=0}^{i-1}b_n^\T b_n=\text{tr}\left(\frac{1}{i}\sum_{n=0}^{i-1}b_n b_n^\T\right),
\end{align}
and for the matrix to converge it must also converge element-wise, we have that the RHS of \eqref{eq:matrix_lim_eq_scalar_lim} also holds.
\end{proof}

Next, we show that if conditions such as \eqref{eq:ltv_covariance_test} do not hold, linear transforms of the limit also do not converge to zero given the conditions in the following lemma hold.
\begin{lm}\label{lm:nonconv}
Consider a family of matrices $R_n\in\mathbb{R}^{t\times s}$ with full column rank. 
Assume there exists $\eta\in\mathbb{R}$ such that $0<\eta\leq\lambda_n$, where $\lambda_n$ is the smallest eigenvalue of $R_n^TR_n$.
Furthermore, consider a sequence of random vectors $f_n\sim\mathcal{N}(0_{s\times 1},\Sigma_f)$ such that $\Sigma_{f,n}$ is positive semi-definite.
If  
\begin{align}
    \textstyle\sum_{i=1}^\infty\|\mathds{E}[f_nf_{n+i}^\T]\|<\infty~\forall n\label{eq:nonconv_original_claim1}
\end{align}
and
\begin{align}
    \plim{i}~\textstyle\frac{1}{i}\sum_{n=0}^{i-1} f_nf_n^\T \neq 0_s\label{eq:nonconv_original_claim2},
\end{align}
then
\begin{align}
    \plim{i}~\textstyle\frac{1}{i}\sum_{n=0}^{i-1} R_nf_nf_n^\T R_n^\T\neq 0_t\label{eq:nonconv_transform_claim}.
\end{align}
\end{lm}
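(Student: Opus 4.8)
The plan is to prove the contrapositive: I will assume that $\plim{i}\frac{1}{i}\sum_{n=0}^{i-1} R_nf_nf_n^\T R_n^\T = 0_t$ and deduce $\plim{i}\frac{1}{i}\sum_{n=0}^{i-1} f_nf_n^\T = 0_s$, which contradicts \eqref{eq:nonconv_original_claim2}. The guiding observation is that every matrix in sight is positive semidefinite (the $f_nf_n^\T$ are rank-one and PSD, and conjugation by $R_n$ preserves this), so the spectral norm of a partial sum is sandwiched between its trace and a dimension-dependent multiple of it; the eigenvalue hypothesis $0<\eta\le\lambda_n$ then lets one pass from the transformed partial sums back to the original ones by a single \emph{deterministic} (per-realization) inequality, after which convergence in probability is inherited for free.

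The first step is a per-index trace bound. Using $\|f_n\|^2=\text{tr}(f_nf_n^\T)$, the cyclic property of the trace, and the Rayleigh-quotient estimate $f_n^\T R_n^\T R_n f_n\ge\lambda_n\|f_n\|^2$, I get $\text{tr}(R_nf_nf_n^\T R_n^\T)=f_n^\T R_n^\T R_n f_n\ge\eta\,\text{tr}(f_nf_n^\T)$. Averaging over $n$ and using linearity of the trace yields $\text{tr}\big(\frac{1}{i}\sum_{n=0}^{i-1}R_nf_nf_n^\T R_n^\T\big)\ge\eta\,\text{tr}\big(\frac{1}{i}\sum_{n=0}^{i-1}f_nf_n^\T\big)$. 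The second step invokes the elementary fact that for a positive semidefinite $A\in\mathbb{R}^{d\times d}$ one has $\|A\|\le\text{tr}(A)\le d\|A\|$ (the trace is the sum of the nonnegative eigenvalues, the largest of which is $\|A\|$). Since both averaged sums are sums of positive semidefinite matrices, combining this with the first step gives, for every realization and every $i$,
\begin{align*}
    \Big\|\tfrac{1}{i}\textstyle\sum_{n=0}^{i-1}f_nf_n^\T\Big\|
    &\le\text{tr}\Big(\tfrac{1}{i}\textstyle\sum_{n=0}^{i-1}f_nf_n^\T\Big)
    \le\tfrac{1}{\eta}\,\text{tr}\Big(\tfrac{1}{i}\textstyle\sum_{n=0}^{i-1}R_nf_nf_n^\T R_n^\T\Big)\\
    &\le\tfrac{t}{\eta}\,\Big\|\tfrac{1}{i}\textstyle\sum_{n=0}^{i-1}R_nf_nf_n^\T R_n^\T\Big\|.
\end{align*}

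The final step is then immediate: for any $\epsilon>0$ the above inequality gives the event inclusion $\{\|\frac{1}{i}\sum_{n=0}^{i-1}f_nf_n^\T\|>\epsilon\}\subseteq\{\|\frac{1}{i}\sum_{n=0}^{i-1}R_nf_nf_n^\T R_n^\T\|>\eta\epsilon/t\}$, hence $\mathds{P}(\|\frac{1}{i}\sum_{n=0}^{i-1}f_nf_n^\T\|>\epsilon)\le\mathds{P}(\|\frac{1}{i}\sum_{n=0}^{i-1}R_nf_nf_n^\T R_n^\T\|>\eta\epsilon/t)\to 0$ by the assumed convergence in probability. Thus $\plim{i}\frac{1}{i}\sum_{n=0}^{i-1}f_nf_n^\T=0_s$, contradicting \eqref{eq:nonconv_original_claim2}. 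I expect the only place requiring care is the second step: it is positive semidefiniteness — not merely symmetry — that makes the trace both an upper bound for the spectral norm and, up to the factor $t$, a lower bound, and it is precisely this two-sided control that lets the transformed partial sum dominate the original one pointwise. I note in passing that the summability hypothesis \eqref{eq:nonconv_original_claim1} does not enter this particular argument (it is relevant when checking the lemma's hypotheses in its applications).
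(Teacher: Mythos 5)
Your proof is correct and follows essentially the same route as the paper: argue by contradiction/contrapositive, use the Rayleigh-quotient bound $\eta f_n^\T f_n \le \lambda_n f_n^\T f_n \le f_n^\T R_n^\T R_n f_n$, and pass between the matrix averages and their scalar (trace) counterparts — the paper simply packages that last step as Lemma \ref{lm:mat_scal}, applied twice, whereas you inline it via the positive-semidefinite sandwich $\|A\|\le\mathrm{tr}(A)\le t\|A\|$, yielding one deterministic per-realization inequality. Your remark that hypothesis \eqref{eq:nonconv_original_claim1} is not used in the argument also matches the paper, whose proof likewise never invokes it.
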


\begin{proof}(Lemma \ref{lm:nonconv})
Assume that \eqref{eq:nonconv_original_claim1}-\eqref{eq:nonconv_original_claim2} holds, but 
\begin{align}
    \plim{i}~\textstyle\frac{1}{i}\sum_{n=0}^{i-1} R_nf_nf_n^\T R_n^\T= 0_t.
\end{align}
Applying Lemma \ref{lm:mat_scal} we have that 
\begin{align}
    \plim{i}~\textstyle\frac{1}{i}\sum_{n=0}^{i-1} f_n^\T R_n^\T R_nf_n= 0.
\end{align}
This implies that 
\begin{align}
    \plim{i}~\frac{\eta}{i}\textstyle\sum_{n=0}^{i-1} f_n^\T f_n=0
    % \quad\text{and}\quad\plim{i}~\textstyle\frac{1}{i}\sum_{n=0}^{i-1} f_n^\T f_n= 0
\end{align}
since $\eta f_n^\T f_n\leq\lambda_n f_n^\T f_n\leq f_n^\T R_n^\T R_n f_n$.
Since the limit is not affected by the constant $\eta$, and using Lemma \ref{lm:mat_scal}, this contradicts \eqref{eq:nonconv_original_claim2}.
Therefore, \eqref{eq:nonconv_transform_claim} must hold.
\end{proof}

\add{
\subsection{Ommited Equations}
The following equations were ommited from the proof of Theorem \ref{thm:st_noise} to improve readability.
First, note that
\begin{align}
    &\Bigg\|\mathds{E}\Bigg[\Bigg(\textstyle\sum_{\substack{1\leq j\leq n\\j\neq m^\prime}}\AK_{(n-1,n-j+1)}\omega_{n-j}\Bigg)\times\nonumber\\
    &\qquad\times\Bigg(\textstyle\sum_{\substack{1\leq k\leq n+i\\k\neq m^\prime}}\AK_{(n+i-1,n+i-k+1)}\omega_{n+i-k}\Bigg)^\T\Bigg]\Bigg\|=\nonumber\\
    &=\Bigg\|\textstyle\sum_{\substack{1\leq j\leq n\\j\neq m^\prime}}\AK_{(n-1,n-j+1)}\Sigma_{\omega,n-j}\AK_{(n+i-1,n-j+1)}^\T \Bigg\|\leq\nonumber\\
    &\leq\textstyle\sum_{j=1}^\infty \eta_{A1}^{2j-4+i}\eta_\omega=\textstyle\frac{\eta_{A2}^{i-2}\eta_\omega}{1-\eta_{A1}^2},\label{eq:autocor_bound_1}
\end{align}
where the first equality comes from evaluating the expectation, the inequality comes from distributing the norm using triangle inequality and the subadditivity of the spectral norm, bounding the individual terms, and allowing the summation to include $j=m^\prime$ and go to infinity, and the final inequality comes from evaluating the summation.
Using similar reasoning, we also have
\begin{align}
    &\left\|\mathds{E}\left[\textstyle\sum_{j=0}^{n-1}\sum_{k=0}^{n+i-1}\AL_{(n-1,j+1)} L_j\zeta_j\zeta_k^\T L_k^\T\AL_{(n+i-1,k+1)}^\T\right]\right\|=\nonumber\\
    &\quad=\left\|\textstyle\sum_{j=0}^{n-1}\AL_{(n-1,j+1)} L_j\Sigma_{\zeta_j} L_j^\T\AL_{(n+i-1,j+1)}^\T\right\|\leq\nonumber\\
    &\quad\leq\textstyle\sum_{j=0}^{n-1}\eta_{A2}^{2(n-1-j)}\eta_{A2}^i\eta_L^2\eta_{\zeta}\leq\frac{\eta_{A2}^i\eta_L^2\eta_\zeta}{1-\eta_{A2}^2}.\label{eq:autocor_bound_2}
\end{align}
and
\begin{align}
    &\Bigg\|\mathds{E}\Bigg[\Bigg(C_n\textstyle\sum_{j=1}^{n}\AL_{(n-1,n-j+1)} L_{n-j}C_{n-j}\times\nonumber\\
    &\qquad\times\textstyle\sum_{\substack{k=j+1\\k\neq m^\prime}}^n\AK_{(n-j-1,n-k+1)}\omega_{n-k}\Bigg)\times\nonumber\\
    &\qquad\times\Bigg(C_{n+i}\textstyle\sum_{j=1}^{n+i}\AL_{(n+i-1,n+i-j+1)} L_{n+i-j}C_{n+i-j}\times\nonumber\\
    &\qquad\times\textstyle\sum_{\substack{k=j+1\\k\neq m^\prime}}^{n+i}\AK_{(n+i-j-1,n+i-k+1)}\omega_{n+i-k}\Bigg)^\T\Bigg]\Bigg\|=\nonumber\\
    &=\Bigg\|\textstyle\sum_{j=1}^{n}\sum_{\ell=1}^{n+i}C_n\AL_{(n-1,n-j+1)} L_{n-j}C_{n-j}\times\nonumber\\
    &\qquad\times\textstyle\sum_{\substack{k=\max\{j+1,\ell+1\}\\k\neq m^\prime}}^n\AK_{(n-j-1,n-k+1)}\Sigma_{\omega,n-k}\times\nonumber\\
    &\qquad\times\AK_{(n+i-\ell-1,n-k+1)}^\T C_{n+i-\ell}^\T L_{n+i-\ell}^\T\times\nonumber\\
    &\qquad\times\AL_{(n+i-1,n+i-\ell+1)}^\T C_{n+i}^\T\Bigg\|\leq\nonumber\\
    &\leq\textstyle\sum_{j=1}^{n}\sum_{\ell=1}^{n}\eta_C^4\eta_L^2\eta_{A}^{\ell+j-2}\times\nonumber\\
    &\qquad\textstyle\times\sum_{\substack{k=\max\{j+1,\ell+1\}\\k\neq m^\prime}}^n\eta_{A}^{2k-j-\ell-2+i}\eta_\omega\leq\nonumber\\
    &\leq\eta_A^{i-4}\eta_C^4\eta_L^2\eta_\omega2\textstyle\sum_{j=1}^{\infty}\sum_{\ell=j}^\infty\sum_{k=\ell+1}^\infty \eta^{2k}=\textstyle\frac{2\eta_A^i\eta_C^4\eta_L^2\eta_\omega}{(1-\eta_A^2)^3}.\label{eq:autocor_bound_3}
\end{align}
}

\end{document}